\newcommand{\qedwhite}{\hfill \ensuremath{\Box}}
\tikzstyle{data} = [rectangle, rounded corners, minimum width=1.5cm, minimum height=1.5cm,text width=1.5cm,text centered, draw=black]
\tikzstyle{dataVert} = [rectangle, rounded corners, minimum width=1.5cm, minimum height=7cm,text width=1.5cm,text centered, draw=black, fill=red!20!white]
\tikzstyle{noBox} = [rectangle, rounded corners, minimum width=2cm, minimum height=1.5cm,text width=2cm, text centered]
\tikzstyle{noBoxWide} = [rectangle, rounded corners, minimum width=2cm, minimum height=1.5cm,text width=2cm, text centered]
\tikzstyle{process} = [rectangle, ultra thick, rounded corners, minimum width=2.25cm, minimum height=1.cm, text width=2.25cm, text centered, draw=black,fill=red!20!white]
\tikzstyle{output} = [rectangle, rounded corners, minimum width=1cm, minimum height=.5cm,text width=1cm,text centered, draw=black, fill=red!40!white]
\tikzstyle{arrow} = [ultra thick,->]
\tikzstyle{IOBox} = [rectangle, rounded corners, minimum width=2cm, minimum height=0.5cm,text width=1cm,text centered, draw=black]
\tikzstyle{block} = [rectangle, draw, fill=green!20, thick, text width=2em,align=center, minimum height=2.5em]
\tikzstyle{bigblock} = [rectangle, draw, fill=green!20, thick, text width=6em,align=center, minimum height=4em]
\tikzstyle{every picture}+=[font=\sffamily]
\newcommand{\sspace}{\\\vspace{-1em}}
\newcommand{\GSIO}[1]{\mathbf{GIO}{(\{#1\})}}
\newcommand{\GMIO}[1]{\mathbf{GIO}{(#1)}}
\newcommand{\GIOa}[1]{\mathbf{GIO}_\textnormal{A}{(#1)}}
\newcommand{\GIOap}[1]{\mathbf{GIO}_\textnormal{A}{(#1)}}
\newcommand{\GSIOa}[1]{\mathbf{GIO}_\textnormal{A}{(\{#1\})}}
\newcommand{\GIOr}[1]{\mathbf{GIO}_\textnormal{R}{(#1)}}
\newcommand{\GSIOr}[1]{\mathbf{GIO}_\textnormal{R}{(\{#1\})}}
\newcommand{\GIOd}[1]{\mathbf{GIO}_p{(#1)}}
\newcommand{\GIOp}[2]{\mathbf{GIO}_{#2}{(#1)}}
\newcommand{\GIOrp}[1]{\mathbf{GIO}^+_{\textnormal{R}}{(#1)}}
\newcommand{\GIOrn}[1]{\mathbf{GIO}^-_{\textnormal{R}}{(#1)}}
\newcommand{\GIOrz}[1]{\mathbf{GIO}^0_{\textnormal{R}}{(#1)}}
\newcommand{\GIOrprel}[1]{\mathbf{GIO}^+_{\textnormal{R,LP}}{(#1)}}
\newcommand{\GIOrnrel}[1]{\mathbf{GIO}^-_{\textnormal{R,LP}}{(#1)}}
\newcommand{\GIOrzrel}[1]{\mathbf{GIO}^0_{\textnormal{R,LP}}{(#1)}}
\newcommand{\GIOrpp}[1]{\mathbf{GIO}^+_{\textnormal{R}}{(#1)}}
\newcommand{\GIOK}[2]{\mathbf{GIO}^{(#2)}{(#1)}}
\newcommand{\IORTa}[1]{\mathbf{RT}\text{--}\mathbf{IO}_{\textnormal{A}}{(#1)}}
\newcommand{\IORTr}[1]{\mathbf{RT}\text{--}\mathbf{IO}_{\textnormal{R}}{(#1)}}
\newcommand{\IORTrlp}[1]{\mathbf{RT}\text{--}\mathbf{IO}_{\textnormal{R,LP}}{(#1)}}
\newcommand{\IORTad}[1]{\mathbf{RT}\text{--}\mathbf{IO}_{\textnormal{DN}}{(#1)}}
\newcommand{\IORTadk}[1]{\mathbf{RT}\text{--}\mathbf{IO}_{\textnormal{DN}}{(#1;K)}}
\newcommand{\IORTadlp}[1]{\mathbf{RT}\text{--}\mathbf{IO}_{\textnormal{DN,LP}}{(#1)}}
\newcommand{\FORT}[1]{\mathbf{RT}\text{--}\mathbf{FO}{(#1)}}
\newcommand{\dataset}{\set{\hat{X}}}
\newcommand{\feas}{\set{P}}
\newcommand{\FO}[1]{\mathbf{FO}{(#1)}}
\newcommand{\FOA}{\mathbf{FOA}{(\bc)}}
\newcommand{\proji}[1]{\pi_i{({#1})}}
\newcommand{\projistar}[1]{\pi_{i^*}{({#1})}}
\newcommand{\fproji}[1]{\psi_i{({#1})}}
\newcommand{\fprojistar}[1]{\psi_{i^*}{({#1})}}
\newcommand{\unitvec}[1]{\nu{({#1})}}
\newcommand{\cancelvec}[1]{\mu{({#1})}}
\newcommand{\nnorm}[1]{\left \| #1 \right \|_N}
\newcommand{\rhosp}{\rho(\{\bhx\})}
\newcommand{\LP}{\textnormal{LP}}
\def\tlee#1{\textbf{\color{red}#1}}
\def\dt#1{\textbf{\color{blue}#1}}
\def\rafid#1{\textbf{\color{cyan}#1}}
\def\tcyc#1{\textbf{\color{green}#1}}
    \newcommand{\comm}[1]{#1}
\begin{document}


\RUNAUTHOR{A.~Babier et al.}

\RUNTITLE{An Ensemble Learning Framework for Inverse Linear Optimization}

\TITLE{An Ensemble Learning Framework for Model Fitting and Evaluation in Inverse Linear Optimization}

\ARTICLEAUTHORS{%
        \AUTHOR{Aaron Babier, Timothy C. Y. Chan}
        \AFF{Mechanical \& Industrial Engineering, University of Toronto, Toronto, Canada, \EMAIL{\{ababier, tcychan\}@mie.utoronto.ca}} 
        \AUTHOR{Taewoo Lee}
        \AFF{Industrial Engineering, University of Houston, Houston, Texas, USA, \EMAIL{tlee6@uh.edu}}
        \AUTHOR{Rafid Mahmood}
        \AFF{Mechanical \& Industrial Engineering, University of Toronto, Toronto, Canada, \EMAIL{rafid.mahmood@mail.utoronto.ca}}
        \AUTHOR{Daria Terekhov}
        \AFF{Mechanical, Industrial, \& Aerospace Engineering, Concordia University, Montr\'{e}al, Canada, \EMAIL{daria.terekhov@concordia.ca}}
} 

\ABSTRACT{%
        We develop a generalized inverse optimization framework for fitting the cost vector of a single linear optimization problem given multiple observed decisions. 
        This setting is motivated by ensemble learning, where building consensus from base learners can yield better predictions. We unify several models in the inverse optimization literature under a single framework and derive assumption-free and exact solution methods for each one. We extend a goodness-of-fit metric previously introduced for the problem with a single observed decision to this new setting, and demonstrate several important properties. Finally, we demonstrate our framework in a novel inverse optimization-driven procedure for automated radiation therapy treatment planning. Here, the inverse optimization model leverages an ensemble of dose predictions from different machine learning models to construct a consensus treatment plan that outperforms baseline methods. The consensus plan yields better trade-offs between the competing clinical criteria used for plan evaluation.
}%


\KEYWORDS{inverse optimization; linear optimization; radiation therapy}

\maketitle

%


\section{Introduction}
\label{sec:introduction}

Motivated by the growing availability of data that represent decisions, there is an increasing interest in the use of inverse optimization to gain insight into decision-generating processes and guide subsequent decision-making. 
Inverse optimization has been used in diverse fields, for example capturing equilibrium estimates of asset returns for future portfolio optimization~\citep{ref:bertsimas_or12}, using past electricity market bids to forecast power consumption~\citep{ref:gallego_bidding16}, and estimating incentives to design future health insurance subsidies~\citep{ref:aswani_medicare19}.

Inverse optimization determines optimization model parameters to render a data set of observed decisions minimally sub-optimal for the model. The literature considers different settings that vary based on data characteristics (e.g., a single feasible decision or multiple points from different instances) or the optimization model (e.g., a linear or convex forward problem). A practitioner also chooses a sub-optimality measure to minimize, of which there exist three main variants. The first variant, known as the absolute duality gap, measures the difference between the objective values incurred by data and an imputed optimal value~\citep{ref:bertsimas_mp15, ref:zhao_cdc15, ref:gallego_bidding16, ref:esfahani_oo15}.
The second variant, known as the relative duality gap, measures the ratio instead of the absolute difference~\citep{ref:chan_or14, ref:babier_2018a, ref:chan_gof_16}.
Models using these two measures are referred to as \emph{objective space} models.
The third variant is a \emph{decision space} model that minimizes the distance between observed and optimal decisions~\citep{ref:aswani_arxiv15, ref:esfahani_oo15, ref:aswani_medicare19}.

\comm{
In this paper, we explore an ensemble inverse optimization framework using an arbitrary data set of decisions for a single forward model. Our general motivation is as follows.
Consider a single decision-making problem which we model as a linear program whose cost vector must be estimated. 
Multiple experts generate decisions for the problem. These experts may be human decision makers with their own parameter estimates or even different heuristics applied to the problem; their proposed decisions may be sub-optimal or even infeasible. 
Using these decisions, we impute a single cost vector that best represents the optimization problem attempted by the experts~\citep{ref:troutt_ms95}. We then re-solve the problem with the imputed parameter to generate an optimal decision of similar solution quality to the candidate decisions.
}

Our setting is analogous to ensemble methods in machine learning. Consider the canonical example of a random forest, which averages predictions from a set of decision trees~\citep{ref:breiman_2001}. 
Individual trees train on different subsets of data similar to how individual experts use different experiences to guide their decision making.
An ensemble method averages out the biases of the individual models, just as inverse optimization learns an objective that balances the biases of different decision makers~\citep{ref:troutt_ms95}.
Practical evidence from machine learning shows ensemble methods generally outperform base prediction models.
We similarly show in our application that ensemble inverse optimization can improve over approaches based on individual decisions.

\subsection{Motivating application}

The concrete motivating application in this paper is the automated generation of radiation therapy treatment plans in head-and-neck cancer. Intensity-modulated radiation therapy (IMRT) is one of the most widely-used cancer treatment techniques and is recommended for over 50\% of all cancer cases~\citep{delaney:2005role}. 
\comm{
Because there are multiple competing clinical goals in radiation therapy, multi-objective optimization models are used to design clinically acceptable plans. 
For complex sites such as the head-and-neck, where there may be multiple targets and critical organs, each patient requires a personalized set of objective function weights which are typically obtained via manual parameter tuning. 
This tuning requires going back and forth between a treatment planner and oncologist and may take several days to finalize. 
}
This iterative approach, combined with growing patient volumes, leads to strain in the operation of a cancer center and potential delays in treatment for patients~\citep{Das:2009aa}.


Knowledge-based planning (KBP) is a machine learning-driven planning procedure that automates the design of personalized treatments for each patient, thereby streamlining planning operations~\citep{ref:sharpe_2014}. 
\comm{
KBP contains two components: (i) a prediction model that, for a given patient, predicts an appropriate dose distribution (i.e., a function of the planning decision variables) to deliver; and (ii) an optimization model that generates a deliverable treatment plan that closely replicates the predicted dose. 
While there are various approaches to the optimization stage of KBP, an increasingly popular framework is to use inverse optimization to estimate objective function weights of the original multi-objective planning model by treating the dose predictions as `candidate decisions' obtained from an expert~\citep{ref:chan_or14, ref:babier_2018med}. Re-solving the planning problem with the imputed parameters then yields an optimal treatment plan. 
}

Modern machine learning permits a variety of dose prediction techniques that can predict different representations of the dose~\citep{ref:mcintosh_2016, ref:mahmood_2018gancer, ref:kearney_18}. 
Moreover, treatment plans are clinically evaluated on a set of competing dosimetric criteria and different prediction models lead to plans that over-fit to specific criteria. Given a plethora of prediction models where none are strictly dominating, a naive approach may take each prediction, generate a corresponding treatment plan via optimization, and then compare the plans on their dosimetric performance to identify the best plan for a patient (see Figure~\ref{fig:kbp_pipeline}a). 
However, this approach is excessively laborious and the final plan is still determined from a prediction model that may be over-fit to specific clinical criteria.

\comm{
We propose a natural alternative, which has not been previously considered, to obtain plans that better fit all of the clinical criteria. 
Analogous to an ensemble learning model combining weak predictors to form a better estimate, we harness a set of prediction models into an ensemble inverse optimization model that yields a single optimal treatment plan (see Figure~\ref{fig:kbp_pipeline}b). 
Differences in the prediction models imitate the biases of different clinical experts that may lead them to suggest different plans for a given patient, even though they all aim to satisfy the same clinical criteria. 
Our inverse optimization model is a consensus-building treatment planner whose plans compromise between predictions to satisfy aggregate metrics better than any individual model.
}

\begin{figure}[t]
    \centering
    \caption{
        \comm{An existing KBP pipeline versus our proposed ensemble approach.}
    } 
    \subcaptionbox{
        \comm{Multiple predictors and optimizers. \vspace{-1em} The \\ best plan is identified (highlighted) and used.}
    }[0.49\linewidth]{%
    \resizebox{0.49\linewidth}{!}{%
        \begin{tikzpicture}
            \node(OVH)[noBox] {\includegraphics[width=0.8\textwidth]{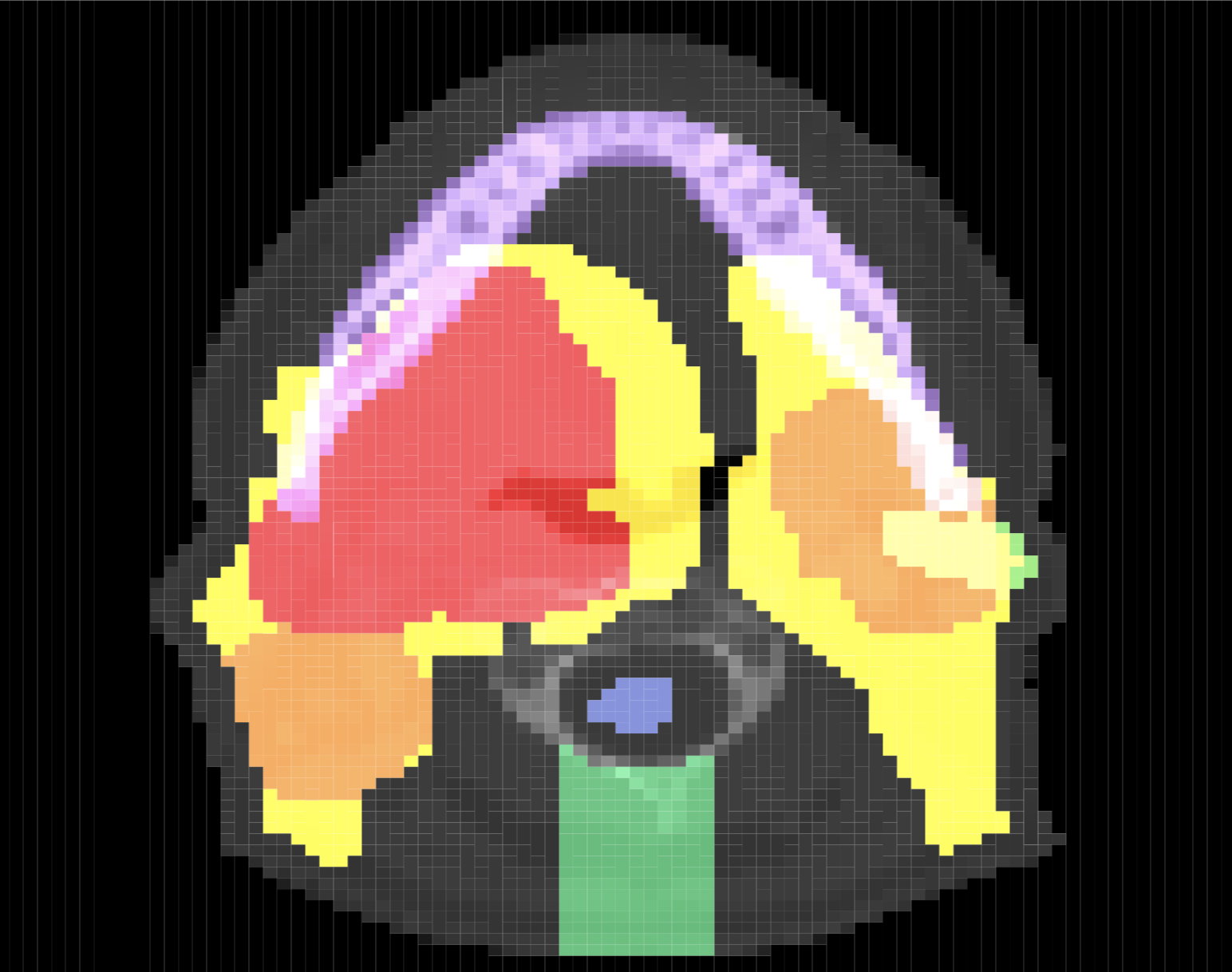}\sspace CT image};
        
            \node(KBP)[process,right of = OVH, xshift=2cm] {Prediction \sspace model B};
            \node(KBPup)[process, above of=KBP, yshift=0.5cm] {Prediction \sspace model A};
    	    \node(KBPdown)[process, below of=KBP, yshift=-0.5cm] {Prediction \sspace model C};
            \node(KBPpred)[noBox, right of = KBP, xshift=2cm] {\includegraphics[width=0.8\textwidth]{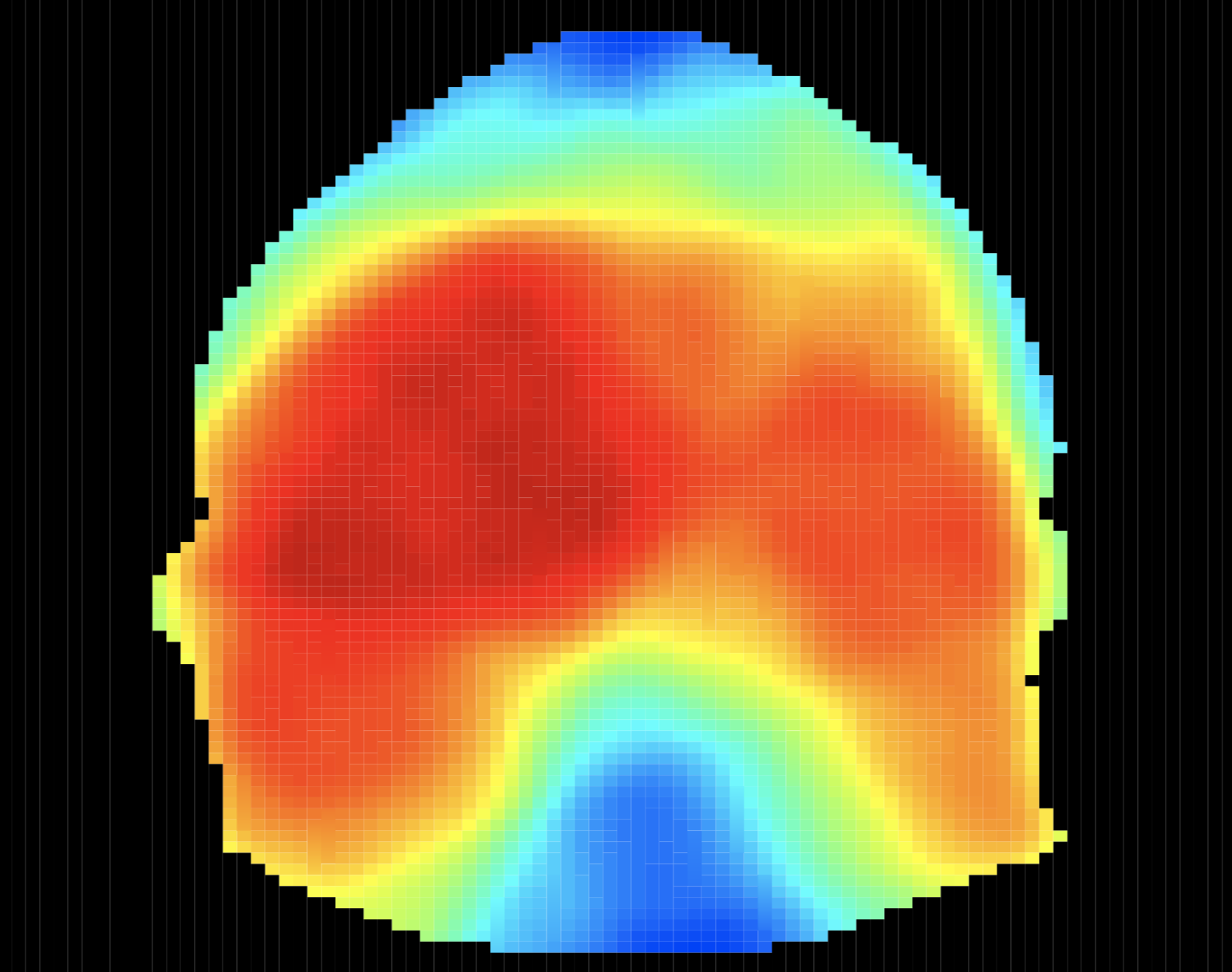}};
    
    		\node(KBPpredup)[noBox, above of = KBPpred, yshift=0.5cm] {\includegraphics[width=0.8\textwidth]{figures/dose.png}};
    		\node(KBPpreddown)[noBox, below of = KBPpred, yshift=-0.5cm] {\includegraphics[width=0.8\textwidth]{figures/dose.png}}; 
    		\node[noBox, below of=KBPpreddown, yshift=-0.25cm] {Multiple \sspace predictions};
    		
    		\node(AP)[process,right of = KBPpred, xshift=2cm] {Inverse \sspace optimization};
    		\node(APup)[process, above of= AP, yshift=0.5cm] {Inverse \sspace optimization};
    		\node(APdown)[process, below of = AP, yshift=-0.5cm] {Inverse \sspace optimization};
    		
            \node(plan)[noBox, right of = AP, xshift=2cm, fill=yellow!80!white] {\includegraphics[width=0.8\textwidth]{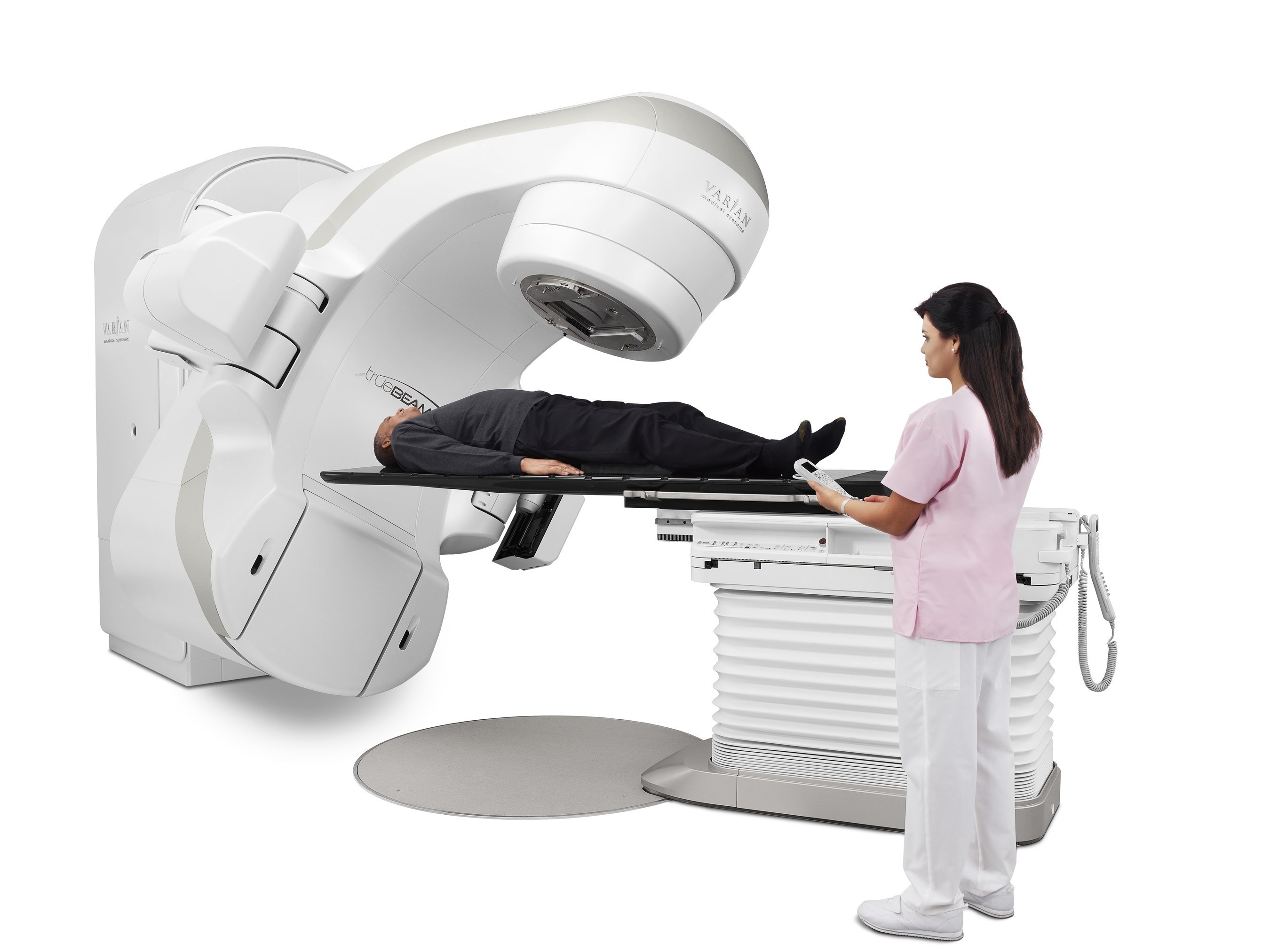}};
            \node(planup)[noBox, above of = plan, yshift=0.5cm] {\includegraphics[width=0.8\textwidth]{figures/LinacWithPpl.jpg}};
            \node(plandown)[noBox, below of = plan, yshift=-0.5cm] {\includegraphics[width=0.8\textwidth]{figures/LinacWithPpl.jpg}};		
    		\node[noBox, below of=plandown, yshift=-0.25cm] {Multiple \sspace plans};

            \draw [arrow] (OVH) -- (KBPup);
            \draw [arrow] (OVH) -- (KBP);
            \draw [arrow] (OVH) -- (KBPdown);
            \draw [arrow] (KBPup) |- (KBPpredup);
            \draw [arrow] (KBP) -- (KBPpred);
            \draw [arrow] (KBPdown) |- (KBPpreddown);
            \draw [arrow] (KBPpredup) -- (APup);
            \draw [arrow] (KBPpreddown) -- (APdown);
            \draw [arrow] (APup) -- (planup);
            \draw [arrow] (APdown) -- (plandown);
            \draw [arrow] (KBPpred) -- (AP);
            \draw [arrow] (AP) |- (plan);
        \end{tikzpicture}
    }
    }
    \subcaptionbox{
        \comm{\vspace{-1em}Multiple predictors are ensembled into one optimization to produce a single plan.}
    }[0.49\linewidth]{%
    \resizebox{0.49\linewidth}{!}{%
        \begin{tikzpicture}
            \node(OVH)[noBox] {\includegraphics[width=0.8\textwidth]{figures/ct.png}\sspace CT image};
        
            \node(KBP)[process,right of = OVH, xshift=2cm] {Prediction \sspace model B};
            \node(KBPup)[process, above of=KBP, yshift=0.5cm] {Prediction \sspace model A};
    	    \node(KBPdown)[process, below of=KBP, yshift=-0.5cm] {Prediction \sspace model C};
            \node(KBPpred)[noBox, right of = KBP, xshift=2cm] {\includegraphics[width=0.8\textwidth]{figures/dose.png}};
    
    		\node(KBPpredup)[noBox, above of = KBPpred, yshift=0.5cm] {\includegraphics[width=0.8\textwidth]{figures/dose.png}};
    		\node(KBPpreddown)[noBox, below of = KBPpred, yshift=-0.5cm] {\includegraphics[width=0.8\textwidth]{figures/dose.png}}; 
    		\node[noBox, below of=KBPpreddown, yshift=-0.25cm] {Multiple \sspace predictions};
    		
    		\node(AP)[process,right of = KBPpred, xshift=2cm] {Ensemble \sspace inverse \sspace optimization};
    		
            \node(plan)[noBox, right of = AP, xshift=2cm] {\includegraphics[width=0.8\textwidth]{figures/LinacWithPpl.jpg}\sspace Ensemble \sspace plan};
    
            \draw [arrow] (OVH) -- (KBPup);
            \draw [arrow] (OVH) -- (KBP);
            \draw [arrow] (OVH) -- (KBPdown);
            \draw [arrow] (KBPup) |- (KBPpredup);
            \draw [arrow] (KBP) -- (KBPpred);
            \draw [arrow] (KBPdown) |- (KBPpreddown);
            \draw [arrow] (KBPpredup) -- (AP);
            \draw [arrow] (KBPpreddown) -- (AP);
            \draw [arrow] (KBPpred) -- (AP);
            \draw [arrow] (AP) |- (plan);
        \end{tikzpicture}
    }
    }
    \label{fig:kbp_pipeline}
\end{figure}

\subsection{Contributions}

Methodologically, we extend the generalized inverse optimization framework of~\citet{ref:chan_gof_16}, which considered only a single feasible decision (``single-point''), to the case of multiple observed decisions (``ensemble'') without any assumption on feasibility. 
\comm{
The previous results do not trivially generalize to our assumption-free settings and we require a suite of new proof techniques to extend the results in these two directions. 
}
Our framework is founded on a flexible model template and specializes to several different models via appropriate specification of model hyperparameters. We develop methods to impute the best-fit cost vector for a variety of different loss measures under a general setting (i.e., no assumptions on data), while also introducing efficient techniques under mild application-specific assumptions. Finally, we generalize a previous goodness-of-fit metric for inverse optimization~\citep{ref:chan_gof_16} to the ensemble case. Together, the model and goodness-of-fit metric form a unified framework for model fitting and evaluation in inverse optimization that is applicable to arbitrary decision data for a single linear optimization problem.

Data-driven inverse optimization has received growing interest, particularly for learning in a class of parametrized convex forward problems~\citep{ref:keshavarz_isic11, ref:bertsimas_mp15, ref:aswani_arxiv15, ref:esfahani_oo15}. Contrasting previous papers, which consider a separate feasible set for each decision, our methods are tailored for a single feasible set, given the motivating assumption of different decision makers solving the same forward problem. Our setting admits more efficient solution algorithms that leverage the geometry of linear programming. 
Furthermore, we develop new bounds relating the performance of different variants that are tighter than bounds adapted from the general convex case to the linear setting~\citep{ref:bertsimas_mp15, ref:esfahani_oo15}.

The specific contributions of our paper are as follows:
\begin{enumerate}
    \item We develop an inverse linear optimization approach applicable to decision data sets of arbitrary size and feasibility for a single optimization problem, motivated by ensemble learning methods. This model is expressed in terms of hyperparameters used to derive different model variants. 
    
    \item We develop exact and assumption-free solution methods for each of the model variants. Under mild data assumptions, we demonstrate how geometric insights from linear optimization can lead to efficient and even analytic solution approaches.
    
    \item We propose a goodness-of-fit metric measuring the model-data fit between a forward problem and arbitrary decision data. We prove several intuitive properties of the metric, including optimality with respect to the inverse optimization model, boundedeness, and monotonicity. 
    
    \item We implement the first ensemble-based automated planning pipeline in radiation therapy, using multiple predictions to design a single treatment for head-and-neck cancer patients. Our plans achieve better clinical trade-offs and our domain-independent goodness-of-fit metric validates our approach.
\end{enumerate}

All proofs can be found in the Electronic Companion.

\section{Background on generalized inverse linear optimization}
\label{sec:preliminaries}

We first review the formulation and main results from~\citet{ref:chan_gof_16}, which introduced an inverse optimization model for linear optimization problems (LPs) unifying both decision and objective space models, but only for a data set with a single feasible observed decision. Let $\bx, \bc \in \field{R}^n$ denote the decision and cost vectors, respectively, and $\bA \in \field{R}^{m \times n}, \bb \in \field{R}^m$ denote the constraint matrix and right-hand side vector, respectively. Let $\set{I} = \left\{1, \dots, m\right\}$ and $\set{J} = \left\{ 1,\dots, n \right\}$. We refer to the following
LP as the forward optimization model 
\begin{align*}
        \FO{\bc}: \quad \minimize{\bx} \quad    & \bc^\tpose \bx \\ 
        \subjectto \quad        & \bx \in \feas := \{\bx \; | \; \bA \bx \geq \bb\}. 
\end{align*}
%
We assume $\feas$ is full-dimensional and $\FO{\bc}$ has no redundant constraints.
Given a \emph{feasible} decision $\bhx \in \feas$, the \emph{single-point} generalized inverse linear optimization problem \citep{ref:chan_gof_16} is
\begin{subequations}
        \label{eq:gio_sp}
        \begin{align}
                \GSIO{\bhx}: \quad \minimize{\bc, \by, \bepsilon} \quad  & \norm{\bepsilon} \\
                \subjectto \quad  & \bA^\tpose \by = \bc, \enskip \by \geq \bzero \label{eq:gio_sp2} \\
                & \bc^\tpose \bhx = \bb^\tpose \by + \bc^\tpose \bepsilon \label{eq:gio_sp3} \\
                & \nnorm{\bc} = 1 \label{eq:gio_sp4} \\
                & \bc \in \set{C}, \bepsilon \in \set{E}. \label{eq:gio_sp6}
        \end{align}
\end{subequations}

Above, $\by \in \field{R}^m$ represents the dual vector for the constraints of the forward problem. Constraints~\eqref{eq:gio_sp2} ensure $\by$ is dual feasible with respect to $\bc$. Constraint~\eqref{eq:gio_sp3} connects $\bc$ and $\by$ with a perturbation vector $\bepsilon \in \field{R}^n$ by enforcing that the pair $(\bhx - \bepsilon, \by)$ satisfy strong duality with respect to $\bc$. Note that these constraints do not imply that the pair is primal-dual optimal (as we have not enforced primal feasibility), but rather that $\bhx-\bepsilon$ lies on a supporting hyperplane $\{ \bx \;|\;\bc^\tpose \bx = \bb^\tpose \by \}$ of the feasible set.
Constraint~\eqref{eq:gio_sp4} is a normalization constraint to prevent the trivial solution of $\bc=\bzero$, where $\nnorm{\cdot}$ denotes an arbitrary norm that may differ from the one in the objective. Finally, constraints~\eqref{eq:gio_sp6} define application-specific perturbation and cost vector restrictions via the sets $\set{E}$ and $\set{C}$, respectively. We leave the choice of the norm in the objective open. The tuple $\left( \norm{\cdot}, \nnorm{\cdot}, \set{C}, \set{E} \right)$ forms the inverse optimization model \emph{hyperparameters}. 
By selecting them appropriately, $\GSIO{\bhx}$ specializes into models that minimize error in objective or decision space. 

%



%
%
%

Although $\GSIO{\bhx}$ is non-convex, it admits a closed-form solution if $\bhx \in \feas$, by finding a projection of $\bhx$ to the boundary of $\feas$ of minimum distance as measured by $\norm{\cdot}$. Specifically, let $\set{H}_i = \left\{ \bx \mid \ba_i^\tpose \bx = b_i \right\}$ be the hyperplane corresponding to the $i^{th}$ constraint and 
\begin{align}
        \proji{\bhx} = \argmin_{\bx \in \set{H}_i}  \norm{\bhx - \bx} \label{eq:def_proji}
\end{align}
be the projection of $\bhx$ to $\set{H}_i$. The hyperplane projection problem has an analytic solution $\proji{\bhx} = \bhx - \frac{\ba_i^\tpose \bhx - b_i}{\normd{\ba_i}} \unitvec{\ba_i}$, where $\normd{\cdot}$ is the dual norm of $\norm{\cdot}$ and $\unitvec{\ba_i} \in \argmax_{\norm{\bv} = 1} \left\{ \bv^\tpose \ba_i \right\}$ \citep{ref:mangasarian_99}. This result leads to an analytic optimal solution to $\mathbf{GIO}(\bhx)$ when $\bhx$ is feasible.
\begin{theorem}[Chan et al., 2019]
        \label{thm:gio_sp_sol}
        Let $\bhx \in \feas$, $i^* \in \argmin_{i \in \set{I}} \left\{ \frac{\ba_i^\tpose \bhx-b_i}{\normd{\ba_i}}\right\}$, and $\BFe_{i}$ be the $i^{th}$ unit vector.
        \if 0 
                \begin{align*}
                        i^* \in \argmin_{i \in \set{I}} \left\{ \frac{\ba_i^\tpose \bhx-b_i}{\normd{\ba_i}}\right\}. 
                \end{align*}
        \fi 
        There exists an optimal solution to $\GSIO{\bhx}$ of the form
        \begin{align}
                \left( \bc^*, \by^*, \bepsilon^* \right) = \left( \frac{\ba_{i^*}}{\nnorm{\ba_{i^*}}}, \frac{\BFe_{i^*}}{\nnorm{\ba_{i^*}}}, \bhx - \projistar{\bhx} \right). \label{eq:gio_sp_sol}
        \end{align}
        %
\end{theorem}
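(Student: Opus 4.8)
The plan is to prove the statement in two parts: (i) verify that the tuple $(\bc^*,\by^*,\bepsilon^*)$ in~\eqref{eq:gio_sp_sol} is feasible for $\GSIO{\bhx}$, and (ii) show that its objective value $\norm{\bepsilon^*}$ is a lower bound for the objective value of every feasible solution, so it must be optimal. Two preliminary facts will be used throughout: since $\feas$ is full-dimensional and $\FO{\bc}$ has no redundant constraints, no row $\ba_i$ is zero, so $\normd{\ba_i}>0$ for all $i$; and the closed form $\proji{\bhx}=\bhx-\frac{\ba_i^\tpose\bhx-b_i}{\normd{\ba_i}}\unitvec{\ba_i}$ with $\norm{\unitvec{\ba_i}}=1$ gives $\norm{\bhx-\proji{\bhx}}=\frac{\ba_i^\tpose\bhx-b_i}{\normd{\ba_i}}$ for $\bhx\in\feas$.

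\textbf{Feasibility.} This is a direct check. The vector $\bA^\tpose\BFe_{i^*}$ is the $i^*$-th row $\ba_{i^*}$ of $\bA$, so $\bA^\tpose\by^*=\ba_{i^*}/\nnorm{\ba_{i^*}}=\bc^*$ and $\by^*\ge\bzero$, giving~\eqref{eq:gio_sp2}; and $\nnorm{\bc^*}=\nnorm{\ba_{i^*}}/\nnorm{\ba_{i^*}}=1$ gives~\eqref{eq:gio_sp4}. For~\eqref{eq:gio_sp3}, $\bhx-\bepsilon^*=\projistar{\bhx}\in\set{H}_{i^*}$ implies $\ba_{i^*}^\tpose(\bhx-\bepsilon^*)=b_{i^*}$, hence $(\bc^*)^\tpose(\bhx-\bepsilon^*)=b_{i^*}/\nnorm{\ba_{i^*}}=\bb^\tpose\by^*$, which is the strong-duality link. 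The membership conditions $\bc^*\in\set{C}$, $\bepsilon^*\in\set{E}$ in~\eqref{eq:gio_sp6} hold under the hyperparameter choices defining the model variants of interest (e.g., $\set{C}$ and $\set{E}$ taken to be the whole space, or cones/axis-aligned sets compatible with the projection direction); this is the only place the structure of $\set{C},\set{E}$ matters.

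\textbf{Optimality.} Fix any feasible $(\bc,\by,\bepsilon)$. Since $\bhx$ is primal feasible and $\by$ is dual feasible for $\bc$ by~\eqref{eq:gio_sp2}, weak LP duality gives $\bc^\tpose\bhx-\bb^\tpose\by=\by^\tpose(\bA\bhx-\bb)=\sum_{i\in\set{I}}y_i(\ba_i^\tpose\bhx-b_i)\ge 0$. By~\eqref{eq:gio_sp3}, $\bhx-\bepsilon$ lies on the hyperplane $\{\bx\mid\bc^\tpose\bx=\bb^\tpose\by\}$, which is a genuine hyperplane because $\bc\ne\bzero$ by~\eqref{eq:gio_sp4}; hence $\norm{\bepsilon}$ is at least the $\norm{\cdot}$-distance from $\bhx$ to that hyperplane, which equals $(\bc^\tpose\bhx-\bb^\tpose\by)/\normd{\bc}$ by the same point-to-hyperplane distance formula used for $\proji{\cdot}$ above. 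Writing $\bc=\bA^\tpose\by=\sum_iy_i\ba_i$ and using subadditivity and positive homogeneity of $\normd{\cdot}$, we get $\normd{\bc}\le\sum_iy_i\normd{\ba_i}$, which is strictly positive since $\by\ne\bzero$ (otherwise $\bc=\bzero$). Therefore
\begin{align*}
\norm{\bepsilon}\;\ge\;\frac{\sum_{i\in\set{I}}y_i(\ba_i^\tpose\bhx-b_i)}{\sum_{i\in\set{I}}y_i\normd{\ba_i}}\;=\;\frac{\sum_{i\in\set{I}}(y_i\normd{\ba_i})\,\frac{\ba_i^\tpose\bhx-b_i}{\normd{\ba_i}}}{\sum_{i\in\set{I}}y_i\normd{\ba_i}}\;\ge\;\min_{i\in\set{I}}\frac{\ba_i^\tpose\bhx-b_i}{\normd{\ba_i}}\;=\;\frac{\ba_{i^*}^\tpose\bhx-b_{i^*}}{\normd{\ba_{i^*}}},
\end{align*}
where the second inequality holds because the preceding expression is a weighted average, with nonnegative weights $y_i\normd{\ba_i}$ not all zero, of the numbers $(\ba_i^\tpose\bhx-b_i)/\normd{\ba_i}\ge 0$. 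Combined with $\norm{\bepsilon^*}=\norm{\bhx-\projistar{\bhx}}=(\ba_{i^*}^\tpose\bhx-b_{i^*})/\normd{\ba_{i^*}}$, this yields $\norm{\bepsilon^*}\le\norm{\bepsilon}$, so $(\bc^*,\by^*,\bepsilon^*)$ is optimal.

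The real work is the optimality chain: seeing that $\norm{\bepsilon}$ is bounded below by the $\norm{\cdot}$-distance from $\bhx$ to the supporting hyperplane induced by $(\bc,\by)$, and then reducing that distance to the index minimization defining $i^*$ via the dual-norm inequality $\normd{\sum_iy_i\ba_i}\le\sum_iy_i\normd{\ba_i}$ and the weighted-average bound. Feasibility is routine and the $\set{C},\set{E}$ conditions are bookkeeping over the variants; but one must be attentive that every denominator is strictly positive, which is precisely where $\bhx\in\feas$, $\by\ge\bzero$, $\by\ne\bzero$, and $\normd{\ba_i}>0$ are all used.
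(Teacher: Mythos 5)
Your proof is correct, and it reconstructs the standard argument for this result: the paper itself does not reprove Theorem~\ref{thm:gio_sp_sol} but cites it from \citet{ref:chan_gof_16}, where the proof proceeds exactly as you do --- verify feasibility of the candidate, then lower-bound $\norm{\bepsilon}$ for any feasible solution by the point-to-hyperplane distance $(\bc^\tpose\bhx-\bb^\tpose\by)/\normd{\bc}$ and reduce that, via $\normd{\sum_i y_i\ba_i}\le\sum_i y_i\normd{\ba_i}$ and the weighted-average bound, to $\min_i(\ba_i^\tpose\bhx-b_i)/\normd{\ba_i}$. All the positivity caveats you flag ($\normd{\ba_i}>0$, $\by\neq\bzero$, nonnegativity of the duality gap from $\bhx\in\feas$) are exactly the right ones.
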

%

If $\bhx \in \feas$, then by Theorem~\ref{thm:gio_sp_sol}, an optimal cost vector describes a supporting hyperplane (i.e., $\left\{ \bx \mid \bc^{*\tpose} \bx = \bb^\tpose \by^* \right\}$) that also corresponds to a constraint of the forward problem.
%


\begin{figure}[t]
        \centering
        \caption{
            \comm{\vspace{-1em}Illustration of Example~\ref{ex:pathological case}. The feasible set for $\FO{\bc}$ is shaded. The black and red squares are $\bhx_q$ and $\bhx_q - \bepsilon^*_q$ from solving $\GSIO{\bhx_q}$ independently. The solid arrows are potential cost vectors.}
        }
        \includegraphics[width=0.343\linewidth]{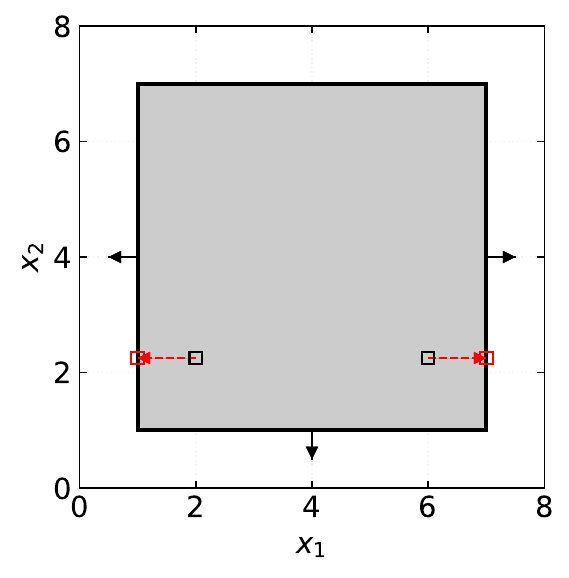}
        \label{fig:pathological}
\end{figure}

\section{Generalized inverse linear optimization with an ensemble of decisions}
\label{sec:multipoint}

We extend $\GSIO{\bhx}$ to the case of multiple observed decisions with no data assumptions. Let $\dataset = \left\{ \bhx_1, \dots, \bhx_Q \right\}$ be a data set, i.e., an ensemble of $Q$ observed decisions, indexed by $\set{Q} = \left\{ 1, \dots, Q \right\}$. We seek to impute a single cost vector $\bc^*$ that minimizes the aggregate loss over all decisions. 

\comm{
Given that Theorem~\ref{thm:gio_sp_sol} admits an analytic solution, one computationally desirable approach may be to solve $\GSIO{\bhx_q}$ for each $\bhx_q$ and impute a set of cost vector estimates. 
We may then consider classical ensemble methods like a random forest, which average weak predictions~\citep{ref:breiman_2001}. However, such a method applied to our setting effectively ignores the geometry of $\feas$, which provides useful information in the estimation of a single cost vector. For example, naively averaging the set of cost vectors to obtain a consensus may lead to pathological outcomes.
}

\comm{
\begin{ex}\label{ex:pathological case}
        Let $\mathbf{FO}(\bc): \underset{\bx}{\min} \{c_1 x_1 + c_2 x_2 \; | \;  x_1 \leq 7, \; x_2 \le 7, \; x_1 \geq 1, \; x_2 \geq 1\}$ and consider two points $\bhx_1 =(2, 2.5)$ and $\bhx_2 = (6, 2.25)$. Solving $\GSIO{\bhx_1}$ and $\GSIO{\bhx_2}$ yields cost vectors $(-1, 0)$ and $(1, 0)$, respectively, with an average cost vector $\bar{\bc} = \bzero$. Note that a more intuitive best-fit cost vector would be $\bc^* = (0, -1)$, pointing to the bottom facet of $\feas$. Figure~\ref{fig:pathological} illustrates this example.
\end{ex}
}

Instead, we design an ensemble inverse optimization model to minimize the aggregate error induced by all points with respect to a single imputed cost vector. We introduce perturbation vectors $\bepsilon_q$ for every $q \in \set{Q}$ and form our problem: 
\begin{subequations}
        \label{eq:gio}
        \begin{align}
                \GMIO{\dataset}: \quad \minimize{\bc, \by, \bepsilon_1,\dots,\bepsilon_Q} \quad    & \sum_{q=1}^Q \norm{\bepsilon_q}\label{eq:gio1} \\ 
                \subjectto \quad    & \bA^\tpose \by = \bc, \enskip \by \geq \bzero \label{eq:gio2} \\
                & \bc^\tpose \bhx_q = \bb^\tpose \by + \bc^\tpose \bepsilon_q, \quad \forall q \in \set{Q} \label{eq:gio3} \\
                & \nnorm{\bc} = 1 \label{eq:gio5} \\
                & \bc \in \set{C}, \bepsilon_q \in \set{E}_q, \quad \forall q \in \set{Q} \label{eq:gio7}.
        \end{align}
\end{subequations}
Constraints~\eqref{eq:gio2} and~\eqref{eq:gio5} are carried from the single-point model, while~\eqref{eq:gio3} and~\eqref{eq:gio7} are ensemble extensions of~\eqref{eq:gio_sp3} and~\eqref{eq:gio_sp6} respectively, ensuring that for each $q \in \set{Q}$, the data points $\bhx_q$ achieve strong duality with respect to $\bc$ after being perturbed by $\bepsilon_q \in \set{E}_q$. The objective minimizes the sum of the norms of the individual perturbation vectors. 
%
%
%
Note that this problem is 
non-convex due to the bilinear terms in~\eqref{eq:gio3} and the normalization constraint~\eqref{eq:gio5}. 
We first show that $\GMIO{\dataset}$ specializes to objective and decision space variants, before developing tailored solution methods.

\subsection{Objective space}
\label{sec:gio_os}
Inverse linear optimization in the objective space is based on the premise that sub-optimal observed decisions are characterized by sub-optimal objective values. Consider the dual problem for $\FO{\bc}$. For each decision $\bhx_q$, the corresponding duality gap is a distance measure
between the objective value of $\bhx_q$ and the optimal value of the dual problem. By choosing the norm in the objective~\eqref{eq:gio1} and the sets $\set{E}_q$ for each $q \in \set{Q}$ appropriately, the problem is transformed to measure a function of the duality gap. We consider two objective space models, the absolute and relative duality gaps.

\subsubsection{Absolute duality gap.}~%
\label{sec:gioadg}
The absolute duality gap method minimizes the aggregate duality gap between the primal objectives of each decision and the imputed dual optimal value:
\begin{subequations}
        \label{eq:gioa}
        \begin{align}
                \GIOa{\dataset}: \quad \minimize{\bc, \by, \epsilon_1,\dots,\epsilon_Q} \quad    & \sum_{q=1}^Q | \epsilon_q | \label{eq:gioa1} \\
                \subjectto \quad    & \bA^\tpose \by = \bc, \enskip \by \geq \bzero \label{eq:gioa2} \\
                & \bc^\tpose \bhx_q = \bb^\tpose \by + \epsilon_q, \quad \forall q \in \set{Q} \label{eq:gioa3} \\
                & \nnorm{\bc} = 1. \label{eq:gioa5}
        \end{align}
\end{subequations}
%
%
This model specializes $\GMIO{\dataset}$ by measuring error in terms of scalar duality gap variables. 
We show that it can be recovered from $\GMIO{\dataset}$ with an appropriate choice of model hyperparameters.
\begin{proposition}
        \label{propn:gioa_specialization}
        Let $\cancelvec{\bc} \in \field{R}^n$ be a parameter satisfying $\norm{\cancelvec{\bc}}_\infty = 1$ and $\cancelvec{\bc}^\tpose \bc = 1$. A solution $\left( \bc^*, \by^*, \epsilon_1^*,\dots,\epsilon_Q^* \right)$ is optimal to $\GIOa{\dataset}$ if and only if $\left( \bc^*, \by^*, \epsilon_1^* \cancelvec{\bc^*}, \dots, \epsilon_Q^* \cancelvec{\bc^*} \right)$ is optimal to $\GMIO{\dataset}$ with  hyperparameters
%
        $ \left( \norm{\cdot}, \nnorm{\cdot}, \set{C}, \set{E}_1, \dots, \set{E}_Q \right) = \left( \norm{\cdot}_\infty, \nnorm{\cdot}, \field{R}^n, \left\{ \epsilon_1 \cancelvec{\bc} \right\}, \dots, \left\{ \epsilon_Q \cancelvec{\bc} \right\} \right)$.
%
\end{proposition}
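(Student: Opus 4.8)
The plan is to prove the biconditional by establishing a bijection between feasible solutions of the two problems that preserves the objective value. First I would fix the hyperparameters as stated and write out explicitly what $\GMIO{\dataset}$ becomes under this choice: the constraint $\bc \in \set{C} = \field{R}^n$ is vacuous, the constraint $\bepsilon_q \in \set{E}_q = \{\epsilon_q \cancelvec{\bc}\}$ forces each perturbation vector to be a scalar multiple of the fixed direction $\cancelvec{\bc}$, and the objective term $\norm{\bepsilon_q}_\infty = \norm{\epsilon_q \cancelvec{\bc}}_\infty = |\epsilon_q| \cdot \norm{\cancelvec{\bc}}_\infty = |\epsilon_q|$ since $\norm{\cancelvec{\bc}}_\infty = 1$. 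So the specialized $\GMIO{\dataset}$ has decision variables $\bc, \by$ and scalars $\epsilon_1,\dots,\epsilon_Q$, objective $\sum_q |\epsilon_q|$, and is identical to $\GIOa{\dataset}$ except that constraint~\eqref{eq:gio3} reads $\bc^\tpose \bhx_q = \bb^\tpose \by + \bc^\tpose (\epsilon_q \cancelvec{\bc}) = \bb^\tpose \by + \epsilon_q (\bc^\tpose \cancelvec{\bc})$, whereas~\eqref{eq:gioa3} reads $\bc^\tpose \bhx_q = \bb^\tpose \by + \epsilon_q$.

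The crux is therefore to reconcile the factor $\bc^\tpose \cancelvec{\bc}$. Here is where the condition $\cancelvec{\bc}^\tpose \bc = 1$ enters — but note it is stated as a condition on $\cancelvec{\bc}$ \emph{relative to} $\bc$, so the natural reading is that for any $\bc$ appearing in a feasible solution we choose (or are given) a compatible $\cancelvec{\bc}$ with $\cancelvec{\bc}^\tpose \bc = 1$ and $\norm{\cancelvec{\bc}}_\infty = 1$; such a vector exists whenever $\bc \neq \bzero$, which is guaranteed by the normalization $\nnorm{\bc} = 1$ (take, e.g., $\cancelvec{\bc} = \BFe_k / c_k$ for an index $k$ achieving $\norm{\bc}_\infty$, after rescaling — one should check $|1/c_k| = 1/\norm{\bc}_\infty \cdot (\norm{\bc}_\infty) $... more carefully, pick $k \in \argmax_j |c_j|$ and set $\cancelvec{\bc} = \BFe_k \operatorname{sign}(c_k)/|c_k| \cdot |c_k|$; the point is a valid $\cancelvec{\bc}$ exists, and this is the one technical verification to do). Given that $\cancelvec{\bc}^\tpose \bc = 1$, constraint~\eqref{eq:gio3} collapses exactly to~\eqref{eq:gioa3}, so the feasible regions match under the map $(\bc,\by,\epsilon_1,\dots,\epsilon_Q) \mapsto (\bc,\by,\epsilon_1\cancelvec{\bc},\dots,\epsilon_Q\cancelvec{\bc})$ and its inverse (recovering $\epsilon_q$ from $\epsilon_q \cancelvec{\bc}$ by applying $\cancelvec{\bc}^\tpose$, or just reading off any nonzero coordinate). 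Since the objective values coincide term by term, optimality transfers in both directions, proving the biconditional.

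The main obstacle I anticipate is handling the logical dependence of $\cancelvec{\bc}$ on $\bc$ cleanly: $\cancelvec{\bc}$ is not a fixed constant independent of the solution, so one must be careful that the hyperparameter set $\set{E}_q = \{\epsilon_q \cancelvec{\bc}\}$ is well-defined. The cleanest way to present this is: (i) show that for every optimal $\bc^*$ of $\GIOa{\dataset}$ there is a valid $\cancelvec{\bc^*}$, fix it, and verify the forward direction; (ii) for the converse, take any optimal solution of the specialized $\GMIO{\dataset}$ — whose $\cancelvec{\cdot}$ direction is baked into $\set{E}_q$ — extract the scalars $\epsilon_q$, and verify feasibility and optimality in $\GIOa{\dataset}$. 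I would also explicitly note that the objective of $\GMIO{\dataset}$ uses $\norm{\cdot}_\infty$ precisely so that $\norm{\epsilon_q \cancelvec{\bc}}_\infty = |\epsilon_q|$; any other norm would introduce a constant factor $\norm{\cancelvec{\bc}}$ that, while harmless for the argmax, is cleanest to avoid. The rest is routine substitution, so the write-up should be short.
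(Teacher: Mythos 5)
Your proposal is correct and follows essentially the same route as the paper's proof: substitute $\bepsilon_q = \epsilon_q\cancelvec{\bc}$, use $\norm{\cancelvec{\bc}}_\infty = 1$ to turn the objective into $\sum_q|\epsilon_q|$ and $\cancelvec{\bc}^\tpose\bc = 1$ to collapse the strong-duality constraint, then observe the objective-preserving correspondence of feasible solutions in both directions. Your side discussion of when a valid $\cancelvec{\bc}$ exists is not needed for the statement as written (the proposition takes $\cancelvec{\bc}$ as a given parameter, and the paper treats existence separately in the discussion following the proposition), so the core argument stands as is.
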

%
%

 %
Proposition~\ref{propn:gioa_specialization} shows that the specialization of $\GMIO{\dataset}$ to $\GIOa{\dataset}$ depends on each $\bepsilon_q$ being a rescaling of some $\cancelvec{\bc}$ that is dependent only on the cost vector. Note that $\cancelvec{\bc}$ is only a vehicle to aid in the specialization of $\GMIO{\dataset}$, and is useful to interpret solutions of $\GIOa{\dataset}$ in the context of $\GMIO{\dataset}$. For all $\bc$ satisfying $\nnorm{\bc} = 1$, $\cancelvec{\bc}$ must satisfy $\norm{\cancelvec{\bc}}_\infty = 1$ and $\cancelvec{\bc}^\tpose \bc = 1$. Given a specific $\nnorm{\cdot}$, we can propose a structured $\cancelvec{\bc}$. For example, if $\nnorm{\cdot} = \norm{\cdot}_1$, let $\cancelvec{\bc} = \sgn{(\bc)}$ be the sign vector of $\bc$, ensuring that the conditions on $\cancelvec{\bc}$ are satisfied for all $\bc$ with $\norm{\bc}_1 = 1$. If $\nnorm{\cdot} = \norm{\cdot}_\infty$, let $\cancelvec{\bc} = \sgn{(c_{j^*})}\BFe_{j^*}$ be the $j^*$-th unit vector, where $j^* \in \argmax_{j \in \set{J}} \left\{ |c_j| \right\}$. 

\textbf{General solution method.~}
Since the normalization constraint is the sole non-convexity in $\GIOa{\dataset}$, this model can be solved exactly by polyhedral decomposition. The efficiency of this approach depends on the choice of the norm. For example, $2n$ LPs are needed if $\nnorm{\cdot} = \norm{\cdot}_\infty$. 

\begin{theorem}
        \label{thm:gio_reform}
        %
        Let $\left(\bc^*, \by^*, \epsilon_1^*,\dots,\epsilon_Q^*\right)$ be optimal to $\GIOa{\dataset}$ under $\nnorm{\cdot} = \norm{\cdot}_\infty$. There exists $j \in \set{J}$ such that $\left(\bc^*, \by^*, \epsilon_1^*,\dots,\epsilon_Q^*\right)$ is also optimal to $\GIOa{\dataset; j}$, defined as:
        \begin{align}
                \begin{split}
                        \GIOap{\dataset; j}: \quad \minimize{\bc, \by, \epsilon_1,\dots,\epsilon_Q} \quad    & \sum_{q=1}^Q | \epsilon_q | \\
                        \subjectto \quad    & \bA^\tpose \by = \bc, \enskip \by \geq \bzero \\
                        & \bc^\tpose \bhx_q = \bb^\tpose \by + \epsilon_q, \quad \forall q \in \set{Q} \\
                        & \left( c_j = 1 \right) \vee \left( c_j = -1 \right) \\
                        & | c_k | \leq 1, \quad \forall k \in \set{J}/\{j\}.
                \end{split} \label{eq:gioa_reform}
        \end{align}
\end{theorem}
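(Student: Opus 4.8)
The plan is to exploit the fact that the only non-convexity in $\GIOa{\dataset}$ is the normalization constraint $\nnorm{\bc} = \norm{\bc}_\infty = 1$, whose feasible set is the boundary of the $\ell_\infty$ ball and hence a finite union of closed polyhedral facets. Concretely, since $\norm{\bc}_\infty = \max_{j \in \set{J}} |c_j|$, we have the elementary identity
\begin{align*}
  \left\{ \bc \in \field{R}^n : \norm{\bc}_\infty = 1 \right\} \;=\; \bigcup_{j \in \set{J}} \left\{ \bc \in \field{R}^n : |c_j| = 1,\ |c_k| \le 1 \ \ \forall k \in \set{J} \setminus \{j\} \right\}.
\end{align*}
The $j$-th set on the right is exactly what the last two constraints of $\GIOap{\dataset;j}$ impose, since $(c_j = 1) \vee (c_j = -1)$ is the same as $|c_j| = 1$. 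All remaining constraints, \eqref{eq:gioa2}--\eqref{eq:gioa3}, and the objective \eqref{eq:gioa1} are shared verbatim between $\GIOa{\dataset}$ and each $\GIOap{\dataset;j}$.

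First I would introduce $\set{F}$ for the feasible set of $\GIOa{\dataset}$ in the variables $(\bc, \by, \epsilon_1, \dots, \epsilon_Q)$ and $\set{F}_j$ for that of $\GIOap{\dataset;j}$. The identity above immediately yields $\set{F}_j \subseteq \set{F}$ for every $j \in \set{J}$ and $\set{F} = \bigcup_{j \in \set{J}} \set{F}_j$: a tuple satisfying \eqref{eq:gioa2}--\eqref{eq:gioa3} together with $\norm{\bc}_\infty = 1$ must, for any index $j$ attaining the maximum in $\max_k |c_k|$, also satisfy $|c_j| = 1$ and $|c_k| \le 1$ for all $k \neq j$, and conversely every tuple of the latter form has $\norm{\bc}_\infty = 1$.

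Next I would take the given optimal solution $(\bc^*, \by^*, \epsilon_1^*, \dots, \epsilon_Q^*)$ of $\GIOa{\dataset}$, with optimal value $z^* = \sum_{q=1}^Q |\epsilon_q^*|$. Since this tuple lies in $\set{F} = \bigcup_{j} \set{F}_j$, it lies in $\set{F}_j$ for at least one $j \in \set{J}$; fix such a $j$. Because $\set{F}_j \subseteq \set{F}$, every feasible point of $\GIOap{\dataset;j}$ has objective value at least $z^*$, while $(\bc^*, \by^*, \epsilon_1^*, \dots, \epsilon_Q^*) \in \set{F}_j$ attains exactly $z^*$; hence it is optimal for $\GIOap{\dataset;j}$, which is the claim. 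As a byproduct, the optimal value of $\GIOa{\dataset}$ equals $\min_{j \in \set{J}}$ of the optimal values of $\GIOap{\dataset;j}$, and each $\GIOap{\dataset;j}$ in turn reduces to the better of the two linear programs obtained by fixing $c_j = 1$ or $c_j = -1$ (after the standard epigraph reformulation of $\sum_q |\epsilon_q|$), for a total of $2n$ LPs, consistent with the remark preceding the theorem.

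The argument is essentially bookkeeping, so I do not expect a genuine obstacle; the one point that must be handled with care is the feasible-set decomposition $\set{F} = \bigcup_j \set{F}_j$ — in particular the inclusion $\set{F} \subseteq \bigcup_j \set{F}_j$, which is what certifies that passing to a single coordinate block $\set{F}_j$ never discards every optimal solution of the original problem. This is precisely the statement that the $\ell_\infty$ sphere is a union of facets, and the same template (with a larger collection of facets) would apply to $\nnorm{\cdot} = \norm{\cdot}_1$ or any other polyhedral normalization.
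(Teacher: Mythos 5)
Your proposal is correct and follows essentially the same route as the paper's proof: you decompose the $\ell_\infty$ normalization sphere into the coordinate facets $|c_j|=1$, $|c_k|\le 1$ for $k\neq j$, observe that each subproblem's feasible set is contained in that of $\GIOa{\dataset}$, and pick the facet (equivalently, an index attaining $\max_k |c^*_k|$) containing the given optimal solution. The paper's argument is just a terser version of this same two-directional feasibility bookkeeping, so no further comment is needed.
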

For each $j$, the problem $\GIOap{\dataset; j}$ separates into two LPs (one with the constraint $c_j = 1$ and the other with $c_j = -1$), thus totaling $2n$ LPs. 
\comm{When $\nnorm{\cdot} \neq \norm{\cdot}_\infty$ in general, an exponential number of LPs may be required.} 
We next discuss special cases where this approach simplifies.

\textbf{Non-negative cost vectors.~}%
In many real-world applications, feasible cost vectors should be non-negative (i.e., $\set{C} \subseteq \field{R}^n_+$). Here, it is advantageous to set $\nnorm{\cdot} = \norm{\cdot}_1$, because the normalization constraint becomes $\bc^\tpose \bone = 1$ and $\GIOa{\dataset}$ simplifies to a single linear optimization problem.\@ 

\textbf{Feasible observed decisions.~}%
Most inverse optimization literature focuses on the situation where all observed decisions are feasible for the forward model (i.e., $\dataset \subset \feas$). In this case, $\dataset$ can be replaced by the singleton $\left\{ \bbx \right\}$, where $\bbx$ is the centroid of the points in $\dataset$. A similar result was presented in~\citet[Chapter 4]{ref:goli_thesis}, but for a model with a different normalization constraint that did not prevent trivial solutions. We present the analogous result in the context of our model~\eqref{eq:gioa}. 
\begin{proposition}
        \label{propn:gioa_optimal}
        If $\dataset \subset \feas$ and $\bar{\bx}$ is the centroid of $\dataset$, $\GIOa{\dataset}$ is equivalent to $\GSIOa{\bbx}$.
\end{proposition}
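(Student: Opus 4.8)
The plan is to exploit the feasibility assumption $\dataset \subset \feas$ to show that every duality-gap variable is automatically non-negative at any feasible point of $\GIOa{\dataset}$, so that the absolute values in the objective~\eqref{eq:gioa1} disappear and the objective collapses to an affine function of the centroid $\bbx$.

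First I would fix any feasible triple $(\bc,\by,\epsilon_1,\dots,\epsilon_Q)$ of $\GIOa{\dataset}$. Constraint~\eqref{eq:gioa3} forces $\epsilon_q = \bc^\tpose\bhx_q - \bb^\tpose\by$ for each $q \in \set{Q}$. Combining $\bA^\tpose\by = \bc$ and $\by \geq \bzero$ from~\eqref{eq:gioa2} with $\bA\bhx_q \geq \bb$ (valid since $\bhx_q \in \feas$) yields the weak-duality inequality $\bc^\tpose\bhx_q = \by^\tpose\bA\bhx_q \geq \by^\tpose\bb = \bb^\tpose\by$, hence $\epsilon_q \geq 0$. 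Therefore $\sum_{q \in \set{Q}} |\epsilon_q| = \sum_{q \in \set{Q}} \epsilon_q = \bc^\tpose \left( \sum_{q \in \set{Q}} \bhx_q \right) - Q\,\bb^\tpose\by = Q\left( \bc^\tpose\bbx - \bb^\tpose\by \right)$, using the definition of the centroid $\bbx = \tfrac1Q\sum_{q\in\set{Q}}\bhx_q$.

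Next I would match this with $\GSIOa{\bbx}$. Setting $\epsilon := \bc^\tpose\bbx - \bb^\tpose\by = \tfrac1Q\sum_{q\in\set{Q}}\epsilon_q \geq 0$, the triple $(\bc,\by,\epsilon)$ satisfies the single-point constraints $\bA^\tpose\by = \bc$, $\by \geq \bzero$, $\bc^\tpose\bbx = \bb^\tpose\by + \epsilon$, $\nnorm{\bc}=1$, with objective value $|\epsilon| = \epsilon = \tfrac1Q\sum_{q\in\set{Q}}|\epsilon_q|$. Since these single-point constraints are identical to the $\GIOa{\dataset}$ constraints on the shared variables $(\bc,\by)$, and in both models the remaining variables are uniquely pinned down by $(\bc,\by)$, this correspondence is a bijection between the two feasible sets under which the $\GIOa{\dataset}$ objective equals $Q$ times the $\GSIOa{\bbx}$ objective. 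Consequently $(\bc^*,\by^*,\epsilon_1^*,\dots,\epsilon_Q^*)$ is optimal for $\GIOa{\dataset}$ if and only if $(\bc^*,\by^*,\tfrac1Q\sum_{q\in\set{Q}}\epsilon_q^*)$ is optimal for $\GSIOa{\bbx}$, the optimal values differing by the fixed factor $Q$; in particular the two models share the same set of optimal cost vectors, which is the intended sense of equivalence. (Convexity of $\feas$, automatic for a polyhedron, additionally guarantees $\bbx \in \feas$, so $\GSIOa{\bbx}$ is a bona fide single-point instance to which Theorem~\ref{thm:gio_sp_sol} applies.)

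The argument is essentially mechanical, and the single point deserving care is exactly where the hypothesis enters: feasibility of the data is what makes the absolute values collapse so that the sum telescopes to the centroid; without $\dataset \subset \feas$ the objective is a genuine sum of absolute deviations that need not reduce to a single point. I would also note that the telescoping identity holds even when $\bc^\tpose\bhx_q = \bb^\tpose\by$ for some $q$, since each term $\epsilon_q$ is individually non-negative, so no case analysis is needed.
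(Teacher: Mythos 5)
Your proof is correct and follows essentially the same route as the paper's: use weak duality (from dual feasibility of $\by$ and primal feasibility of each $\bhx_q$) to drop the absolute values, then telescope the sum to $Q\left(\bc^\tpose\bbx - \bb^\tpose\by\right)$ and compress the $Q$ strong-duality constraints into one for the centroid. Your version merely spells out the bijection between feasible sets and the constant factor $Q$ more explicitly than the paper does.
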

%
%
%
Together, Proposition~\ref{propn:gioa_optimal} and Theorem~\ref{thm:gio_sp_sol} imply that $\GIOa{\dataset}$ is analytically solvable when $\dataset \subset \feas$.

\textbf{Infeasible observed decisions.~}%
Finally, we address scenarios where the observed decisions are all infeasible. We first consider the case where $\dataset$ is a single, infeasible observed decision $\hat\bx$.


\begin{proposition}
        \label{propn:gioa_infeas_sol}
        Assume $\bhx \notin \feas$.
        \begin{enumerate}
                \item If $\bhx$ satisfies $\ba_i^\tpose \bhx > b_i$ for some $i \in \set{I}$, then there also exists $i^* \in \set{I}$ such that $\bty$ is 
                        \begin{align}
                                \label{eq:gioa_sp_infeas_sol}
                                \tilde{y}_i = \frac{1}{\ba_i^\tpose \bhx - b_i}, \quad \tilde{y}_{i^*} = \frac{1}{b_{i^*} - \ba_{i^*}^\tpose \bhx}, \quad \tilde{y}_k = 0 \quad \forall k \in \set{I}\setminus\left\{ i,i^* \right\}
                        \end{align}
                        and $\btc = \bA^\tpose \bty$. The corresponding normalized solution $(\bc^*, \by^*, \epsilon^*) = (\btc / \nnorm{\btc}, \bty / \nnorm{\btc}, 0)$ is an optimal solution to $\GIOa{\{\bhx\}}$ and the optimal value is $0$.

                \item If $\bA \bhx \leq \bb$, there exists $i^* \in \set{I}$ such that~\eqref{eq:gio_sp_sol} is an optimal solution to $\GIOa{\{\bhx\}}$.
        \end{enumerate}
\end{proposition}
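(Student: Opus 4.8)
The plan is to handle the two parts separately but to ground both in the same reduction. In $\GIOa{\{\bhx\}}$ the equality $\bA^\tpose\by = \bc$ lets one eliminate $\bc$, and the coupling constraint~\eqref{eq:gioa3} then collapses to $\epsilon = \by^\tpose(\bA\bhx - \bb)$. Thus $\GIOa{\{\bhx\}}$ is equivalent to minimizing $|\by^\tpose(\bA\bhx-\bb)|$ over $\{\by \ge \bzero : \nnorm{\bA^\tpose\by} = 1\}$, and in particular its optimal value is always nonnegative.

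For the first part, the plan is simply to exhibit a feasible point of objective value $0$, which must then be optimal. By hypothesis there is an index $i$ with $\ba_i^\tpose\bhx - b_i > 0$, and since $\bhx \notin \feas$ there is an index $i^*$ with $\ba_{i^*}^\tpose\bhx - b_{i^*} < 0$; I would take $\bty$ as in~\eqref{eq:gioa_sp_infeas_sol} for this pair. Then the easy checks are $\bty \ge \bzero$ (its two nonzero entries are reciprocals of positive numbers) and $\bty^\tpose(\bA\bhx - \bb) = \tilde{y}_i(\ba_i^\tpose\bhx - b_i) + \tilde{y}_{i^*}(\ba_{i^*}^\tpose\bhx - b_{i^*}) = 1 - 1 = 0$, so $(\btc,\bty,0)$ with $\btc = \bA^\tpose\bty$ satisfies every constraint of $\GIOa{\{\bhx\}}$ except possibly the normalization. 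The only remaining point, and the crux, is that $\btc \ne \bzero$, so that $(\btc/\nnorm{\btc}, \bty/\nnorm{\btc}, 0)$ is feasible with objective $0$, hence optimal with optimal value $0$. I would prove $\btc \ne \bzero$ by contradiction: if $\tilde{y}_i\ba_i + \tilde{y}_{i^*}\ba_{i^*} = \bzero$ then, since no constraint is trivial ($\ba_i \ne \bzero$), $\ba_{i^*} = -\mu\ba_i$ with $\mu = \tilde{y}_i/\tilde{y}_{i^*} > 0$; substituting $\ba_{i^*} = -\mu\ba_i$ into the definition of $\tilde{y}_{i^*}$ and simplifying the relation $\tilde{y}_i = \mu\tilde{y}_{i^*}$ yields $\mu b_i + b_{i^*} = 0$. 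But then constraints $i$ and $i^*$ read $\ba_i^\tpose\bx \ge b_i$ and $\ba_i^\tpose\bx \le -b_{i^*}/\mu = b_i$, forcing $\feas \subseteq \set{H}_i$ and contradicting full-dimensionality of $\feas$.

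For the second part, with $\bA\bhx \le \bb$ every $\by \ge \bzero$ has $\by^\tpose(\bA\bhx - \bb) \le 0$, so the reduced objective is $|\epsilon| = \by^\tpose(\bb - \bA\bhx)$, a linear function with nonnegative coefficients, and $\GIOa{\{\bhx\}}$ becomes $\min\{\by^\tpose(\bb - \bA\bhx) : \by \ge \bzero,\ \nnorm{\bA^\tpose\by} = 1\}$, whose feasible set is nonempty since each $\ba_i \ne \bzero$ makes $\BFe_i/\nnorm{\ba_i}$ feasible. I would then show that putting all dual weight on a single constraint is optimal: for any feasible $\by$, writing $y_i(b_i - \ba_i^\tpose\bhx) = \frac{b_i - \ba_i^\tpose\bhx}{\nnorm{\ba_i}}\,\nnorm{y_i\ba_i}$ and using the triangle inequality,
\begin{align*}
\by^\tpose(\bb - \bA\bhx) &= \sum_{i\in\set{I}}\frac{b_i - \ba_i^\tpose\bhx}{\nnorm{\ba_i}}\,\nnorm{y_i\ba_i} \;\ge\; \Bigl(\min_{k\in\set{I}}\frac{b_k - \ba_k^\tpose\bhx}{\nnorm{\ba_k}}\Bigr)\sum_{i\in\set{I}}\nnorm{y_i\ba_i} \\
&\ge\; \Bigl(\min_{k\in\set{I}}\frac{b_k - \ba_k^\tpose\bhx}{\nnorm{\ba_k}}\Bigr)\nnorm{\bA^\tpose\by} \;=\; \min_{k\in\set{I}}\frac{b_k - \ba_k^\tpose\bhx}{\nnorm{\ba_k}},
\end{align*}
with equality at $\by^* = \BFe_{i^*}/\nnorm{\ba_{i^*}}$ for any $i^* \in \argmin_{k\in\set{I}} (b_k - \ba_k^\tpose\bhx)/\nnorm{\ba_k}$. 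The induced solution is $\bc^* = \ba_{i^*}/\nnorm{\ba_{i^*}}$, $\by^* = \BFe_{i^*}/\nnorm{\ba_{i^*}}$, $\epsilon^* = (\ba_{i^*}^\tpose\bhx - b_{i^*})/\nnorm{\ba_{i^*}}$, which is exactly the triple~\eqref{eq:gio_sp_sol}; when $\nnorm{\cdot}$ is the dual of the model's objective norm this $i^*$ is the hyperplane nearest $\bhx$ and $|\epsilon^*| = \norm{\bhx - \projistar{\bhx}}$, matching the perturbation vector written there.

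I expect the main obstacle to be the nondegeneracy claim $\btc \ne \bzero$ in the first part: this is the one place where the full-dimensionality/no-redundancy hypothesis is genuinely used, and it has no analogue in the single-point feasible analysis. In the second part the only real step is the triangle-inequality argument that a single-constraint dual is optimal, together with reconciling the scalar perturbation of the $\GIOa$ model with the vector $\bhx - \projistar{\bhx}$ in~\eqref{eq:gio_sp_sol}; the feasibility checks and the algebraic simplifications throughout are routine.
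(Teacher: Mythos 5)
Your proposal is correct, and it diverges from the paper's proof in an instructive way. For Part~1 you follow the same route as the paper: exhibit the two-index dual vector $\bty$, verify dual feasibility and a zero duality gap, and normalize. The one thing you add --- the argument that $\btc = \bA^\tpose\bty \neq \bzero$, via the observation that $\tilde{y}_i\ba_i + \tilde{y}_{i^*}\ba_{i^*} = \bzero$ would force $\feas \subseteq \set{H}_i$ and contradict full-dimensionality --- is genuinely absent from the paper's proof, which simply says ``we normalize the solution'' without checking that normalization is possible; your version closes that small gap. For Part~2 you take a different route: the paper reduces to the sign-reversed forward problem $\FOA$, observes that $\bhx$ is feasible for it, and invokes Theorem~\ref{thm:gio_sp_sol} wholesale, whereas you argue directly that the objective collapses to $\by^\tpose(\bb - \bA\bhx)$ over $\{\by \geq \bzero,\ \nnorm{\bA^\tpose\by}=1\}$ and use a triangle-inequality bound to show concentration on a single constraint is optimal. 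The paper's reduction is shorter, reuses existing machinery, and sets up Corollary~\ref{cor:gioa_infeas_sol_mp}; your direct argument is self-contained and makes the optimal index explicit, at the cost of having to reconcile the scalar $\epsilon^*$ of $\GIOa{\{\bhx\}}$ with the vector $\bepsilon^* = \bhx - \projistar{\bhx}$ in~\eqref{eq:gio_sp_sol} and the appearance of $\nnorm{\cdot}$ rather than $\normd{\cdot}$ in the $\argmin$ --- a looseness that is already present in the proposition's statement and in the paper's own treatment, so it does not count against you.
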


\label{ex:gio_adg_infeas_sp}
\begin{figure}[t]
        \centering
        \caption{
            \vspace{-1em}Illustration of Proposition~\ref{propn:gioa_infeas_sol}. \comm{The feasible set for $\FO{\bc}$ is shaded. The black and red squares are $\bhx$ and $\bhx - \bepsilon^*$, respectively. The dashed line is the supporting hyperplane yielding an optimal value of $0$.}
        }
        \subcaptionbox{
            \comm{\vspace{-1em}If one constraint is satisfied, $\bhx$ projects to \\ that constraint.}
            \label{fig:gio_adg_infeas_spA}
        }[0.49\linewidth]{%
                \includegraphics[width=0.343\linewidth]{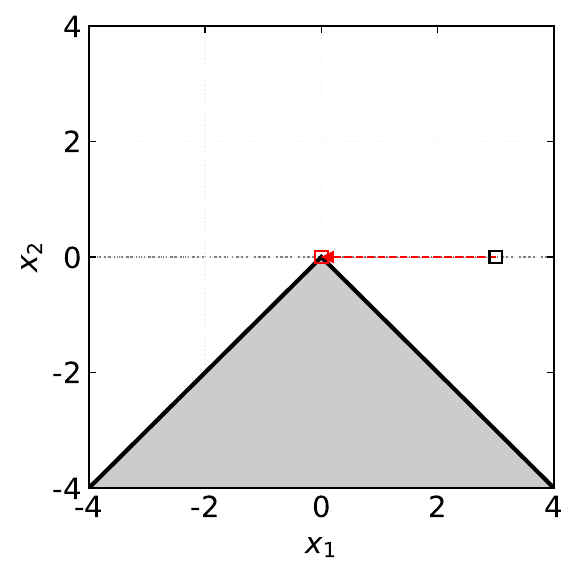}
        }~%
        \subcaptionbox{
            \comm{\vspace{-1em}If all constraints are violated, we solve the inverse problem for $\FOA$ (hatched).}
            \label{fig:gio_adg_infeas_spB}
        }[0.49\linewidth]{%
                \includegraphics[width=0.343\linewidth]{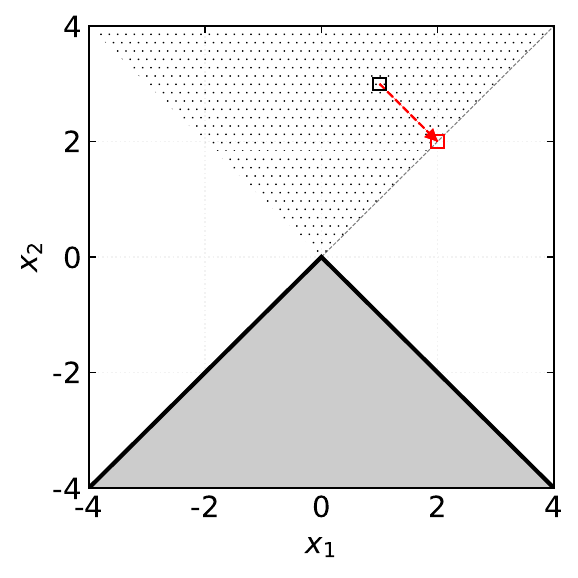}
        }
        \label{fig:gio_adg_infeas_sp}
\end{figure}

Proposition~\ref{propn:gioa_infeas_sol} provides geometric insights regarding the structure of optimal solutions. In objective space inverse optimization, all points that lie on a level set of a cost vector yield the same duality gap. Recall that the hyperplane $\set{H} = \left\{ \bx \mid \bc^{*\tpose} \bx = \bb^\tpose \by^*  \right\}$ is a supporting hyperplane of $\feas$, or in other words, a level set of the cost vector with zero duality gap. If $\bhx \notin \feas$ but satisfies $\ba_i^\tpose \bhx > b_i$ for some $i$, then there always exists a supporting hyperplane that intersects with $\bhx$ (e.g., Figure~\ref{fig:gio_adg_infeas_spA}). If $\bA \bhx \leq \bb$, then no such supporting hyperplane exists. 
However, consider the alternate forward problem $\FOA := \displaystyle\min_\bx \left\{ -\bc^\tpose \bx \mid \bA \bx \leq \bb \right\}$ obtained by reversing the signs of all constraints and the cost vector. The single-point inverse problem for $\bhx$ and $\FOA$ is equivalent to the original problem. Since $\bhx$ is feasible for $\FOA$, Theorem~\ref{thm:gio_sp_sol} applies for $\GIOa{\{\bhx\}}$. Geometrically, the constraints of $\FOA$ describe the nearest supporting hyperplanes of $\FO{\bc}$. Solving one problem solves the other (e.g., Figure~\ref{fig:gio_adg_infeas_spB}, where $\bhx$ projects to an infeasible point for $\FO{\bc}$ with no duality gap). 
\comm{
We use this insight to extend Proposition~\ref{propn:gioa_infeas_sol} Statement 2 for multiple infeasible decisions. If all data points violate all constraints, then the multi-point problem reduces to a single-point.
}

\if 0 
        \begin{ex}
                $\mathbf{FO}(\bc): \underset{\bx}{\min} \{c_1 x_1 + c_2 x_2 \; | \; x_1 - x_2 \geq 0, -x_1 - x_2 \geq 0 \}$ and a singleton data set $\{\bhx\}$. 
                In Fig.~\ref{fig:gio_adg_infeas_spA},
                $\bhx = ( 3, 0 )$, and in Fig.~\ref{fig:gio_adg_infeas_spB}, 
                $\bhx = ( 1, 3)$. We solve $\GIOa{\{\bhx\}}$ for both cases. Note that in this work, we use $\nnorm{\cdot} = \norm{\cdot}_1$ to normalize the cost vector in all examples. For case (a), the constraint $x_1 - x_2 \geq 0$ is satisfied. The optimal solution is $\left(\bc^*, \by^* \right) = \left( (0, -1), (0.71, 0.71) \right)$. We observe that a convex combination of the two constraints of the forward problem is used to produce a supporting hyperplane that yields zero duality gap. That is, $\bhx$ lies on a supporting hyperplane, and the objective value associated with $\bhx$ is equal to the objective value of an optimal $\bx^* = (0,0)$. 

                In contrast, neither constraint is satisfied for case (b). However, we can create an alternative feasible set (hatched) and a corresponding forward problem. Solving the new inverse problem allows us to obtain an optimal solution $\left(\bc^*, \by^*\right) = \left( (0.5, -0.5), (0.71, 0) \right)$.
        \end{ex}
\fi  

\begin{corollary}
        \label{cor:gioa_infeas_sol_mp}
        Suppose that $\bA \bhx_q \leq \bb$ for all $q \in \set{Q}$, and $\dataset \subset \field{R}^n \setminus \feas$. Let $\bbx$ be the centroid of $\dataset$. Then, $\GIOa{\dataset}$ for the forward problem $\FO{\bc}$ is equivalent to $\GIOa{\{\bbx\}}$ for $\FOA$.
\end{corollary}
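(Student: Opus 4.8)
The plan is to obtain the claim by composing two equivalences. First, for an \emph{arbitrary} data set $\dataset$, I would show that $\GIOa{\dataset}$ posed for the forward problem $\FO{\bc}$ is equivalent to $\GIOa{\dataset}$ posed for the sign-reversed problem $\FOA$ (whose forward constraint data are $-\bA$ and $-\bb$). Second, I would use the hypothesis $\bA\bhx_q \le \bb$ for all $q \in \set{Q}$ --- which says precisely that $\dataset$ lies in the feasible region $\{\bx \mid -\bA\bx \ge -\bb\}$ of $\FOA$ --- to collapse $\dataset$ to its centroid $\bbx$ via the argument behind Proposition~\ref{propn:gioa_optimal} applied to $\FOA$. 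Chaining the two yields that $\GIOa{\dataset}$ for $\FO{\bc}$ is equivalent to $\GIOa{\{\bbx\}}$ for $\FOA$.

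For the first equivalence, I would exhibit the explicit involution $(\bc,\by,\epsilon_1,\dots,\epsilon_Q) \mapsto (-\bc,\by,-\epsilon_1,\dots,-\epsilon_Q)$ between the feasible regions of the two problems. Writing out $\GIOa{\dataset}$ for $\FOA$, one checks that $\bA^\tpose\by = \bc$ is equivalent to $(-\bA)^\tpose\by = -\bc$; that $\by \ge \bzero$ is unchanged; that each strong-duality constraint $\bc^\tpose\bhx_q = \bb^\tpose\by + \epsilon_q$ becomes $(-\bc)^\tpose\bhx_q = (-\bb)^\tpose\by + (-\epsilon_q)$; and that $\nnorm{-\bc} = \nnorm{\bc} = 1$. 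Since $|-\epsilon_q| = |\epsilon_q|$, the objective $\sum_{q=1}^Q |\epsilon_q|$ is preserved, so the map is an objective-preserving bijection; in particular the two problems share an optimal value and have optimal solutions in correspondence. This is the multi-point counterpart of the single-point identification already invoked in Proposition~\ref{propn:gioa_infeas_sol}(2), and it uses no assumption on the $\bhx_q$.

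For the second equivalence, first note that $\bbx$ is feasible for $\FOA$ because $\bA\bbx = \frac{1}{Q}\sum_{q=1}^Q \bA\bhx_q \le \bb$. Then, running the argument of Proposition~\ref{propn:gioa_optimal} with $\FOA$ in place of $\FO{\bc}$: for any feasible $(\bc',\by',\epsilon_1',\dots,\epsilon_Q')$ of $\GIOa{\dataset}$ for $\FOA$, weak duality together with $\bhx_q$ being feasible for $\FOA$ forces $\epsilon_q' \ge 0$ for every $q$, so $\sum_{q=1}^Q \epsilon_q' = Q(\bc'^\tpose\bbx + \bb^\tpose\by')$, which equals $Q$ times the duality gap at $\bbx$. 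Hence minimizing $\sum_{q=1}^Q |\epsilon_q'| = \sum_{q=1}^Q \epsilon_q'$ is exactly $\GIOa{\{\bbx\}}$ for $\FOA$. Only weak duality and convexity of the feasible set are used here, so $\FOA$ need not inherit the standing regularity assumptions; composing with the first equivalence proves the corollary.

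The main obstacle is essentially bookkeeping in the first step: checking that the two sign reversals --- on the constraint data $(\bA,\bb)$ and on the cost vector $\bc$ --- cancel in the dual-feasibility relation $\bA^\tpose\by=\bc$ while leaving $\by\ge\bzero$ and the normalization $\nnorm{\bc}=1$ intact, so that only the scalar gaps $\epsilon_q$ change sign and the objective is untouched. A secondary subtlety, which the weak-duality phrasing of the second step deliberately avoids, is that $\FOA$ need not be full-dimensional or free of redundant constraints; routing through the \emph{proof} of Proposition~\ref{propn:gioa_optimal} rather than its statement sidesteps this.
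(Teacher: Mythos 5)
Your proof is correct and follows essentially the same route as the paper's: pass to the sign-reversed forward problem $\FOA$, then collapse the (now feasible) data set to its centroid via the argument of Proposition~\ref{propn:gioa_optimal}. The only difference is presentational --- you phrase the first step as an unconditional objective-preserving bijection $(\bc,\by,\bepsilon)\mapsto(-\bc,\by,-\bepsilon)$, whereas the paper first notes $\epsilon_q\le 0$ and invokes the argument of Proposition~\ref{propn:gioa_infeas_sol} Part 2; both are valid and lead to the same reduction.
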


\subsubsection{Relative duality gap.}~%
\label{sec:giordg}
The relative duality gap variant minimizes the sum of the ratios between the duality gap for each decision and the imputed dual optimal value for the forward problem:
\begin{subequations}
        \label{eq:gior}
        \begin{align}
                \GIOr{\dataset}: \quad \minimize{\bc, \by, \epsilon_1,\dots,\epsilon_Q} \quad    & \sum_{q=1}^Q \left| \epsilon_q - 1 \right| \label{eq:gior1} \\
                \subjectto \quad    & \bA^\tpose \by = \bc, \enskip \by \geq \bzero \label{eq:gior2} \\
                & \bc^\tpose \bhx_q = \epsilon_q \bb^\tpose \by, \quad \forall q \in \set{Q} \label{eq:gior3} \\
                & \nnorm{\bc} = 1. \label{eq:gior5}
        \end{align}
\end{subequations}
%
%
\noindent Duality gap ratio variables $\epsilon_q$ replace the perturbation vectors used in the general formulation $\GMIO{\dataset}$. These variables are well-defined except when the imputed forward problem has an optimal value of $0$. In this subsection, we assume $\bb \neq \bzero$. 
\comm{
Furthermore, note that if $\bb^\tpose \by = \bzero$ for feasible $\by$, then $\epsilon_q$ are free variables; in this case, we assume $\epsilon_q := 1$ for all $q$. 
}
%
%
First, we show $\GIOr{\dataset}$ can be recovered from $\GMIO{\dataset}$ with appropriate hyperparameters.
\begin{proposition}
        \label{propn:gior_specialization}
        Let $\cancelvec{\bc}$ be a function that satisfies $\norm{\cancelvec{\bc}}_\infty = 1$ and $\cancelvec{\bc}^\tpose \bc = 1$ for all $\bc$. A solution $\left( \bc^*, \by^*, \epsilon_1^*, \dots, \epsilon_Q^* \right)$ for which $\bb^\tpose \by^* \neq 0$, is optimal to $\GIOr{\dataset}$ if and only if $\left( \bc^*, \by^*, \bb^\tpose \by^* \left( \epsilon_1^* - 1 \right)\cancelvec{\bc^*}, \dots, \bb^\tpose \by^* \left( \epsilon_Q^* - 1 \right)\cancelvec{\bc^*} \right)$ is optimal to $\GMIO{\dataset}$ with hyperparameters 
        \begin{align*}
        \left( \norm{\cdot}, \nnorm{\cdot}, \set{C}, \set{E}_1, \dots, \set{E}_Q \right) = \left( \norm{\cdot}_\infty/|\bb^\tpose \by^*|, \nnorm{\cdot}, \field{R}^n, \left\{ \bb^\tpose \by^* \left( \epsilon_1 -1 \right)\cancelvec{\bc^*} \right\}, \dots, \left\{ \bb^\tpose \by^* \left( \epsilon_Q - 1 \right)\cancelvec{\bc^*} \right\} \right).
        \end{align*}
\end{proposition}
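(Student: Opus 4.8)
The plan is to mirror the argument behind Proposition~\ref{propn:gioa_specialization}: under the stated hyperparameters I will exhibit a change of variables that puts the feasible solutions of $\GMIO{\dataset}$ in bijection with those of $\GIOr{\dataset}$ while preserving objective values, so that optimality transfers in both directions. Throughout, $\cancelvec{\cdot}$ is read as the given function with $\norm{\cancelvec{\bc}}_\infty = 1$ and $\cancelvec{\bc}^\tpose\bc = 1$ for every $\bc$, and the superscript $^*$ in the statement simply records this function (and the scalar $\bb^\tpose\by$) evaluated at the solution under consideration.

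Concretely, I would pair a solution of $\GIOr{\dataset}$ with $\bb^\tpose\by \neq 0$ to a solution of $\GMIO{\dataset}$ through $\bepsilon_q = (\bb^\tpose\by)(\epsilon_q - 1)\,\cancelvec{\bc}$. The first step is to verify that this substitution turns each constraint of $\GMIO{\dataset}$ into its counterpart in $\GIOr{\dataset}$: constraints~\eqref{eq:gio2} and~\eqref{eq:gio5} are common to both models; the restriction $\set{C} = \field{R}^n$ is vacuous; membership $\bepsilon_q \in \set{E}_q$ is precisely the requirement that $\bepsilon_q$ have the displayed form; and plugging the substitution into the bilinear constraint~\eqref{eq:gio3}, the identity $\cancelvec{\bc}^\tpose\bc = 1$ collapses $\bc^\tpose\bepsilon_q$ to $(\bb^\tpose\by)(\epsilon_q - 1)$, so~\eqref{eq:gio3} becomes $\bc^\tpose\bhx_q = (\bb^\tpose\by)\epsilon_q$, i.e.~\eqref{eq:gior3}. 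The second step is the objective: because $\norm{\cancelvec{\bc}}_\infty = 1$ and the chosen norm is $\norm{\cdot}_\infty/|\bb^\tpose\by|$, each term equals $\norm{\bepsilon_q}_\infty/|\bb^\tpose\by| = |\epsilon_q - 1|$, so the $\GMIO$ objective coincides with $\sum_{q}|\epsilon_q - 1|$. The third step is invertibility: given a $\GMIO{\dataset}$-feasible tuple, the constraint $\bepsilon_q \in \set{E}_q$ lets us read off the scalar $\epsilon_q$ (unambiguously, since $\bb^\tpose\by \neq 0$ and $\cancelvec{\bc} \neq \bzero$), after which the previous computations run backwards; the degenerate case $\bb^\tpose\by = 0$ is absorbed by the stated convention $\epsilon_q := 1$, which corresponds to $\bepsilon_q = \bzero$, so no feasible point is dropped on either side. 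With a one-to-one, objective-preserving correspondence in hand, optimal solutions match, and inverting the substitution recovers $\epsilon_q^*$ from $\bepsilon_q^*$, giving both implications.

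The main obstacle, compared with the absolute-gap case, is the bookkeeping tied to the normalization scalar $\bb^\tpose\by$: it has to appear consistently in the perturbation set $\set{E}_q$ and in the scaled objective norm so that the cancellation in the constraint and the cancellation in the objective occur simultaneously, and one must keep the non-convexity coming from $\nnorm{\bc}=1$ cleanly separate from the bilinearity that the substitution removes. I would take particular care to confirm that the standing assumption $\bb \neq \bzero$, the hypothesis $\bb^\tpose\by^* \neq 0$, and the convention for $\bb^\tpose\by = 0$ together make the ratio variables $\epsilon_q$ and the whole correspondence well-defined, and that restricting to solutions with $\bb^\tpose\by^* \neq 0$ does not discard an optimal solution of either problem.
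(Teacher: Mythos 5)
Your proposal is correct and follows essentially the same route as the paper's proof: substitute $\bepsilon_q = \bb^\tpose\by(\epsilon_q-1)\cancelvec{\bc}$, use $\cancelvec{\bc}^\tpose\bc=1$ to reduce the strong-duality constraint~\eqref{eq:gio3} to~\eqref{eq:gior3} and $\norm{\cancelvec{\bc}}_\infty=1$ together with the scaled norm to match the objectives, and conclude via an objective-preserving correspondence between feasible sets. Your additional remarks on invertibility and on the $\bb^\tpose\by=0$ convention are consistent with the paper, which relegates the degenerate case to a separate remark.
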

%
%

\textbf{General solution method.~}%
Unlike the absolute duality gap problem, which is non-convex only because of the normalization constraint, $\GIOr{\dataset}$ possesses an additional non-convexity due to a bilinear term in the duality gap constraint~\eqref{eq:gior3}. We first address the bilinearity by introducing three sub-problems. We then use polyhedral decomposition to address the normalization constraint.

\begin{proposition}\label{propn:gior_sol}
        Consider the following three problems:\\
        \noindent\begin{minipage}{.32\linewidth}
                \begin{align}
                        \begin{split}
                                &\GIOrp{\dataset; K}: \\
                                \min_{\substack{\bc, \by, \\ \epsilon_1, \dots, \epsilon_Q}} \quad&  \sum_{q=1}^Q | \epsilon_q - 1 | \\
                                \st \quad\quad&  \bA^\tpose \by = \bc, \enskip \by \geq \bzero \\
                                &\bc^\tpose \bhx_q = \epsilon_q, \forall q \in \set{Q} \;\;   \\
                                &\bb^\tpose \by = 1 \\
                                &\nnorm{\bc} \geq K,
                        \end{split} \label{eq:gior_reform_positive}
                \end{align}
        \end{minipage}%
        \begin{minipage}{.35\linewidth}
                \begin{align}
                        \begin{split}
                                &\GIOrn{\dataset; K}: \\
                                \min_{\substack{\bc, \by, \\ \epsilon_1, \dots, \epsilon_Q}} \quad& \sum_{q=1}^Q | \epsilon_q - 1 | \\
                                \st \quad\quad &\bA^\tpose \by = \bc, \enskip \by \geq \bzero \\
                                & \bc^\tpose \bhx_q = -\epsilon_q, \forall q \in \set{Q} \;\;   \\
                                & \bb^\tpose \by = -1 \\
                                & \nnorm{\bc} \geq K,
                        \end{split}\label{eq:gior_reform_negative}
                \end{align}
        \end{minipage}
        \begin{minipage}{.31\linewidth}
                \begin{align}
                        \begin{split}
                                &\GIOrz{\dataset; K}: \\
                                \min_{\bc, \by} \quad& 0 \phantom{\sum_{q=1}^Q | \epsilon_q |}  \\
                                \st \quad& \bA^\tpose \by = \bc, \enskip \by \geq \bzero \\
                                &\bc^\tpose \bhx_q = 0, \forall q \in \set{Q}\;\;  \\
                                &\bb^\tpose \by = 0,  \by^\tpose \bone = 1 \\
                                &\nnorm{\bc} \geq K.
                        \end{split}\label{eq:gior_reform_zero}
                \end{align}
        \end{minipage}
        \hfill \\ \\
        \noindent Let $z^+$ be the optimal value of $\GIOrp{\dataset;K}$ if it is feasible, otherwise $z^+ = \infty$. Let $z^-$ and $z^0$ be defined similarly for $\GIOrn{\dataset;K}$ and $\GIOrz{\dataset;K}$, respectively. Let $z^* = \min\left\{ z^+, z^-, z^0 \right\}$ and let $\left(\bc^*, \by^*,\epsilon_1^*,\dots,\epsilon_Q^*\right)$ be an optimal solution for the corresponding problem. 
        We assume $\epsilon^*_1 = \cdots = \epsilon^*_Q = 1$ for $\GIOrz{\dataset;K}$.  Then there exists $K$ such that the optimal value of $\GIOr{\dataset}$ is equal to $z^*$ and an optimal solution to $\GIOr{\dataset}$ is $\left( \bc^*/\nnorm{\bc^*}, \by^*/\nnorm{\bc^*}, \epsilon_1^*,\dots,\epsilon_Q^* \right)$.
\end{proposition}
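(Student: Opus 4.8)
The plan is to remove the two non-convexities of $\GIOr{\dataset}$ separately: first the bilinear term $\epsilon_q\,\bb^\tpose\by$ in constraint~\eqref{eq:gior3}, then the normalization~\eqref{eq:gior5}. The key preliminary observation is that the duality-gap ratios $\epsilon_q$, and therefore the objective~\eqref{eq:gior1}, are unchanged under the positive rescaling $(\bc,\by)\mapsto(t\bc,t\by)$, because both sides of~\eqref{eq:gior3} are homogeneous of degree one in $(\bc,\by)$, constraint~\eqref{eq:gior2} is homogeneous, and~\eqref{eq:gior1} does not involve $(\bc,\by)$. Hence $\nnorm{\bc}=1$ merely picks one representative per ray of feasible $(\bc,\by)$, and I can replace it by $\nnorm{\bc}\ge K$ for a small $K>0$ — a constraint whose only job is to forbid the ray $\bc=\bzero$ — and restore the original normalization at the end by dividing the chosen $(\bc,\by)$ through by $\nnorm{\bc}$.

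To linearize~\eqref{eq:gior3} I would partition the feasible solutions of $\GIOr{\dataset}$ according to the sign of the imputed optimal dual value $\bb^\tpose\by$. If $\bb^\tpose\by>0$, rescaling by $1/\bb^\tpose\by$ produces a feasible solution with $\bb^\tpose\by=1$, $\bc^\tpose\bhx_q=\epsilon_q$, and the same objective value, i.e.\ a feasible point of $\GIOrp{\dataset;K}$; conversely, dividing any feasible point of $\GIOrp{\dataset;K}$ by $\nnorm{\bc}$ (legitimate since $\nnorm{\bc}\ge K>0$) returns a feasible point of $\GIOr{\dataset}$ with $\bb^\tpose\by>0$ and the same objective. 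The case $\bb^\tpose\by<0$ is symmetric under the scaling $-1/\bb^\tpose\by$ and yields $\GIOrn{\dataset;K}$. When $\bb^\tpose\by=0$, constraint~\eqref{eq:gior3} forces $\bc^\tpose\bhx_q=0$ for all $q$, each $\epsilon_q$ is set to $1$ by convention, and the objective equals $0$; here the appropriate normalization is on $\by$ rather than on $\bc$ — since $\by\ge\bzero$ and $\by\ne\bzero$ whenever $\bc\ne\bzero$ — which is exactly the role of $\by^\tpose\bone=1$ in $\GIOrz{\dataset;K}$, and compactness of the simplex $\{\by\ge\bzero:\by^\tpose\bone=1\}$ makes $\GIOrz{\dataset;K}$ feasible for all small $K>0$ precisely when a $\bb^\tpose\by=0$ solution of $\GIOr{\dataset}$ with $\bc\ne\bzero$ exists. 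Assembling the three cases: for any $K>0$, every feasible point of each subproblem rescales as above to a feasible point of $\GIOr{\dataset}$ with equal objective, so $z^+,z^-,z^0$ are each at least the optimal value of $\GIOr{\dataset}$; and each subproblem is solvable exactly (for instance by polyhedral decomposition of $\nnorm{\bc}\ge K$, exactly as in the absolute-gap variant) and attains its optimum, so rescaling the optimizer of whichever subproblem achieves $z^*=\min\{z^+,z^-,z^0\}$ gives a feasible point of $\GIOr{\dataset}$ of value $z^*$. That point is the displayed tuple $(\bc^*/\nnorm{\bc^*},\by^*/\nnorm{\bc^*},\epsilon_1^*,\dots,\epsilon_Q^*)$, and verifying it satisfies~\eqref{eq:gior2}--\eqref{eq:gior5} (in particular $\bb^\tpose\by\ne0$ in the $z^+,z^-$ cases) is a direct substitution.

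The step I expect to be the main obstacle is showing that a single $K$ can be chosen so that the two inequalities above meet — equivalently, that the optimal value of $\GIOr{\dataset}$ is attained, and attained at a solution whose $|\bb^\tpose\by|$ is not too large. Rescaling a $\GIOr{\dataset}$ solution with $|\bb^\tpose\by|$ large yields $\nnorm{\bc}=1/|\bb^\tpose\by|$ small, which $\nnorm{\bc}\ge K$ can exclude, so $K$ must be small enough that no value-carrying solution is lost. For $\bb^\tpose\by>0$ this is easy: full-dimensionality of $\feas$ (so $\feas\ne\emptyset$) and LP strong duality give $\bb^\tpose\by\le\min_{\bx\in\feas}\bc^\tpose\bx\le\bc^\tpose\bx_0$ for any fixed $\bx_0\in\feas$, which is uniformly bounded over $\nnorm{\bc}=1$ by equivalence of norms, so any $K$ below that reciprocal bound works. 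The delicate branch is $\bb^\tpose\by<0$: the dual cone $\{\by\ge\bzero:\bA^\tpose\by=\bzero\}$ may be nontrivial, and full-dimensionality of $\feas$ then forces $\bb^\tpose\by_0<0$ on every nonzero such $\by_0$, so $\bb^\tpose\by$ is unbounded below and no single reduction captures all negative-$\bb^\tpose\by$ solutions simultaneously. I would resolve this by noting that moving $\by$ along such a recession direction leaves $\bc$ fixed while sending every $\epsilon_q=\bc^\tpose\bhx_q/\bb^\tpose\by\to 0$, pushing the objective toward the trivial value $\sum_q 1=Q$; hence solutions with very negative $\bb^\tpose\by$ are never uniquely optimal, $\min\{z^+(K),z^-(K),z^0\}$ is monotone as $K\downarrow 0$ and stabilizes at the optimal value of $\GIOr{\dataset}$, and that optimal value is attained — either at a solution with $\bb^\tpose\by$ of controlled magnitude (handled by the rescaling) or, in the extreme case where the value equals $Q$, at an explicit solution with all $\epsilon_q=0$ or via $\GIOrz{\dataset;K}$. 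Making this last part fully rigorous, and thereby justifying the existential claim on $K$, is where the real work lies.
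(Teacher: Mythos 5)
Your three-way decomposition by the sign of $\bb^\tpose\by$, the rescaling maps $(\bc,\by)\mapsto(\bc,\by)/|\bb^\tpose\by|$ (resp.\ $/\by^\tpose\bone$) in one direction and $(\bc,\by)\mapsto(\bc,\by)/\nnorm{\bc}$ in the other, and the observation that these preserve the objective value are exactly the paper's argument. The gap is in the final step, which you yourself flag as ``where the real work lies'': you try to exhibit a $K>0$ that is \emph{uniformly} valid, i.e.\ small enough that no feasible solution of $\GIOr{\dataset}$ carrying the optimal value is excluded, and this drags you into a genuinely delicate analysis of the recession cone $\{\by\ge\bzero : \bA^\tpose\by=\bzero\}$ and a non-rigorous ``solutions with very negative $\bb^\tpose\by$ are never uniquely optimal / the value stabilizes as $K\downarrow 0$'' argument. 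None of that is needed, because the proposition only asserts that \emph{some} $K$ works. The paper's resolution is to let $(\bhc,\bhy)$ be an optimal solution of $\GIOr{\dataset}$ (so $\nnorm{\bhc}=1$) and \emph{define} $K=1/|\bb^\tpose\bhy|$ if $\bb^\tpose\bhy\ne 0$ and $K=1/\bhy^\tpose\bone$ otherwise. With this choice the rescaled optimal solution satisfies $\nnorm{\btc}=K$ exactly, hence is feasible for the corresponding subproblem, giving $z^*\le z^*(\GIOr{\dataset})$; the reverse inequality holds for \emph{every} $K>0$ by your own (correct) backward rescaling, since $\nnorm{\bc}\ge K>0$ rules out $\bc=\bzero$. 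That closes the existence claim in one line.

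Two smaller remarks. First, the harder question you were actually attacking --- a $K$ computable \emph{without} knowing the optimal solution --- is treated separately in the paper's Electronic Companion (the auxiliary problem~\eqref{eq:gior_K_formulation} and Theorem~\ref{thm:gior_K_sol}), where one maximizes $\max\{|\bb^\tpose\by|,\by^\tpose\bone\}$ over the normalized dual cone and uses the fact that shrinking $K$ only enlarges the subproblems' feasible regions; your instinct that this is the substantive computational issue is right, but it is not part of Proposition~\ref{propn:gior_sol}. Second, both your argument and the paper's implicitly assume the optimum of $\GIOr{\dataset}$ is attained; you raise this but do not settle it, and the paper simply takes it for granted, so you are not behind the paper on that point.
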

\comm{
Proposition~\ref{propn:gior_sol} breaks $\GIOr{\dataset}$ into three cases: $\bb^\tpose \by > 0$, $\bb^\tpose \by < 0$, and $\bb^\tpose \by = 0$. We then normalize $\bb^\tpose \by$ and relax the cost vector normalization constraint~\eqref{eq:gior5}. When $\bb^\tpose \by = \bzero$ for $\GIOrz{\dataset}$, the $\epsilon_q$ terms become free variables, motivating $\epsilon_q = 1$ for all $q$ in order to obtain an objective function $0$.
}

There are two issues to note. First, Proposition~\ref{propn:gior_sol} requires the selection of an appropriate value for the parameter $K$, which can be accomplished by solving an auxiliary problem (see~\ref{sec:ec_gior_K} for details). Second, formulations \eqref{eq:gior_reform_positive}, \eqref{eq:gior_reform_negative}, and \eqref{eq:gior_reform_zero} 
are still non-convex due to the normalization constraint $\nnorm{\bc} \geq K$. As in $\GIOa{\dataset}$, this can be addressed via polyhedral decomposition. 
For example, if $\nnorm{\cdot} = \norm{\cdot}_\infty$, $\GIOrp{\dataset;K}$ decomposes to $2n$ linear programs $\GIOrp{\dataset;K, j}$ (see Theorem~\ref{thm:gio_reform}):
\begin{align}
        \begin{split}
                \GIOrpp{\dataset;K, j}:\quad \minimize{\bc, \by, \epsilon_1, \dots, \epsilon_Q} \quad  & \sum_{q=1}^Q | \epsilon_q - 1 | \\
                \subjectto \quad    & \bA^\tpose \by = \bc, \enskip \by \geq \bzero \\
                & \bc^\tpose \bhx_q = \epsilon_q, \quad \forall q \in \set{Q} \\
                & \bb^\tpose \by = 1 \\
                & \left( c_j \geq K \right) \vee \left( c_j \leq -K \right).
        \end{split} \label{eq:giorp_decompose}
\end{align}
%

\comm{
A complete algorithm for solving $\GIOr{\dataset}$ exactly in this assumption-free setting is provided in \ref{sec:ec_gior_K}. 
We briefly remark here that an alternative approach is to relax the normalization constraints in formulations \eqref{eq:gior_reform_positive}, \eqref{eq:gior_reform_negative}, and \eqref{eq:gior_reform_zero}. If solving the relaxations yields an optimal $\bc^*\neq \bzero$, then this cost vector can be re-scaled as in Proposition~\ref{propn:gior_sol} (see Corollary~\ref{cor:gior_sol_nonzero} in the companion). 
}



\textbf{Feasible observed decisions.~}
%
%
As in the absolute duality gap case, the relative duality gap model reduces to a single-point problem, which has an analytic solution according to Theorem~\ref{thm:gio_sp_sol}. 
\begin{proposition}
        \label{propn:gior_optimal}
        If $\dataset \subset \feas$ and $\bar{\bx}$ is the centroid of $\dataset$, $\GIOr{\dataset}$ is equivalent to $\GSIOr{\bar{\bx}}$.
\end{proposition}
%
%

\textbf{Infeasible observed decisions.~}
%
Proposition~\ref{propn:gior_infeas_sol} is analogous to Proposition~\ref{propn:gioa_infeas_sol}, and yields an analytic solution for $\GIOr{\{\bhx\}}$ if $\bhx \notin \set{P}$. 
\comm{Corollary~\ref{cor:gior_infeas_sol_mp} extends Proposition~\ref{propn:gior_infeas_sol} Statement 2 to multiple decisions similar to Corollary~\ref{cor:gioa_infeas_sol_mp} with Proposition~\ref{propn:gioa_infeas_sol}.} The proofs (omitted) are similar to before. 
\begin{proposition}
        \label{propn:gior_infeas_sol}
        Assume $\bhx \notin \feas$.
        \begin{enumerate}
                \item If $\bhx$ satisfies $\ba_i^\tpose \bhx > b_i$ for some $i \in \set{I}$, then there exists $i^* \in \set{I}$ such that~\eqref{eq:gioa_sp_infeas_sol} is an optimal solution to $\GIOr{\{\bhx\}}$ and the optimal value is 0.

        \item If $\bA \bhx \leq \bb$, there exists $i^* \in \set{I}$ such that~\eqref{eq:gio_sp_sol} is an optimal solution to $\GIOr{\{\bhx\}}$.         \end{enumerate}
\end{proposition}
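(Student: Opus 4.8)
The plan is to follow the proof of Proposition~\ref{propn:gioa_infeas_sol} almost verbatim, the only new ingredient being that the relative objective $\sum_q |\epsilon_q - 1|$ is bounded below by $0$ with equality exactly when each $\bhx_q$ has zero (absolute) duality gap with respect to the imputed $(\bc,\by)$. Since $|\epsilon - 1| \ge 0$ always, for \emph{both} parts it suffices to exhibit a feasible $(\bc,\by)$ (satisfying \eqref{eq:gior2} and the normalization \eqref{eq:gior5}) for which $\bc^\tpose\bhx = \bb^\tpose\by$: then constraint \eqref{eq:gior3} forces $\epsilon = 1$ (or $\epsilon := 1$ by the stated convention when $\bb^\tpose\by = 0$), the objective equals $0$, and optimality is immediate. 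So the relative case inherits the same constructions as the absolute case, with the scalar gap variable reinterpreted as a ratio.

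For Part~1, I would take exactly the two-component dual vector $\bty$ of \eqref{eq:gioa_sp_infeas_sol}: given $i$ with $\ba_i^\tpose\bhx > b_i$, and (since $\bhx \notin \feas$) choosing $i^*$ with $\ba_{i^*}^\tpose\bhx < b_{i^*}$, set $\tilde y_i = 1/(\ba_i^\tpose\bhx - b_i) > 0$, $\tilde y_{i^*} = 1/(b_{i^*} - \ba_{i^*}^\tpose\bhx) > 0$, all other entries $0$, and $\btc = \bA^\tpose\bty$. Then $\bty \ge \bzero$ and \eqref{eq:gior2} hold by construction, and the short computation $\btc^\tpose\bhx - \bb^\tpose\bty = \tilde y_i(\ba_i^\tpose\bhx - b_i) + \tilde y_{i^*}(\ba_{i^*}^\tpose\bhx - b_{i^*}) = 1 - 1 = 0$ shows the duality gap vanishes, so $\epsilon = 1$. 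Rescaling $(\btc,\bty)$ by $\nnorm{\btc}$ restores \eqref{eq:gior5} without affecting feasibility, and the objective value is $0$, which is optimal.

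For Part~2, I would reuse the reversed forward problem $\FOA = \min_\bx\{-\bc^\tpose\bx \mid \bA\bx \le \bb\}$ introduced after Proposition~\ref{propn:gioa_infeas_sol}. The one point to check is that the single-point relative inverse problem for $\bhx$ and $\FOA$ coincides with $\GIOr{\{\bhx\}}$ for $\FO{\bc}$: under the simultaneous sign reversal $\bc \mapsto -\bc$, $(\bA,\bb)\mapsto(-\bA,-\bb)$ (keeping $\by$), both $\bc^\tpose\bhx$ and $\bb^\tpose\by$ flip sign, so the ratio $\epsilon$ — hence $|\epsilon - 1|$ — is unchanged. Because $\bA\bhx \le \bb$ makes $\bhx$ feasible for $\FOA$, the analytic single-point result (Theorem~\ref{thm:gio_sp_sol}, which as noted in Section~\ref{sec:giordg} supplies the solution for the relative single-point model at a feasible point) applies and yields an optimal solution of the form \eqref{eq:gio_sp_sol} with $i^*$ indexing a constraint of $\FOA$; translating that index back to the constraints of $\FO{\bc}$ (absorbing the sign flip into $\bc^*$) gives the claimed solution. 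Corollary~\ref{cor:gior_infeas_sol_mp} then follows by replacing $\dataset$ with its centroid exactly as in Corollary~\ref{cor:gioa_infeas_sol_mp}.

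The step I expect to be the main obstacle is the same degeneracy already present in the absolute case: guaranteeing $\btc = \bA^\tpose\bty \ne \bzero$ so that normalization by $\nnorm{\btc}$ is well-defined. When $\ba_i$ and $\ba_{i^*}$ are linearly independent this is automatic; the only delicate situation is when every violated-constraint normal is antiparallel to $\ba_i$, and there one must invoke the no-redundant-constraints and full-dimensionality assumptions to see that there is at most one such $i^*$ and that $\tilde y_i \ba_i + \tilde y_{i^*}\ba_{i^*} \ne \bzero$, so a workable $i^*$ always exists. Beyond that, only routine care is needed with the $\epsilon := 1$ convention when $\bb^\tpose\bty = 0$ and with tracking the sign adjustment of $\bc^*$ in Part~2.
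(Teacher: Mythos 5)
Your proposal is correct and follows exactly the route the paper intends: the paper omits this proof, stating only that it parallels Proposition~\ref{propn:gioa_infeas_sol}, and your argument is precisely that parallel — the zero-duality-gap dual vector \eqref{eq:gioa_sp_infeas_sol} forces $\epsilon=1$ and hence objective $0$ in Part~1, and the sign-reversal to $\FOA$ preserves the ratio $\epsilon$ so the feasible-point machinery applies in Part~2. Your extra care about $\bA^\tpose\bty\neq\bzero$ and the $\bb^\tpose\bty=0$ convention goes slightly beyond what the paper records, but does not change the approach.
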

%

%
\begin{corollary}
        \label{cor:gior_infeas_sol_mp}
        Suppose that $\bA \bhx_q \leq \bb$ for all $q \in \set{Q}$ and let $\bbx$ be the centroid of $\dataset$. 
        Then, $\GIOr{\dataset}$ for the forward problem $\FO{\bc}$ is equivalent to $\GIOr{\{\bbx\}}$ for $\FOA$.
\end{corollary}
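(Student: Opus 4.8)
The plan is to prove the claim by composing two equivalences, neither of which is new: one that reduces the ensemble problem posed for $\FOA$ to its single‑point version at the centroid, and one that identifies the inverse problem for $\FO{\bc}$ with the inverse problem for $\FOA$ up to a sign flip of the cost vector. Concretely, since $\bA \bhx_q \leq \bb$ for every $q \in \set{Q}$, each $\bhx_q$ lies in the feasible set $\{\bx \mid \bA\bx \leq \bb\}$ of $\FOA$, so $\dataset$ is entirely feasible for $\FOA$. I would then chain: $\GIOr{\dataset}$ for $\FO{\bc}$ is equivalent to $\GIOr{\dataset}$ for $\FOA$ (sign flip), which is equivalent to $\GIOr{\{\bbx\}}$ for $\FOA$ (Proposition~\ref{propn:gior_optimal} applied with $\FOA$ as the forward model), which is the desired conclusion.

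For the sign‑flip step, write $\FOA$ with constraint data $\tilde\bA = -\bA$, $\tilde\bb = -\bb$ and cost $\tilde\bc = -\bc$. Substituting $(\bc, \by, \epsilon_1, \dots, \epsilon_Q) \mapsto (-\bc, \by, \epsilon_1, \dots, \epsilon_Q)$ into formulation~\eqref{eq:gior} written for $\FO{\bc}$ turns~\eqref{eq:gior2} into $\tilde\bA^\tpose \by = \tilde\bc$, turns~\eqref{eq:gior3} into $\tilde\bc^\tpose \bhx_q = \epsilon_q \tilde\bb^\tpose \by$ (both sides negate), leaves the normalization~\eqref{eq:gior5} invariant because $\nnorm{-\bc} = \nnorm{\bc}$, and leaves the objective $\sum_q |\epsilon_q - 1|$ untouched; the degenerate‑case convention ``$\epsilon_q := 1$ when $\bb^\tpose \by = 0$'' is preserved since $\bb \neq \bzero \iff \tilde\bb \neq \bzero$ and $\bb^\tpose \by = 0 \iff \tilde\bb^\tpose \by = 0$. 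Thus this substitution is a bijection between the feasible sets of the two inverse problems that preserves the objective, so they have equal optimal value and their optimal solutions correspond; this is the same device already used in Part~2 of Proposition~\ref{propn:gior_infeas_sol} and in the discussion following Proposition~\ref{propn:gioa_infeas_sol}. The centroid‑reduction step is then immediate: with $\dataset$ contained in the feasible set of $\FOA$, Proposition~\ref{propn:gior_optimal} (read with $\FOA$ in the role of the generic forward problem) gives the equivalence of $\GIOr{\dataset}$ for $\FOA$ with $\GSIOr{\bbx}$ for $\FOA$, $\bbx$ the centroid of $\dataset$.

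The main obstacle — essentially the only nonroutine point — is legitimizing the use of Proposition~\ref{propn:gior_optimal} for $\FOA$: that proposition, like the rest of the development, presumes the standing assumptions that the feasible set is full‑dimensional and the forward problem has no redundant constraints, and neither property passes automatically from $\feas = \{\bx \mid \bA\bx \geq \bb\}$ to $\{\bx \mid \bA\bx \leq \bb\}$. I would dispatch this exactly as in the proof of the analogous absolute‑gap statement, Corollary~\ref{cor:gioa_infeas_sol_mp}: either carry the needed regularity of $\FOA$ along as an explicit hypothesis, or verify directly that non‑redundancy of the $\FO{\bc}$ constraints together with $\bA \bhx_q \leq \bb$ suffices to run the single‑point argument underlying Proposition~\ref{propn:gior_optimal}. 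Everything else — the bilinear constraint~\eqref{eq:gior3}, the sub‑problem decomposition of Proposition~\ref{propn:gior_sol}, and the degenerate case $\bb^\tpose\by = 0$ — is inherited unchanged from Propositions~\ref{propn:gior_optimal} and~\ref{propn:gior_infeas_sol}, so no further work is required.
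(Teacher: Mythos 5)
Your proof matches the paper's (omitted) argument exactly: the sign-flip reduction to the alternate forward problem $\FOA$ as in Part~2 of Proposition~\ref{propn:gior_infeas_sol}, followed by the centroid reduction of Proposition~\ref{propn:gior_optimal} applied to $\FOA$, under which all $\bhx_q$ are feasible. Your additional remark about verifying that the standing regularity assumptions transfer to $\FOA$ is a fair point of care that the paper glosses over, but it does not change the route.
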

%

\subsection{Decision space}

Inverse optimization in the decision space measures error by distance from optimal decisions, rather than objective values. The model identifies a cost vector that produces optimal decisions for the forward problem that are of minimum aggregate distance to the corresponding observed decisions:
%
%
\begin{subequations}
        \label{eq:giod}
        \begin{align}
                \GIOd{\dataset}: \quad \minimize{\bc, \by, \bepsilon_1,\dots,\bepsilon_Q} \quad    & \sum_{q=1}^Q \norm{\bepsilon_q}_p \\
                \subjectto \quad    & \bA^\tpose \by = \bc, \enskip \by \geq \bzero \label{eq:giod2} \\
                & \bc^\tpose \bhx_q = \bb^\tpose \by + \bc^\tpose \bepsilon_q, \quad \forall q \in \set{Q} \label{eq:giod3} \\
                & \bA \left( \bhx_q - \bepsilon_q \right) \geq \bb, \quad \forall q \in \set{Q} \label{eq:giod4} \\
                & \nnorm{\bc} = 1 \label{eq:giod5}.
        \end{align}
\end{subequations}
%
%
$\GIOd{\dataset}$ resembles $\GMIO{\dataset}$, except that the objective function is the sum of $p$-norms ($p \ge 1$) and constraint~\eqref{eq:giod4} is added to enforce primal feasibility of the perturbed decisions $\bhx_q - \bepsilon_q$. 
\comm{
Unlike in the objective space models, we require primal feasibility because the $\bepsilon_q$ perturbation vectors have a physical meaning as the distance from observed $\bhx_q$ to optimal $\bx^*_q$ decisions. 
}
It is straightforward to show $\GIOd{\dataset}$ is a specialization of $\GMIO{\dataset}$ (proof omitted).
\begin{proposition}
        \label{propn:giod_specialization}
        A solution $\left( \bc^*, \by^*, \bepsilon_1^*, \dots, \bepsilon_Q^* \right)$ is optimal to $\GIOd{\dataset}$ if and only if it is optimal to $\GMIO{\dataset}$ with the following hyperparameters:
%
        $\left( \norm{\cdot}, \nnorm{\cdot}, \set{C}, \set{E}_1, \dots, \set{E}_Q \right) = \left( \norm{\cdot}_p, \nnorm{\cdot}, \field{R}^n, \left\{ \bepsilon_1 \mid \bA \left( \bhx_1 - \bepsilon_1 \right) \geq \bb \right\}, \dots, \left\{ \bepsilon_Q \mid \bA \left( \bhx_Q - \bepsilon_Q \right) \geq \bb \right\} \right)$.
%
\end{proposition}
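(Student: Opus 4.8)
The plan is to show that, once the stated hyperparameters are substituted into $\GMIO{\dataset}$, the resulting problem is \emph{literally} $\GIOd{\dataset}$: the objective functions coincide and the feasible regions coincide, so the two problems have identical sets of optimal solutions, which is exactly the claimed equivalence.

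First I would handle the objective. Setting $\norm{\cdot} = \norm{\cdot}_p$ turns the objective \eqref{eq:gio1} of $\GMIO{\dataset}$, namely $\sum_{q=1}^Q \norm{\bepsilon_q}$, into $\sum_{q=1}^Q \norm{\bepsilon_q}_p$, which is the objective of $\GIOd{\dataset}$. Next I would match the constraints one by one. Constraints \eqref{eq:gio2} and \eqref{eq:gio3} of $\GMIO{\dataset}$ are, verbatim, constraints \eqref{eq:giod2} and \eqref{eq:giod3}; the normalization constraint \eqref{eq:gio5} is \eqref{eq:giod5} (the norm $\nnorm{\cdot}$ is left arbitrary and is the same in both models, so nothing changes here); and $\set{C} = \field{R}^n$ makes $\bc \in \set{C}$ in \eqref{eq:gio7} vacuous, consistent with the absence of any cost-set restriction in $\GIOd{\dataset}$.

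The only step that needs a sentence of justification is the remaining membership constraint. By the stated choice $\set{E}_q = \left\{ \bepsilon_q \mid \bA\left(\bhx_q - \bepsilon_q\right) \geq \bb \right\}$, the condition $\bepsilon_q \in \set{E}_q$ appearing in \eqref{eq:gio7} holds if and only if $\bA\left(\bhx_q - \bepsilon_q\right) \geq \bb$, which is precisely the primal-feasibility constraint \eqref{eq:giod4}, for every $q \in \set{Q}$. Having matched the objective and every constraint, I would conclude that $\GMIO{\dataset}$ under these hyperparameters and $\GIOd{\dataset}$ are the same optimization problem in the variables $(\bc, \by, \bepsilon_1, \dots, \bepsilon_Q)$, so $\left( \bc^*, \by^*, \bepsilon_1^*, \dots, \bepsilon_Q^* \right)$ is optimal for one exactly when it is optimal for the other.

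There is no real obstacle here; the result is a direct term-by-term identification. The only point worth flagging is that, in contrast to the objective-space specializations of Propositions~\ref{propn:gioa_specialization} and~\ref{propn:gior_specialization}, no rescaling of the perturbation variables (and hence no auxiliary $\cancelvec{\bc}$ device) is required: $\GIOd{\dataset}$ already carries vector-valued perturbations $\bepsilon_q \in \field{R}^n$, so the identity map between the two feasible regions suffices, which is why the proof can be omitted.
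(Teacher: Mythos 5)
Your proposal is correct and is exactly the direct term-by-term identification the paper has in mind when it omits this proof as straightforward: with the stated hyperparameters, $\GMIO{\dataset}$ and $\GIOd{\dataset}$ share the same objective and the same feasible region, so their optimal solution sets coincide. Your closing remark — that no rescaling via $\cancelvec{\bc}$ is needed here because the perturbations are already vector-valued — correctly identifies why this specialization, unlike those in Propositions~\ref{propn:gioa_specialization} and~\ref{propn:gior_specialization}, requires no argument beyond the identity map.
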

%


Although $\GIOd{\dataset}$ is non-convex, we show that an optimal cost vector coincides with one of the constraints (e.g., Theorem~\ref{thm:gio_sp_sol}).
However, directly projecting all $\bhx_q$ to a hyperplane may result in projections being infeasible, violating~\eqref{eq:giod4}. Thus, we define the \emph{feasible projection problem}:
\begin{align}
        \begin{split}
                \minimize{\bx} \quad    & \norm{\bhx_q - \bx}_p \\
                \subjectto \quad        & \bA \bx \geq \bb \\
                & \ba_i^\tpose \bx = b_i.
        \end{split} \label{eq:def_fproji}
\end{align}
Let $\fproji{\bhx_q}$ be an optimal solution to problem~\eqref{eq:def_fproji}, which identifies the closest point in $\feas$ to $\bhx_q$ on the hyperplane $\set{H}_i = \left\{ \bx \mid \ba_i^\tpose \bx = b_i \right\}$. We first derive a structured optimal solution to $\GIOd{\dataset}$.
\begin{lemma}
        \label{lem:gio_opt}
        There exists $i \in \set{I}$ such that an optimal solution to $\GIOd{\dataset}$ is given by
        \begin{align}
                \left(\bc^*, \by^*, \bepsilon_1^*,\dots,\bepsilon_Q^*\right) = \left( \frac{\ba_i}{\nnorm{\ba_i}}, \frac{\BFe_i}{\nnorm{\ba_i}}, \bhx_1 - \fproji{\bhx_1},\dots, \bhx_Q - \fproji{\bhx_Q} \right) \label{eq:gio_has_optimal}.
        \end{align}
\end{lemma}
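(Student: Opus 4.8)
The plan is to mirror the argument behind Theorem~\ref{thm:gio_sp_sol}: start from an arbitrary optimal solution of $\GIOd{\dataset}$, identify a single constraint whose hyperplane contains all of the perturbed points, and then show that replacing the cost vector by that constraint's normal — and the perturbations by \emph{feasible} projections onto that hyperplane — produces another feasible solution of $\GIOd{\dataset}$ whose objective is no larger, hence also optimal.

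First I would take any optimal $(\bc,\by,\bepsilon_1,\dots,\bepsilon_Q)$ for $\GIOd{\dataset}$. Constraints~\eqref{eq:giod2}--\eqref{eq:giod3} say that $\by$ is dual feasible for $\FO{\bc}$ and that the primal feasible point $\bhx_q-\bepsilon_q$ (feasible by~\eqref{eq:giod4}) attains the dual objective value $\bb^\tpose\by$; by weak LP duality this forces $\FO{\bc}$ to be bounded and each $\bhx_q-\bepsilon_q$ to be an optimal solution of $\FO{\bc}$. Hence all the points $\bhx_q-\bepsilon_q$ lie on the optimal face $F=\{\bx\in\feas\mid \bc^\tpose\bx=\bb^\tpose\by\}$. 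Since $\nnorm{\bc}=1$ gives $\bc\neq\bzero$ and $\feas$ is full-dimensional, $\bc^\tpose\bx$ is non-constant on $\feas$, so $F$ is a nonempty proper face of $\feas$; by standard polyhedral theory there is an index $i\in\set{I}$ with $F\subseteq\set{H}_i=\{\bx\mid\ba_i^\tpose\bx=b_i\}$, i.e.\ $\ba_i^\tpose(\bhx_q-\bepsilon_q)=b_i$ for every $q\in\set{Q}$.

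Next I would verify that the candidate in~\eqref{eq:gio_has_optimal} for this $i$, namely $(\bc',\by',\bepsilon_1',\dots,\bepsilon_Q'):=\bigl(\ba_i/\nnorm{\ba_i},\ \BFe_i/\nnorm{\ba_i},\ \bhx_1-\fproji{\bhx_1},\dots,\bhx_Q-\fproji{\bhx_Q}\bigr)$, is feasible for $\GIOd{\dataset}$. The feasible projection problem~\eqref{eq:def_fproji} for index $i$ has nonempty feasible region (it contains $\bhx_q-\bepsilon_q\in\set{H}_i\cap\feas$) and a coercive objective, so $\fproji{\bhx_q}$ exists and satisfies $\bA\fproji{\bhx_q}\geq\bb$ and $\ba_i^\tpose\fproji{\bhx_q}=b_i$; consequently~\eqref{eq:giod4} holds, $\bA^\tpose\by'=\ba_i/\nnorm{\ba_i}=\bc'$ with $\by'\geq\bzero$ gives~\eqref{eq:giod2}, the identity $\bc'^\tpose\fproji{\bhx_q}=b_i/\nnorm{\ba_i}=\bb^\tpose\by'$ gives~\eqref{eq:giod3}, and $\nnorm{\bc'}=1$ gives~\eqref{eq:giod5}.

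Finally I would compare objective values. For each $q$, the point $\bhx_q-\bepsilon_q$ is feasible for~\eqref{eq:def_fproji} at index $i$, so optimality of $\fproji{\bhx_q}$ yields $\norm{\bhx_q-\fproji{\bhx_q}}_p\leq\norm{\bhx_q-(\bhx_q-\bepsilon_q)}_p=\norm{\bepsilon_q}_p$; summing over $q\in\set{Q}$ shows the constructed solution has objective value at most that of the optimal solution we began with, hence is itself optimal. The main obstacle — and where the extension beyond the single-point case genuinely bites — is handling the primal-feasibility constraint~\eqref{eq:giod4}: the plain hyperplane projections $\proji{\bhx_q}$ need not lie in $\feas$, which is precisely why the feasible projection problem~\eqref{eq:def_fproji} is introduced, and the proof must establish simultaneously that for the specific constraint $i$ we single out the feasible projection exists \emph{and} does not cost more than the original perturbation, both of which rest on the observation that $\bhx_q-\bepsilon_q\in\set{H}_i\cap\feas$.
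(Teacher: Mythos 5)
Your proof is correct and follows essentially the same route as the paper's: both arguments reduce to showing that every perturbed point $\bhx_q-\bepsilon_q$ lies on $\set{H}_i\cap\feas$ for a single index $i$, and then invoke optimality of the feasible projection problem~\eqref{eq:def_fproji} to conclude that the candidate~\eqref{eq:gio_has_optimal} does at least as well. The only cosmetic difference is how that index is located — you pass through the optimal face of $\FO{\bc}$ and its containment in some $\set{H}_i$, whereas the paper reads the same conclusion directly off the strong-duality identity $\sum_{i} y_i\left(\ba_i^\tpose(\bhx_q-\bepsilon_q)-b_i\right)=0$ as a vanishing sum of nonnegative terms and picks any $i$ with $y_i>0$.
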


The intuition behind Lemma~\ref{lem:gio_opt} is as follows. Given a feasible set of vectors $\bepsilon_1, \dots, \bepsilon_Q$, every observed decision $\bhx_q$ is perturbed by $\bepsilon_q$ to a point that satisfies both strong duality and primal feasibility. Strong duality implies that $\set{H} = \left\{ \bx \mid \bc^{*\tpose} \bx = \bb^\tpose \by^*  \right\}$ is a supporting hyperplane, and so $\bhx_q - \bepsilon_q$ lies on that supporting hyperplane for all $q\in\set{Q}$. 
Every feasible solution not of the form~\eqref{eq:gio_has_optimal} must satisfy multiple constraints with equality, and is dominated by solutions that involve the feasible projection to just one of those constraints. 
Since Lemma~\ref{lem:gio_opt} holds regardless of the chosen norm and feasibility of the observed decisions, we can show $\GIOd{\dataset}$ can be solved via $m$ convex optimization problems (which become linear with appropriate $p$-norms). 
%
%
%
\begin{theorem}
        \label{thm:gio_opt}
        Consider the following optimization problem:
        \begin{subequations}
                \label{eq:gio_i}
                \begin{align}
                        \min_{i \in \set{I}}\quad \min_{\bepsilon_{1,i},\dots, \bepsilon_{Q,i}} \quad & \sum_{q=1}^Q \norm{\bepsilon_{q,i}}_p \label{eq:gio_i1}\\
                        \st \quad\quad    & \bA \left( \bhx_q - \bepsilon_{q,i} \right) \geq \bb, \quad \forall q \in\mathcal{Q} \label{eq:gio_i2}\\
                        & \ba_i^\tpose (\bhx_q - \bepsilon_{q,i}) = b_i \label{eq:gio_i3}, \quad \forall q \in\mathcal{Q}.
                \end{align}
        \end{subequations}
        For each $i \in \set{I}$, let $(\bepsilon_{1,i}^*,\dots,\bepsilon_{Q,i}^*)$ denote an optimal solution to the inner optimization problem and let $i^* \in \argmin_{i \in \set{I}} \sum_{q=1}^Q \norm{\bepsilon_{q,i}^*}$ denote an optimal index determined by the outer optimization problem. Then, $\left( \ba_{i^*}/\nnorm{\ba_{i^*}}, \BFe_{i^*}/\nnorm{\ba_{i^*}}, \bepsilon_{1,i^*}^*,\dots, \bepsilon_{Q,i^*}^* \right)$ is an optimal solution to $\GIOd{\dataset}$.
\end{theorem}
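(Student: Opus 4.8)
The plan is to combine the structural result of Lemma~\ref{lem:gio_opt} with the observation that, for each fixed $i$, the inner minimization in~\eqref{eq:gio_i} decouples across $q$. Fixing $i \in \set{I}$, both the objective~\eqref{eq:gio_i1} and the constraints~\eqref{eq:gio_i2}--\eqref{eq:gio_i3} are separable in $q$, so the inner problem splits into $Q$ independent problems whose $q$-th instance is exactly the feasible projection problem~\eqref{eq:def_fproji} under the change of variables $\bx = \bhx_q - \bepsilon_{q,i}$. Hence an optimal inner solution is $\bepsilon_{q,i}^* = \bhx_q - \fproji{\bhx_q}$ and the inner optimal value is $\sum_{q=1}^Q \norm{\bhx_q - \fproji{\bhx_q}}_p$. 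Before proceeding I would record that each feasible projection problem is well posed: since $\feas$ is full-dimensional and has no redundant constraints, $\set{H}_i \cap \feas$ is a nonempty facet, and since $\norm{\bhx_q - \cdot}_p$ is coercive on this nonempty closed set, a minimizer $\fproji{\bhx_q}$ exists; thus the inner problem of~\eqref{eq:gio_i} is feasible and bounded for every $i$, and the outer minimization is over a finite nonempty index set.

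Next I would bracket the optimal value $v^*$ of $\GIOd{\dataset}$ by two matching inequalities. For the upper bound, I would check directly that for every $i \in \set{I}$ the tuple $\bigl( \ba_i/\nnorm{\ba_i},\, \BFe_i/\nnorm{\ba_i},\, \bhx_1 - \fproji{\bhx_1}, \dots, \bhx_Q - \fproji{\bhx_Q} \bigr)$ is feasible for $\GIOd{\dataset}$: constraint~\eqref{eq:giod2} holds because $\bA^\tpose \BFe_i = \ba_i$ and $\BFe_i \geq \bzero$; constraint~\eqref{eq:giod5} holds by the normalization $\nnorm{\ba_i/\nnorm{\ba_i}} = 1$; constraint~\eqref{eq:giod4} holds because $\bhx_q - \bepsilon_q = \fproji{\bhx_q} \in \feas$; and constraint~\eqref{eq:giod3} holds because $\ba_i^\tpose \fproji{\bhx_q} = b_i$ gives $\bb^\tpose \by + \bc^\tpose \bepsilon_q = \tfrac{1}{\nnorm{\ba_i}}\bigl(b_i + \ba_i^\tpose(\bhx_q - \fproji{\bhx_q})\bigr) = \tfrac{1}{\nnorm{\ba_i}}\ba_i^\tpose \bhx_q = \bc^\tpose \bhx_q$. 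This tuple attains objective $\sum_{q}\norm{\bhx_q - \fproji{\bhx_q}}_p$, so $v^* \le \min_{i\in\set{I}}\sum_{q=1}^Q \norm{\bhx_q - \fproji{\bhx_q}}_p$. For the lower bound, Lemma~\ref{lem:gio_opt} furnishes an optimal solution of the form~\eqref{eq:gio_has_optimal} for some index $\bar{\imath}$, so $v^* = \sum_{q=1}^Q \norm{\bhx_q - \psi_{\bar{\imath}}(\bhx_q)}_p \ge \min_{i\in\set{I}}\sum_{q=1}^Q \norm{\bhx_q - \fproji{\bhx_q}}_p$. Combining, $v^*$ equals the optimal value of~\eqref{eq:gio_i}.

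Finally I would identify the optimizer. With $i^* \in \argmin_{i\in\set{I}}\sum_{q=1}^Q \norm{\bepsilon_{q,i}^*}_p$ the outer optimal index and $\bepsilon_{q,i^*}^* = \bhx_q - \psi_{i^*}(\bhx_q)$ as in the first step, the upper-bound computation shows the tuple $\bigl( \ba_{i^*}/\nnorm{\ba_{i^*}},\, \BFe_{i^*}/\nnorm{\ba_{i^*}},\, \bepsilon_{1,i^*}^*, \dots, \bepsilon_{Q,i^*}^* \bigr)$ is feasible for $\GIOd{\dataset}$, and by the previous paragraph its objective equals $v^*$; hence it is optimal, which is the claim. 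The main obstacle is conceptual rather than computational: recognizing that the inner problem of~\eqref{eq:gio_i} decouples into single-point feasible projections so that Lemma~\ref{lem:gio_opt} delivers precisely the matching lower bound. A secondary point needing care is the well-posedness of $\fproji{\cdot}$ and the feasibility of the inner problem for every constraint $i$, which rests on the standing full-dimensionality and non-redundancy assumptions on $\feas$.
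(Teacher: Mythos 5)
Your proposal is correct and follows essentially the same route as the paper: the inner problem of~\eqref{eq:gio_i} enumerates, for each $i$, the structured candidate in~\eqref{eq:gio_has_optimal} (via the feasible projections $\fproji{\bhx_q}$), and Lemma~\ref{lem:gio_opt} guarantees that some such candidate is optimal, so taking the minimizing index $i^*$ yields an optimal solution. Your write-up simply fills in details the paper leaves implicit — the decoupling across $q$, the explicit feasibility check of the candidate tuple, and the well-posedness of the feasible projection — all of which are accurate.
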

%
%
%

\subsection{Summary of models and comparison with literature}
\label{sec:model_discussion}

Table~\ref{tab:gio_summary} summarizes the model variants. Next, we relate the optimal values of the three variants. 

\begin{theorem}\label{propn:dominance_relationships}
       Assume $\dataset \subset \set{P}$ and let $z_\mathrm{A}^*$ and $z_p^*$ denote the optimal values of $\GIOa{\dataset}$ and $\GIOd{\dataset}$, respectively. Then $z_p^* \geq z_\mathrm{A}^*$.
\end{theorem}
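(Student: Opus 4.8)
Proof proposal.

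The plan is to show $z_p^*\ge z_\mathrm A^*$ by taking an optimal solution of $\GIOd{\dataset}$ and "collapsing" its vector perturbations onto the imputed cost direction to obtain a \emph{feasible} solution of $\GIOa{\dataset}$ whose objective value is no larger; since $z_\mathrm A^*$ is the minimum over all feasible solutions of $\GIOa{\dataset}$, the inequality follows. Concretely, let $(\bc^*,\by^*,\bepsilon_1^*,\dots,\bepsilon_Q^*)$ be an optimal solution of $\GIOd{\dataset}$ (one exists by Lemma~\ref{lem:gio_opt}), so that $z_p^*=\sum_{q=1}^Q\norm{\bepsilon_q^*}_p$. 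I would then define the scalars $\epsilon_q:=\bc^{*\tpose}\bepsilon_q^*$ for $q\in\set Q$ and consider the tuple $(\bc^*,\by^*,\epsilon_1,\dots,\epsilon_Q)$ as a candidate for $\GIOa{\dataset}$.

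The feasibility check is immediate: constraints \eqref{eq:gioa2} and \eqref{eq:gioa5} are literally \eqref{eq:giod2} and \eqref{eq:giod5}, while \eqref{eq:giod3} gives $\bc^{*\tpose}\bhx_q=\bb^\tpose\by^*+\bc^{*\tpose}\bepsilon_q^*=\bb^\tpose\by^*+\epsilon_q$, which is exactly \eqref{eq:gioa3}; the only constraint discarded is the primal-feasibility requirement \eqref{eq:giod4}, so the candidate is feasible. Hence $z_\mathrm A^*\le\sum_{q=1}^Q|\epsilon_q|=\sum_{q=1}^Q|\bc^{*\tpose}\bepsilon_q^*|$, and it remains to bound each term by $\norm{\bepsilon_q^*}_p$. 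This is a termwise estimate: by H\"older's inequality (equivalently, the dual-norm inequality for the $p$-norm), $|\bc^{*\tpose}\bepsilon_q^*|\le\norm{\bc^*}_{p'}\,\norm{\bepsilon_q^*}_p$, where $p'$ is the conjugate exponent ($1/p+1/p'=1$). Summing over $q$ then yields $z_\mathrm A^*\le(\max_q\text{-style bound})\le z_p^*$ once $\norm{\bc^*}_{p'}\le1$ is established. The hypothesis $\dataset\subset\feas$ enters here only to make the picture consistent with Proposition~\ref{propn:gioa_optimal}: it forces each $\epsilon_q=\bc^{*\tpose}\bhx_q-\bb^\tpose\by^*\ge0$ by weak duality (since $\by^*\ge\bzero$ and $\bA\bhx_q\ge\bb$), so the constructed point is the centroid/single-point reduction of $\GIOa{\dataset}$.

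The step I expect to be the main obstacle is controlling $\norm{\bc^*}_{p'}$ by the normalization $\nnorm{\bc^*}=1$. Under the standing convention $\nnorm{\cdot}=\norm{\cdot}_1$ (consistent with the choice $\cancelvec{\bc}=\sgn(\bc)$ used for the absolute-duality-gap specialization), monotonicity of $\ell_r$-norms gives $\norm{\bc^*}_{p'}\le\norm{\bc^*}_1=\nnorm{\bc^*}=1$ for every $p\ge1$, which closes the argument; more generally it suffices that $\nnorm{\cdot}$ dominates $\norm{\cdot}_{p'}$ on feasible cost vectors. An alternative route, which avoids H\"older entirely, is to substitute the closed-form optimal solution of $\GIOd{\dataset}$ from Lemma~\ref{lem:gio_opt} and Theorem~\ref{thm:gio_opt}, namely $\bc^*=\ba_{i^*}/\nnorm{\ba_{i^*}}$, $\by^*=\BFe_{i^*}/\nnorm{\ba_{i^*}}$, $\bepsilon_q^*=\bhx_q-\fprojistar{\bhx_q}$: then $|\bc^{*\tpose}\bepsilon_q^*|=(\ba_{i^*}^\tpose\bhx_q-b_{i^*})/\nnorm{\ba_{i^*}}$ because $\fprojistar{\bhx_q}\in\set H_{i^*}$, while $\norm{\bepsilon_q^*}_p=\norm{\bhx_q-\fprojistar{\bhx_q}}_p\ge\norm{\bhx_q-\projistar{\bhx_q}}_p=(\ba_{i^*}^\tpose\bhx_q-b_{i^*})/\norm{\ba_{i^*}}_{p'}$ (the feasible projection is at least as far as the unconstrained hyperplane projection), and the comparison again reduces to $\nnorm{\ba_{i^*}}\ge\norm{\ba_{i^*}}_{p'}$. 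Either way, the entire content of the proof is the construction plus this one norm inequality, so I would present the H\"older version as the main line and remark on the normalization assumption.
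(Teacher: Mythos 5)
Your proof is correct, and it takes a genuinely different route from the paper's. The paper first drops from the $p$-norm to the $\infty$-norm, then invokes the structural characterization of decision-space optima (Lemma~\ref{lem:gio_opt}/Theorem~\ref{thm:gio_opt}) to write $z_\infty^*$ as a minimum over constraints of feasible-projection distances, relaxes each feasible projection to the unconstrained hyperplane projection, applies the analytic distance formula of \citet{ref:mangasarian_99}, and aggregates over $q$ to recognize the centroid single-point value, which equals $z_\mathrm{A}^*$ by Proposition~\ref{propn:gioa_optimal}; your remarked ``alternative route'' at the end is essentially this argument. Your main line instead maps any feasible solution of $\GIOd{\dataset}$ directly to a feasible solution of $\GIOa{\dataset}$ by setting $\epsilon_q=\bc^{*\tpose}\bepsilon_q^*$, and then bounds the objective via H\"older together with $\norm{\bc^*}_{p'}\le\norm{\bc^*}_1=\nnorm{\bc^*}=1$. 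This is shorter, needs no structural lemma (indeed not even attainment of the optimum of $\GIOd{\dataset}$, since it works solution-by-solution and one can pass to infima), and---as you half-observe---never actually uses $\dataset\subset\feas$, so it establishes the inequality without the feasibility hypothesis. Both arguments share the same hidden dependence on the normalization choice $\nnorm{\cdot}=\norm{\cdot}_1$ (the paper's through the identification of~\eqref{eq:gio_sol3} with $z_\mathrm{A}^*$, yours through the monotonicity step $\norm{\bc^*}_{p'}\le\norm{\bc^*}_1$); you are right to flag this explicitly, and your sufficient condition that $\nnorm{\cdot}$ dominate $\norm{\cdot}_{p'}$ on feasible cost vectors is exactly what is needed. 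What the paper's longer route buys is an explicit account of where the slack in the inequality comes from (the gap between feasible and unconstrained projections plus the $p$-versus-$\infty$ gap), expressed in the same quantities that reappear in the definition of $\rho$.
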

Theorem~\ref{propn:dominance_relationships} implies that if the decision space model returns a low error, so does the absolute duality gap model. 
Note that although bounds between objective and decision space inverse convex optimization models exist (Theorem 1 in~\citet{ref:bertsimas_mp15} and Proposition 2.5 in~\citet{ref:esfahani_oo15}), the previous bounds were developed using constants based on the non-linearity of the objective function of the forward problem (e.g.,~\citet{ref:bertsimas_mp15} assumes the gradient of the objective is strongly monotone), which are not applicable in our linear setting. Furthermore, due to the nature of relative versus absolute measures, we can also bound the performance of the absolute and relative duality gap models, and consequently connect all three variants.

\begin{corollary}\label{cor:dominance_adg_rdg}
        Let $z_\mathrm{A}^*$ and $z_\mathrm{R}^*$ denote the optimal values of $\GIOa{\dataset}$ and $\GIOr{\dataset}$, respectively. Let $f_\mathrm{A}^*$ and $f_\mathrm{R}^*$ be the optimal values of the forward problem $\FO{\bc}$ using cost vectors obtained by $\GIOa{\dataset}$ and $\GIOr{\dataset}$, respectively. Then, $\left| f_\mathrm{R}^* \right|\, z_\mathrm{R}^*  \geq z_\mathrm{A}^* \geq \left| f_\mathrm{A}^* \right|\, z_\mathrm{R}^* $.
\end{corollary}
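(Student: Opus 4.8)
The plan is to relate $\GIOa{\dataset}$ and $\GIOr{\dataset}$ through a pointwise identity between their objective functions on their (essentially shared) feasible region, and then to obtain both inequalities by substituting the optimal solution of one model into the other.

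First I would establish the key identity. Fix any $(\bc,\by)$ with $\bA^\tpose\by=\bc$, $\by\geq\bzero$, $\nnorm{\bc}=1$ and $\bb^\tpose\by\neq 0$. Because $\dataset\subset\feas$, weak LP duality gives $\bc^\tpose\bhx_q\geq\bb^\tpose\by$ for every $q\in\set{Q}$, so the absolute values in $\GIOa{\dataset}$ vanish and its objective at this point equals $\sum_{q}(\bc^\tpose\bhx_q-\bb^\tpose\by)$. For the same $(\bc,\by)$, the induced ratio variables $\epsilon_q=\bc^\tpose\bhx_q/(\bb^\tpose\by)$ give the $\GIOr{\dataset}$ objective $\sum_q|\epsilon_q-1|=\tfrac{1}{|\bb^\tpose\by|}\sum_q(\bc^\tpose\bhx_q-\bb^\tpose\by)$. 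Hence the $\GIOa{\dataset}$ objective value is exactly $|\bb^\tpose\by|$ times the $\GIOr{\dataset}$ objective value at any such point; in the degenerate case $\bb^\tpose\by=0$, constraint~\eqref{eq:gior3} forces $\bc^\tpose\bhx_q=0$ for all $q$, so both optimal values are $0$ and the claim is immediate.

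Next I would argue that an optimal inverse solution can be taken with a \emph{dual-optimal} multiplier, i.e.\ with $\bb^\tpose\by$ equal to the forward optimum $f^*$ of $\FO{\bc}$ at the imputed cost. For fixed $\bc$, the $\GIOa{\dataset}$ objective $\sum_q\bc^\tpose\bhx_q - Q\,\bb^\tpose\by$ is decreasing in $\bb^\tpose\by$, and $\max\{\bb^\tpose\by:\bA^\tpose\by=\bc,\by\geq\bzero\}=f^*$ by LP strong duality (finite and attained since $\feas\neq\emptyset$ and the forward dual is feasible by~\eqref{eq:gioa2}); hence $\bb^\tpose\by_\mathrm{A}=f_\mathrm{A}^*$. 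The analogous monotonicity argument for $\GIOr{\dataset}$ gives $\bb^\tpose\by_\mathrm{R}=f_\mathrm{R}^*$. Combining with the identity: evaluating the $\GIOa{\dataset}$ objective at the $\GIOr{\dataset}$-optimal solution yields $|\bb^\tpose\by_\mathrm{R}|\,z_\mathrm{R}^*=|f_\mathrm{R}^*|\,z_\mathrm{R}^*$, so $z_\mathrm{A}^*\leq|f_\mathrm{R}^*|\,z_\mathrm{R}^*$; symmetrically, evaluating the $\GIOr{\dataset}$ objective at the $\GIOa{\dataset}$-optimal solution yields $z_\mathrm{A}^*/|f_\mathrm{A}^*|$ (the case $f_\mathrm{A}^*=0$ being trivial), so $z_\mathrm{R}^*\leq z_\mathrm{A}^*/|f_\mathrm{A}^*|$. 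Chaining the two gives $|f_\mathrm{R}^*|\,z_\mathrm{R}^*\geq z_\mathrm{A}^*\geq|f_\mathrm{A}^*|\,z_\mathrm{R}^*$.

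The step I expect to be the main obstacle is making the dual-optimality argument airtight for $\GIOr{\dataset}$: the monotonicity of its objective in $\bb^\tpose\by$ is clean only when $\bb^\tpose\by>0$, so I would need to rule out (or separately treat) sign-indefinite multipliers, reconciling the argument with the three-subproblem decomposition into $\GIOrp{\dataset;K}$, $\GIOrn{\dataset;K}$, $\GIOrz{\dataset;K}$ and the $\epsilon_q:=1$ convention of Section~\ref{sec:giordg}. A secondary technicality is verifying that the optimal solution of one model, after the routine substitution of auxiliary variables (scalar duality gaps versus ratios), is genuinely feasible for the other; this follows from the specialization results (Propositions~\ref{propn:gioa_specialization} and~\ref{propn:gior_specialization}) and the shared constraints~\eqref{eq:gioa2}--\eqref{eq:gioa5}, \eqref{eq:gior2}--\eqref{eq:gior5}.
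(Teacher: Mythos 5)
Your proposal is correct and follows essentially the same route as the paper's proof: evaluate the optimizer of each model as a feasible point of the other and use the fact that the absolute-duality-gap objective equals $\left|\bb^\tpose\by\right|$ times the relative-duality-gap objective at any shared feasible $(\bc,\by)$, with $\bb^\tpose\by$ identified with the forward optimal value via strong duality. The only quibble is that your appeal to $\dataset\subset\feas$ to drop the absolute values is unnecessary (and is not a hypothesis of the corollary) --- the scaling identity holds with the absolute values left intact, which is how the paper's proof proceeds.
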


\begin{table}[t]
        \centering
        \caption{Summary of the different variants of $\GMIO{\dataset}$.}
        \label{tab:gio_summary}
        \begin{tabular}{lccccl}
                \toprule%
                & $\norm{\cdot}$    & $\nnorm{\cdot}$        & $\set{C}$    & $\set{E}_q, \forall q\in\set{Q}$       & Solution approach \\ \midrule
                $\GIOa{\dataset}$      & $\norm{\cdot}_\infty$ & $\nnorm{\cdot}$    & $\field{R}^n$ & $\left\{ \bepsilon_q \mid \bepsilon_q = \epsilon_q \cancelvec{\bc} \right\}$                  & 
                        Polyhedral decomposition
                \\ 
                $\GIOr{\dataset}$      & $\norm{\cdot}_\infty/|\bb^\tpose \by|$     & $\nnorm{\cdot}$     & $\field{R}^n$   & $\left\{ \bepsilon_q \mid \bepsilon_q = \bb^\tpose \by \left( \epsilon_q - 1 \right) \cancelvec{\bc} \right\}$    & Three sub-problems 
                \\ 
                $\GIOd{\dataset}$      & $\norm{\cdot}_p$   & $\nnorm{\cdot}$     & $\field{R}^n$ & $\left\{ \bepsilon_q \mid \bA \left( \bx_q - \bepsilon_q \right) \geq \bb  \right\}$    & \begin{tabular}{@{}l@{}}
                        Formulation~\eqref{eq:gio_i}
                \end{tabular} \\ \bottomrule
        \end{tabular}
\end{table}

Next, we briefly compare our models with similar models from the literature. In-depth technical comparisons are provided in~\ref{sec:ec_inverse_convex}. $\GIOa{\dataset}$ and $\GIOd{\dataset}$ can be seen as special cases of previous inverse convex optimization models~\citep{ref:bertsimas_mp15, ref:aswani_arxiv15, ref:esfahani_oo15}. There, the forward problem is $\min_{\bx} \{ f(\bx;\bu, \bc) \;|\; g(\bx;\bu,\bc) \leq \bzero \}$, where $f(\bx;\bu,\bc)$ and $g(\bx;\bu,\bc)$ are convex differentiable functions and $\bu$ is an exogenous instance-specific parameter. Thus, the data set in their setting is $\dataset = \{(\bhx_1,\bhu_1), \dots, (\bhx_Q, \bhu_Q)\}$. In our setting, we remove $\bu$ and define $f(\bx; \bc) = \bc^\tpose \bx$ and $g(\bx;\bc) = \bb - \bA \bx$ to obtain a linear forward problem with a fixed feasible set. 

While the assumption of instance-specific parameters generalize our setting, we observe that the consequent formulations and methods are on the whole, less efficient than those presented in our paper. Incorporating different forward models, requires additional dual variables and dual feasibility constraints for each feasible set. For a large-scale forward optimization problem, the number of additional variables and constraints required to formulate the inverse problem grows both in the number of feasible sets and the size of $\dataset$. 
\comm{
For example in our application in Section~\ref{sec:imrt}, $n$ (dimension of the decision vector) and $m$ (number of constraints) for the forward problem are on the order of $10^5$. Inverse optimization frameworks from the literature (which impute instance-specific parameters) lead to inverse problems that grow significantly with every data point. In contrast, our ensemble approach using a single forward model does not suffer from this curse.
}


\citet{ref:bertsimas_mp15} study inverse optimization by minimizing a first-order variational inequality (which reduces to the absolute duality gap in LPs) and construct a convex inverse problem without a normalization constraint (e.g., $\nnorm{\bc} = 1$). Although normalization can be avoided with a carefully chosen $\set{C}$, 
setting $f(\bx;\bu,\bc) = \bc^\tpose \bx$ with a general $\set{C} = \field{R}^n$ implies that $(\bc, \by,  \epsilon_1,\dots,\epsilon_Q) = (\bzero, \bzero, 0,\dots,0)$ is a trivially optimal solution.

\citet{ref:esfahani_oo15} study distributionally robust inverse convex optimization problem, which can specialize to absolute duality gap inverse linear optimization with a normalization constraint. Their formulation decomposes to a finite set of conic optimization problems after polyhedral decomposition. While their approach specializes to ours in the non-robust case, we further analyze several other special cases that yield efficient solution methods (e.g.,~Propositions~\ref{propn:gioa_optimal}  and~\ref{propn:gioa_infeas_sol}, and Corollary~\ref{cor:gioa_infeas_sol_mp}). 

\citet{ref:aswani_arxiv15} propose a decision space inverse convex optimization model that satisfies a statistical consistency property given several identifiability conditions that assume the data set of decisions are noisy perturbations of optimal solutions to different forward problems. However, these assumptions may not hold in general, e.g., if they arrive from an ensemble of independent prediction models as in our application (see~\ref{sec:ec_inverse_saa} for details). 
Furthermore, our solution method reformulates $\GIOd{\dataset}$ to $m$ convex problems. In contrast,~\citet{ref:aswani_arxiv15} enumeratively solve the inverse problem using fixed $\bc$ from a quantized subset of $\set{C}$. They state that their algorithm is practical only when the parameter space $\set{C}$ is modest (i.e., at most four or five parameters). 
\comm{
However for $\GIOd{\dataset}$, we assume $\set{C} = \field{R}^n$ and our algorithm for $\GIOd{\dataset}$ is insensitive to $n$.
}

Finally, we remark that the relative duality gap variant has not been studied in inverse convex optimization. It has been studied in inverse linear optimization but only when given a single feasible decision~\citep{ref:chan_gof_16}. Our case study in Section~\ref{sec:imrt} demonstrates the value of $\GIOr{\dataset}$. 

\section{Measuring goodness of fit}
\label{sec:goodness-of-fit}

In this section, we present a unified view of measuring model-data fitness by developing a metric that is easily and consistently interpretable across different inverse linear optimization methods, forward models, and applications. As shown in Example~\ref{ex:gio_rho_good_vs_bad_fit} below, assessing the aggregate error may not provide a complete picture of model fitness, necessitating a context-free fitness metric.

Previously proposed fitness measures for inverse optimization exist but are less general (e.g., \citet{ref:troutt_ms06,ref:chow_12} for application-specific measures or \citet{ref:chan_gof_16} for a metric applicable to only a single feasible decision). Our new metric builds off the latter metric, referred to as the \emph{coefficient of complementarity} and denoted $\rhosp$:
\begin{align*}
        \rhosp = 1 - \frac{\norm{\bepsilon^*}}{\frac{1}{m} \sum_{i=1}^m \norm{\bepsilon_i}}.
\end{align*}
Analogous to the \emph{coefficient of determination} $R^2$ in linear regression, $\rhosp$ provides a scale-free, unitless measure of goodness of fit. The numerator is the residual error from the estimated cost vector, equal to the optimal value of $\GSIO{\bhx}$. The denominator is the average of the errors corresponding to the projections of $\bhx$ to each of the $m$ constraints defining the forward feasible region (i.e., $\bepsilon_i = \bhx - \proji{\bhx}$ for $i \in \set{I}$). Just as $R^2$ calculates the ratio of error of a linear regression model over a baseline mean-only model, $\rhosp$ measures the relative improvement in error from using $\FO{\bc^*}$ compared to a baseline of the average error induced by $m$ candidate cost vectors. 

We now generalize $\rhosp$ for $\GMIO{\dataset}$. For convenience, we omit the data set and denote the absolute duality gap, relative duality gap, and $p$-norm variants of $\rho$ as $\rho_\textnormal{A}$, $\rho_\textnormal{R}$, and $\rho_p$, respectively.

\subsection{Ensemble coefficient of complementarity}

We define the \emph{(ensemble) coefficient of complementarity}, $\rho(\dataset)$, as
\begin{align}
        \label{eq:gof_formula}
        \rho(\dataset) = 1 - \frac{\sum_{q=1}^Q \norm{\bepsilon^*_q}}{\frac{1}{m}\sum_{i=1}^m \left( \sum_{q=1}^Q \norm{\bepsilon_{q,i}} \right) }.
\end{align}
The numerator is the optimal value of $\GMIO{\dataset}$, i.e., the residual error from an optimal solution to the inverse optimization problem. The denominator terms $\sum_{q=1}^Q \norm{\bepsilon_{q,i}}$ represent the aggregate error induced by choosing baseline feasible solutions $(\bc, \by) = (\ba_i/\nnorm{\ba_i}, \BFe_i/\nnorm{\ba_i})$: 
\begin{itemize}
        \item For absolute duality gap, $\GIOa{\dataset}$,
                \begin{align}
                        \label{eq:gof_formula_adg}
                        \sum_{q=1}^Q \norm{\bepsilon_{q,i}} = \sum_{q=1}^Q \frac{\left|\ba_i^\tpose \bhx_q - b_i\right|}{\norm{\ba_i}_1}.
                \end{align}

        \item For relative duality gap, $\GIOr{\dataset}$, under the assumption that $b_i \neq 0$ for all $i \in \set{I}$,
                \begin{align}
                        \label{eq:gof_formula_rdg}
                        \sum_{q=1}^Q \norm{\bepsilon_{q,i}} = \sum_{q=1}^Q \left|\frac{\ba_i^\tpose \bhx_q}{b_i}  -1 \right|.
                \end{align}

        \item For decision space, $\GIOd{\dataset}$, $\sum_{q=1}^Q \norm{\bepsilon_{q,i}}$ are the optimal values of the inner problems in~\eqref{eq:gio_i}.
\end{itemize}

Our choice of baseline (denominator) is a direct extension from the single-point 
case,
where an optimal cost vector can be found by selecting amongst one of the vectors $\ba_i$ defining the $m$ constraints. We maintain this choice of baseline for several reasons. First, an optimal solution will be exactly one of the $\ba_i$ in the general decision space problem (see Lemma~\ref{lem:gio_opt}) and in several special cases of the objective space problem (see Propositions~\ref{propn:gioa_optimal} and~\ref{propn:gior_optimal}). Second, calculation of the denominator is straightforward either directly from the data (e.g.,~\eqref{eq:gof_formula_adg} and~\eqref{eq:gof_formula_rdg}) or via the solution of $m$ convex optimization problems~\eqref{eq:gio_i}.
Third, this definition directly generalizes the single-point metric, 
inheriting several attractive mathematical properties that we present in Section~\ref{subsec:properties}. Finally, given Propositions~\ref{propn:gioa_optimal} and~\ref{propn:gior_optimal}, the ensemble coefficient of complementarity is equal to the single-point version for objective space models when all data points are feasible (proof omitted).
\begin{proposition}
        Let $\bbx$ be the centroid of $\dataset \subset \feas$. Then, $\rho_\textnormal{A}(\dataset) = \rho_\textnormal{A}(\{\bbx\})$ and $\rho_\textnormal{R}(\dataset) = \rho_\textnormal{R}(\{\bbx\})$.
\end{proposition}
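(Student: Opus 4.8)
The plan is to show that, for $\bullet\in\{\textnormal{A},\textnormal{R}\}$, both the numerator and the denominator of $\rho_\bullet(\dataset)$ are exactly $Q$ times the corresponding quantities in $\rho_\bullet(\{\bbx\})$, so the common factor $Q$ cancels. The engine behind this is that $\dataset\subset\feas$ forces every absolute value appearing in these expressions to resolve with a $q$-independent sign, after which linearity in the data together with $\sum_{q\in\set{Q}}\bhx_q=Q\bbx$ does the rest. Note also that $\bbx$, a convex combination of points in the convex set $\feas$, is itself in $\feas$.

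\textbf{Denominator.} For $\GIOa{\dataset}$ I start from \eqref{eq:gof_formula_adg}. Since each $\bhx_q\in\feas$ we have $\ba_i^\tpose\bhx_q\ge b_i$, hence $|\ba_i^\tpose\bhx_q-b_i|=\ba_i^\tpose\bhx_q-b_i$ and $\sum_q\norm{\bepsilon_{q,i}}=\frac{1}{\norm{\ba_i}_1}\sum_q(\ba_i^\tpose\bhx_q-b_i)=\frac{Q(\ba_i^\tpose\bbx-b_i)}{\norm{\ba_i}_1}=Q\,\frac{|\ba_i^\tpose\bbx-b_i|}{\norm{\ba_i}_1}$, the last equality because $\bbx\in\feas$. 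The right-hand side is $Q$ times the single-point baseline term $\norm{\bepsilon_i}$ at $\bbx$. For $\GIOr{\dataset}$ I repeat the argument from \eqref{eq:gof_formula_rdg}, splitting on the sign of $b_i$ (the hypothesis $b_i\ne0$ is in force): when $b_i>0$ the quantity $\frac{\ba_i^\tpose\bhx_q}{b_i}-1$ is nonnegative for every $q$, and when $b_i<0$ it is nonpositive for every $q$, so in both cases the absolute value pulls outside the sum and the same factoring gives $\sum_q\norm{\bepsilon_{q,i}}=Q\,\bigl|\tfrac{\ba_i^\tpose\bbx}{b_i}-1\bigr|=Q\norm{\bepsilon_i}$. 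Summing over $i\in\set{I}$ and dividing by $m$ shows the denominator of $\rho_\bullet(\dataset)$ equals $Q$ times that of $\rho_\bullet(\{\bbx\})$.

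\textbf{Numerator.} The numerator of $\rho_\textnormal{A}(\dataset)$ (resp. $\rho_\textnormal{R}(\dataset)$) is the optimal value of $\GIOa{\dataset}$ (resp. $\GIOr{\dataset}$), which I relate directly to that of $\GSIOa{\bbx}$ (resp. $\GSIOr{\bbx}$). Fix any $(\bc,\by)$ satisfying the shared constraints $\bA^\tpose\by=\bc$, $\by\ge\bzero$, $\nnorm{\bc}=1$; these define the same region in $(\bc,\by)$ for the ensemble and single-point problems, and the $\epsilon_q$ variables are pinned down by the equality constraints. For $\GIOa{\dataset}$ the objective becomes $\sum_q|\bc^\tpose\bhx_q-\bb^\tpose\by|$; weak duality ($\by$ dual feasible, $\bhx_q\in\feas$) gives $\bc^\tpose\bhx_q\ge\bb^\tpose\by$, so this equals $\sum_q(\bc^\tpose\bhx_q-\bb^\tpose\by)=Q(\bc^\tpose\bbx-\bb^\tpose\by)=Q\,|\bc^\tpose\bbx-\bb^\tpose\by|$, which is $Q$ times the $\GSIOa{\bbx}$ objective at the same $(\bc,\by)$; minimizing over the common region yields $z^*_\textnormal{A}(\dataset)=Q\,z^*_\textnormal{A}(\{\bbx\})$ (re-proving Proposition~\ref{propn:gioa_optimal}). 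For $\GIOr{\dataset}$, the same inequality $\bc^\tpose\bhx_q\ge\bb^\tpose\by$ forces $\epsilon_q=\bc^\tpose\bhx_q/(\bb^\tpose\by)\ge1$ when $\bb^\tpose\by>0$ and $\epsilon_q\le1$ when $\bb^\tpose\by<0$, so $|\epsilon_q-1|=|\bc^\tpose\bhx_q-\bb^\tpose\by|/|\bb^\tpose\by|$ with the same sign pattern across $q$, giving $\sum_q|\epsilon_q-1|=Q\,\bigl|\tfrac{\bc^\tpose\bbx}{\bb^\tpose\by}-1\bigr|$; when $\bb^\tpose\by=0$ the stated convention $\epsilon_q:=1$ makes both sides zero. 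Hence $z^*_\textnormal{R}(\dataset)=Q\,z^*_\textnormal{R}(\{\bbx\})$ (re-proving Proposition~\ref{propn:gior_optimal}).

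Combining the two pieces, $\rho_\bullet(\dataset)=1-\dfrac{Q\,z^*_\bullet(\{\bbx\})}{\frac1m\sum_i Q\norm{\bepsilon_i}}=1-\dfrac{z^*_\bullet(\{\bbx\})}{\frac1m\sum_i\norm{\bepsilon_i}}=\rho_\bullet(\{\bbx\})$ for $\bullet\in\{\textnormal{A},\textnormal{R}\}$. The only delicate point is the sign bookkeeping in the relative case, where both $\bb^\tpose\by$ and $b_i$ may be negative and the degenerate value $\bb^\tpose\by=0$ must be absorbed by the convention $\epsilon_q:=1$; everything else is linearity plus the fact that the centroid of feasible points is feasible.
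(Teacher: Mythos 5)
Your proof is correct and follows exactly the route the paper intends: the paper omits the proof, noting only that it follows from Propositions~\ref{propn:gioa_optimal} and~\ref{propn:gior_optimal}, and your argument is precisely the fill-in — the numerator scales by $Q$ via those equivalences (which you re-derive), and the denominator terms \eqref{eq:gof_formula_adg} and \eqref{eq:gof_formula_rdg} scale by $Q$ because feasibility of the $\bhx_q$ (and hence of $\bbx$) fixes the sign inside each absolute value. The sign bookkeeping for $b_i<0$ and the $\bb^\tpose\by=0$ convention are handled appropriately, so nothing is missing.
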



\subsection{Properties of $\rho$}\label{subsec:properties}

\begin{theorem}
        \label{thm:rho_properties}
        The following properties hold for $\rho$ defined in~\eqref{eq:gof_formula}:
        \begin{enumerate}
                \item \textbf{Optimality:} $\rho$ is maximized by an optimal solution to $\GMIO{\dataset}$.

                \item \textbf{Boundedness:} $\rho \in [0,1]$.

                \item \textbf{Monotonicity:}
                        For $1 \leq k < n$, let $\GIOK{\dataset}{k}$ be $\GMIO{\dataset}$ with additional constraints
                        $c_i = 0$, for $k+1 \leq i \leq n$ and let $\rho^{(k)}$ be the coefficient of complementarity. Then, $\rho^{(k)} \leq \rho^{(k+1)}$.

        \end{enumerate}
\end{theorem}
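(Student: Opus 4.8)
The plan is to exploit the common structure of the defining formula~\eqref{eq:gof_formula}: for any feasible solution $(\bc, \by, \bepsilon_1, \dots, \bepsilon_Q)$ of $\GMIO{\dataset}$, the numerator $\sum_{q} \norm{\bepsilon_q}$ is exactly the objective~\eqref{eq:gio1} of $\GMIO{\dataset}$ (equal to the optimal value at an optimal solution), while the denominator is the average over $i \in \set{I}$ of the aggregate errors $\sum_{q} \norm{\bepsilon_{q,i}}$ induced by the baseline pairs $(\bc, \by) = (\ba_i/\nnorm{\ba_i}, \BFe_i/\nnorm{\ba_i})$. The first thing I would record is that this denominator is a fixed nonnegative constant determined by $\dataset$ and $\feas$ only: it contains none of the decision variables of $\GMIO{\dataset}$, and — as the closed forms~\eqref{eq:gof_formula_adg}--\eqref{eq:gof_formula_rdg} and the inner problems~\eqref{eq:gio_i} show — it does not depend on the cost vector. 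Throughout I would assume this denominator is strictly positive, noting that it vanishes only when every $\bhx_q$ lies on every hyperplane $\set{H}_i$, a degenerate case in which the numerator vanishes as well and $\rho$ is taken to be $1$ by convention.

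Given this, \textbf{Optimality} is immediate: with the denominator fixed and positive, $\rho$ is a strictly decreasing affine function of the $\GMIO{\dataset}$ objective, hence maximized exactly at the minimizers of~\eqref{eq:gio1}, i.e., at optimal solutions of $\GMIO{\dataset}$. For \textbf{Boundedness}, $\rho \le 1$ holds because the numerator is a sum of norms (absolute values for $\GIOa{\dataset}$ and $\GIOr{\dataset}$) and the denominator is positive. For $\rho \ge 0$ I would show that each baseline pair $(\ba_i/\nnorm{\ba_i}, \BFe_i/\nnorm{\ba_i})$, together with the induced $\bepsilon_{q,i}$, is \emph{feasible} for the relevant specialization: $\bA^\tpose \BFe_i = \ba_i$ yields~\eqref{eq:gio2}, $\BFe_i \ge \bzero$ yields dual feasibility, $\nnorm{\ba_i/\nnorm{\ba_i}} = 1$ yields the normalization, and the remaining constraint (\eqref{eq:gio3} or its relative/decision-space analogue) determines $\bepsilon_{q,i}$ — for $\GIOr{\dataset}$ this uses $b_i \neq 0$, and for $\GIOd{\dataset}$ it uses that $\set{H}_i \cap \feas \neq \emptyset$, which holds because $\feas$ is full-dimensional without redundant constraints, so each $\set{H}_i$ supports a facet and the feasible projection problem~\eqref{eq:def_fproji} is solvable. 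Hence the optimal value $\sum_q \norm{\bepsilon_q^*}$ is at most $\sum_q \norm{\bepsilon_{q,i}}$ for every $i$, so at most their average, which is the denominator; therefore $\rho \in [0,1]$.

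For \textbf{Monotonicity}, I would rely on two facts. First, the denominator of $\rho^{(k)}$ is the same fixed constant as the denominator of $\rho$ and, in particular, is independent of $k$: the extra constraints $c_i = 0$ defining $\GIOK{\dataset}{k}$ constrain only $\bc$, whereas the baseline errors $\sum_q \norm{\bepsilon_{q,i}}$ depend only on $\dataset$ and $\feas$. Second, the feasible region of $\GIOK{\dataset}{k}$ is contained in that of $\GIOK{\dataset}{k+1}$, since the two problems differ only by the single constraint $c_{k+1} = 0$. Hence the numerator $z^{(k)}$ of $\rho^{(k)}$ (the optimal value of $\GIOK{\dataset}{k}$, taken to be $+\infty$ if infeasible) satisfies $z^{(k)} \ge z^{(k+1)}$, and dividing by the common denominator $d$ gives $\rho^{(k)} = 1 - z^{(k)}/d \le 1 - z^{(k+1)}/d = \rho^{(k+1)}$.

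The steps that demand genuine care are the feasibility verifications behind the lower bound in Part 2, since they must be carried out separately for $\GIOa{\dataset}$, for $\GIOr{\dataset}$ (invoking $b_i \neq 0$), and for $\GIOd{\dataset}$ (invoking full-dimensionality of $\feas$), together with a clean treatment of the degenerate zero-denominator case; once the fixed-denominator structure is established, Optimality and Monotonicity are essentially bookkeeping.
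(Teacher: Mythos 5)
Your proposal is correct and follows essentially the same route as the paper's proof: the denominator is a fixed constant, so optimality is immediate; boundedness follows because the numerator is a sum of norms and is dominated by each baseline term (hence by their average) since the baseline pairs $(\ba_i/\nnorm{\ba_i}, \BFe_i/\nnorm{\ba_i})$ are feasible; and monotonicity follows because $\GIOK{\dataset}{k+1}$ is a relaxation of $\GIOK{\dataset}{k}$. Your write-up is actually somewhat more careful than the paper's, which simply asserts feasibility of the baseline solutions and does not discuss the degenerate zero-denominator case.
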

These properties are analogous to the properties of $R^2$. The first property underlines how $\rho$ integrates into $\GMIO{\dataset}$. Although one can select any cost vector and calculate the 
$\rho$ value with respect to the data $\dataset$, an optimal cost vector obtained by solving $\GMIO{\dataset}$ is guaranteed to attain the maximum value for $\rho$. Like least squares regression and $R^2$, our inverse optimization model and this $\rho$ metric form a unified framework for model fitting and evaluation in inverse linear optimization.

The second property
makes $\rho$ easily interpretable as a measure of goodness of fit,
with higher values indicating better fit. Note that $\rho=1$ if and only if $\sum_{q=1}^Q \norm{\bepsilon^*_q} = 0$ (i.e., every point in $\dataset$ lies on a supporting hyperplane of $\feas$). In this case, the model perfectly describes all of the data points, analogous to the best fit line passing through all data points in a linear regression. Conversely, $\rho=0$ if and only if  $\sum_{q=1}^Q \norm{\bepsilon^*_q} = \sum_{q=1}^Q \norm{\bepsilon_{q,i}}$ for all $i \in \set{I}$. This scenario occurs when an optimal solution to the inverse optimization problem does not reduce the model-data fit error with respect to any of the baseline solutions, akin to when a linear regression returns an intercept-only model.

The third property states that goodness of fit is nondecreasing as additional degrees of freedom are provided to the modeler, analogous to the property that $R^2$ is nondecreasing in the number of features in a linear regression model. Because of this similarity, $\rho$ also shares a weakness of $R^2$ related to overfitting. When using $\rho$ to compare several models, one should ensure that higher values of $\rho$ represent true improvements in fit, rather than artificial increases that lack generalizability.

\subsection{Numerical examples}
\label{sec:goodness-of-fit_numerical}

\comm{
Examples~\ref{ex:gio_rho_good_vs_bad_fit} and~\ref{ex:gio_rho_scaling} illustrate the value of using $\rho$ instead of an unnormalized error measure such as the aggregate error. Intuitively, a given error with a larger feasible set indicates better fit than the same error in a smaller set. Further, $\rho$ degrades when individual data points are forced to deviate from their preferred cost vector to minimize aggregate error. 
}
Example~\ref{ex:rho_heatmap_fo} showcases $\rho$ for a problem where three points in the data set are fixed and the fourth is varied. Due to primal feasibility in $\GIOp{\dataset}{p}$, decision and objective space yield different $\rho$.

%
%
\begin{figure}[t]
        \centering
        \caption{
            \vspace{-1em}Illustration of Example~\ref{ex:gio_rho_good_vs_bad_fit}. $\GIOa{\dataset}$ with two different $\FO{\bc;u,v}$. \comm{The feasible sets are shaded. The black and red squares are $\bhx_q$ and $\bhx_q - \bepsilon^*_q$, respectively. \vspace{-1em}Both problems yield the same $\bc^*$ and $\bepsilon^*_q$, but have different model fitness $\rho$.}
        }
        \label{fig:gio_rho_good_vs_bad_fit}
        \subcaptionbox{$\FO{\bc;-2,10}$. $\rho = 0.76$.
        \label{fig:rho_good_fit}}[0.49\linewidth]{%
                \includegraphics[width=0.343\linewidth]{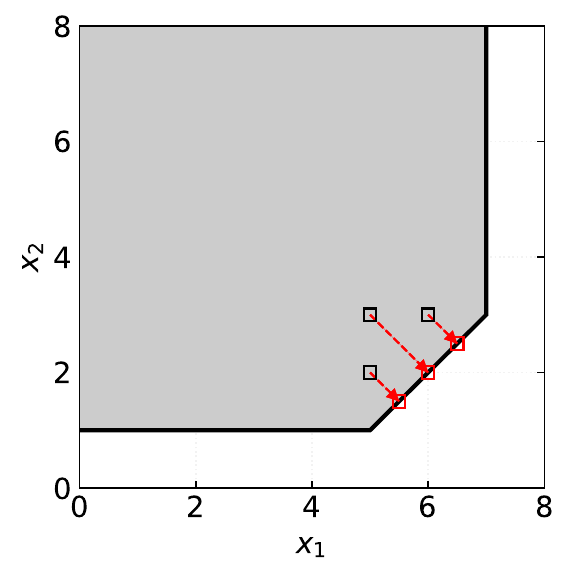}
        }~%
        \subcaptionbox{$\FO{\bc;4,4}$. $\rho = 0.34$.
        \label{fig:rho_bad_fit}}[0.49\linewidth]{%
                \includegraphics[width=0.343\linewidth]{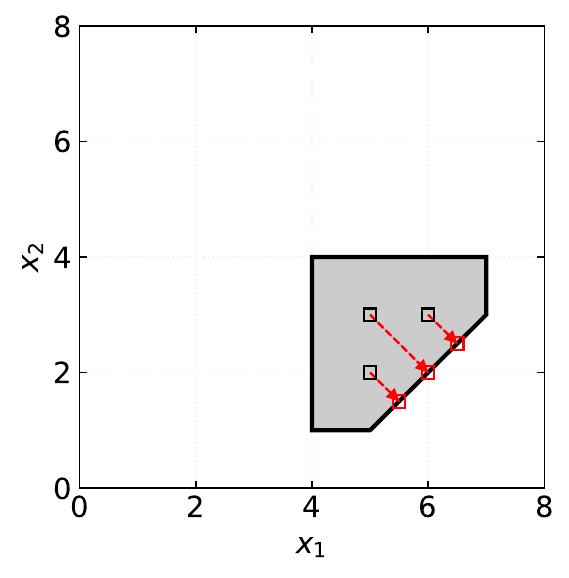}
        }
\end{figure}
\begin{ex}
        \label{ex:gio_rho_good_vs_bad_fit}
        Let $\mathbf{FO}(\bc;u,v): \underset{\bx}{\min}\{c_1 x_1 + c_2 x_2 \; | \; -0.71 x_1 + 0.71 x_2 \geq -2.83, \; x_1 \leq 7, \;  x_2 \leq v, \; x_1 \ge u; \; x_2 \ge 1 \}$ and
        let $\dataset = \left\{ (5,2.5), (4.75,3.75), (5.5,3) \right\}$. \comm{Consider two cases: $\FO{\bc;-2,10}$ and $\FO{\bc;4,4}$.}
        $\GIOa{\dataset}$ yields $\bc^* = \left( -0.5, 0.5 \right)$ and
        $\sum_{q=1}^3 |\epsilon_q^*| = 2.75$ for both, but $\rho=0.76$ for $\FO{\bc;-2,10}$ and $\rho=0.34$ for $\FO{\bc;4,4}$. In Fig.~\ref{fig:rho_good_fit}, $\dataset$ is closer to the bottom facet, relative to the other facets, while in Fig.~\ref{fig:rho_bad_fit}, $\dataset$ is near the ``center'' of the polyhedron rather than one facet.
\end{ex} 

%

%
%
%
\begin{figure}[t]
        \centering
        \caption{
            \vspace{-1em} Illustration of Example~\ref{ex:gio_rho_scaling}. $\GIOa{\dataset}$ with two different data sets for the same $\FO{\bc}$. \comm{The feasible set is shaded. \vspace{-1em}The black and red squares are $\bhx_q$ and $\bhx_q - \bepsilon^*_q$, respectively. Both problems yield the same $\bc^*$ but have different errors $\bepsilon^*_q$ and model fitness $\rho$.}
        }
        \subcaptionbox{$\dataset_1 = \left\{ (3.75, 2), (4, 2.25), (4.25, 2)  \right\}$. $\rho = 0.64$.
        \label{fig:rho_good_scale}}[0.49\linewidth]{%
                \includegraphics[width=0.343\linewidth]{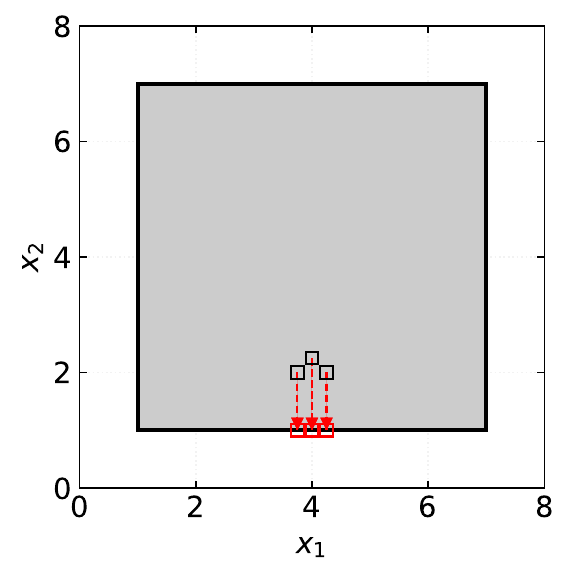}
        }~%
        \subcaptionbox{$\dataset_2 = \left\{ (1.5, 2), (4, 6.25), (6.5, 2)  \right\}$. $\rho = 0.17$.
        \label{fig:rho_bad_scale}}[0.49\linewidth]{%
                \includegraphics[width=0.343\linewidth]{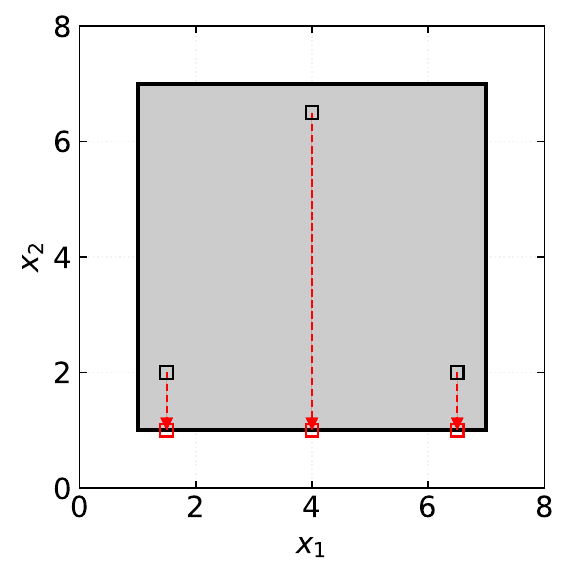}
        }
        \label{fig:gio_rho_good_vs_bad_scale}
\end{figure}
\begin{ex}\label{ex:gio_rho_scaling}
        Let $\mathbf{FO}(\bc): \underset{\bx}{\min} \{c_1 x_1 + c_2 x_2 \; | \;  x_1 \leq 7, \; x_2 \le 7, \; x_1 \geq 1, \; x_2 \geq 1\}$, $\dataset_1 = \left\{ (3.75, 2), (4, 2.25), (4.25, 2) \right\}$ and $\dataset_2 = \left\{ (1.5, 2), (4, 6.25), (6.5, 2) \right\}$.  
        Both $\GIOa{\dataset_1}$ and $\GIOa{\dataset_2}$ impute $\bc^* = \left( 0, 1 \right)$.
        In Fig.~\ref{fig:rho_good_scale}, the points are close together and prefer the bottom facet ($\rho = 0.64$). In Fig.~\ref{fig:rho_bad_scale}, the points are further apart, each with a different preferred cost vector, but aggregate error is minimized by selecting a new different cost vector, resulting in poorer model fit ($\rho= 0.17$).  
\end{ex}

\comm{
Example~\ref{ex:gio_rho_good_vs_bad_fit} and~\ref{ex:gio_rho_scaling} show that the aggregate error and the imputed cost vector from inverse optimization can hide poor model-data fitness. However, poor fitness arises from poor models or poor data. In Example~\ref{ex:gio_rho_good_vs_bad_fit}, the forward model $\FO{\bc;4,4}$ uses constraints that are potentially too tight given the data. On the other hand in Example~\ref{ex:gio_rho_scaling}, the data set $\dataset_2$ is spread out and unlikely to be all generated with respect to a single objective.
}

%
%
%
\begin{figure}[t]
        \centering
        \caption{
            \vspace{-1em}Illustration of Example~\ref{ex:rho_heatmap_fo}. \comm{Heat maps of $\rho$ for different $\GMIO{\dataset}$ where $\dataset$ consists of three fixed points and the fourth variable point. \vspace{-1em}The feasible set is highlighted and the squares are the fixed $\bhx_q$ of $\dataset$. $\rho$ is high for $\GIOa{\dataset}$ along the relevant supporting hyperplanes, but is only high for $\GIOp{\dataset}{2}$ along the facets.}
        }
        \label{fig:gio_rho_heatmaps}
        \subcaptionbox{$\GIOa{\dataset}$
        \label{fig:rho_heatmap_adg}}[0.49\linewidth]{%
                \includegraphics[width=0.343\linewidth]{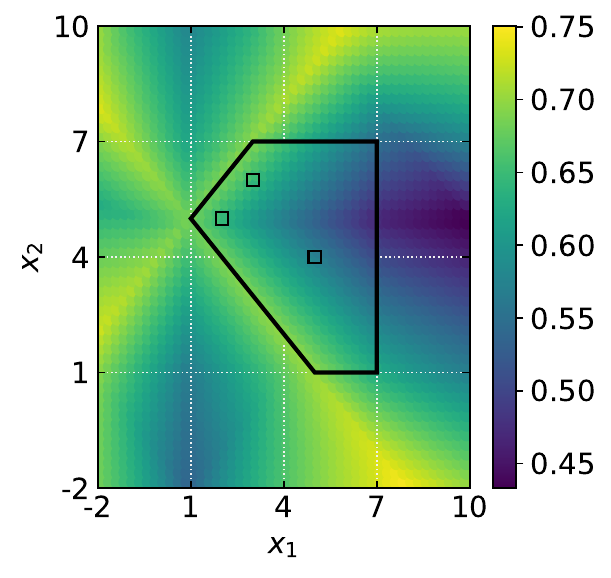}
        }~%
        \subcaptionbox{$\GIOp{\dataset}{2}$
        \label{fig:rho_heatmap_ds}}[0.49\linewidth]{%
                \includegraphics[width=0.343\linewidth]{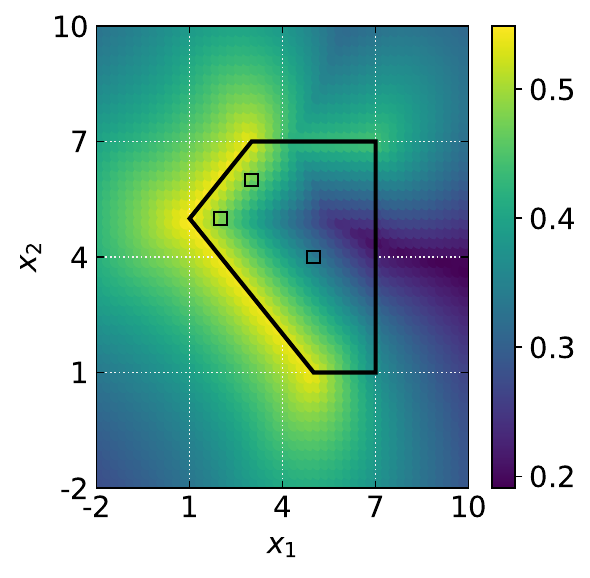}
        }
\end{figure}

\begin{ex}\label{ex:rho_heatmap_fo}
        Let $\mathbf{FO}(\bc): \underset{\bx}{\min} \{c_1 x_1 + c_2 x_2 \; | \;  0.71 x_1 + 0.71 x_2 \geq 4.24, \; 0.71 x_1 -0.71 x_2 \geq -2.83, \; x_1 \leq 7, \; x_2 \leq 7, \; x_2 \ge 1\}$ and consider all data sets of the form $\dataset = \left\{ (2,5), (3,6), (5,4), (\gamma_1, \gamma_2) \right\}$, where $-2 \leq \; \gamma_1, \gamma_2 \; \leq 10$. 
        Fig.~\ref{fig:gio_rho_heatmaps} shows heatmaps of $\rho$ for $\GIOa{\dataset}$ and $\GIOp{\dataset}{2}$.
        \comm{For $\GIOa{\dataset}$, $\rho$ is maximized when the fourth point lies on $\set{H}_1 = \left\{ (x_1,x_2) \mid 0.71 x_1 - 0.71 x_2 = -2.83 \right\}$.} 
        If we solve $\GIOa{\dataset}$ with the three fixed points, then $\bc^* = (0.5, -0.5)$. Thus, when the fourth point lies on $\set{H}_1$, there is zero additional loss. 
        \comm{$\rho$ is also high when the fourth point lies on $\set{H}_2 = \left\{ (x_1, x_2) \mid 0.71 x_1 + 0.71 x_2 = 4.24 \right\}$, and degrades as it moves away from these two hyperplanes.}

        We observe different behavior for $\rho$ in $\GIOp{\dataset}{2}$: maximum model fitness occurs when the fourth point lies along the facets of $\feas$ defined by $\set{H}_1$ and $\set{H}_2$. Due to primal feasibility,  if the fourth point is infeasible, it must project to $\feas$ and thus incur some positive loss. 
\end{ex}
%


\section{Automated knowledge-based planning in radiation therapy}
\label{sec:imrt}

We now implement $\GMIO{\dataset}$ and demonstrate the use of $\rho$ in the context of IMRT treatment planning. In IMRT, a linear accelerator (LINAC) fires beamlets of different intensities that deliver radiation dose to a tumor. A personalized treatment (consisting of beamlet intensity and dose variables) can be designed using a multi-objective optimization model, where the objective weights for a given patient are not known a priori. Knowledge-based planning offers an automated treatment design process. We consider a KBP pipeline where (i) a machine learning model first predicts an appropriate dose distribution for a given patient, (ii) an inverse optimization model treats the dose as an ``observed decision'' to impute candidate objective weights, and (iii) the objective weights are input to the multi-objective planning problem to reconstruct a final plan~\citep{ref:babier_2018a}.

Different prediction models lead to plans that find different trade-offs between clinical evaluation criteria. Rather than a single prediction in KBP, we harness an ensemble of predictions to generate a single treatment plan. 
\comm{However, instead of averaging predictions (like in a random forest), we keep each prediction separate, and feed them all into one inverse optimization model (see Figure~\ref{fig:kbp_pipeline}b).} Until now, KBP has never been used to generate a single plan from multiple predictions. 

We develop an ensemble KBP approach using eight different predictions and show that the relative duality gap model dominates the absolute duality gap model for this application. Plans from the relative duality gap model outperform most single-point models on our overall clinical metric. 
Finally by removing certain low-quality predictions, we design a final model that outperforms the single-point as well as conventional ensemble baselines. Although the final model requires clinically-driven model engineering, we use $\rho$ as domain-independent validation of the clinical intuition.


\subsection{Data and methods}

We use a data set of 217 clinical treatment plans for patients with oropharyngeal (a subset of head-and-neck) cancer, randomly split into 130 plans for training and 87 plans for testing. 
\comm{The training set is used only to pre-train eight prediction models; the test set is used to implement our KBP pipeline.} 
With each patient $k$, we associate parameters $\left( \bC_k, \bA_k, \bb_k \right)$ to a multi-objective linear optimization problem $\FORT{\balpha_k}: \min_{\bx}\left\{ \balpha_k^\tpose \bC_k \bx \mid \bA_k \bx \geq \bb_k, \bx \geq \bzero\right\}$, where $\bC_k$ is a matrix whose rows represent different cost vectors and $\balpha_k$ is a vector of objective weights. The decision vector contains two subvectors, $\bx = (\bw, \bd)$, where $\bw$ is the intensity of each radiation beamlet and $\bd$ is the dose delivered to each voxel (4 mm $\times$ 4 mm $\times$ 2 mm volumetric pixel) of the patient's body, computed by a linear transformation of $\bw$. This multi-objective model fits into $\GMIO{\dataset_k}$ by specifying the set of feasible cost vectors for patient $k$ as $\set{C}_k = \left\{ \bC_k^\tpose \balpha \mid \balpha \geq \bzero \right\}$. Note that the optimization problem for each patient is distinct. For a specific patient, the feasible set is fixed and a single treatment optimization problem is solved. The ensemble arises from the multiple dose predictions for the patient. 
\comm{
Furthermore, our dose predictions are function outputs of the decisions and sufficient for use in inverse optimization. The prediction and optimization models are detailed in \ref{sec:ec_imrt_gio} and summarized below.
}

We first train four different dose prediction models from the literature, labeled Random Forest (RF), 2-D RGB GAN, 2-D GANCER, and 3-D GANCER 
\citep{ref:babier_2018a, ref:babier_2018med, ref:mahmood_2018gancer}. 
For each model, we also implement versions with scaled predictions (suffixed with `-sc.'), which are known to produce plans that better satisfy target (tumor) criteria~\citep{ref:babier_2018med}. Thus, we have eight predictions per patient, which vary in their dose trade-offs between the targets and healthy organs. We predict the dose $\bhd_{k,q}$ for each test patient $k\in\{1,\dots,87\}$ with prediction model $q\in\{1,\dots,8\}$ and let $\dataset_k = \left\{ \bhd_{k,1},\dots,\bhd_{k,8} \right\}$ be data for each patient-specific problem. We then use inverse optimization to construct an optimal treatment plan given these predictions. 

For each patient $k$ in the test set, we implement the absolute and relative duality gap models, referred to as $\IORTa{\dataset_k}$ and $\IORTr{\dataset_k}$, respectively. They are derived from $\GIOa{\dataset_k}$ and $\GIOr{\dataset_k}$ by setting $\set{C}_k$ as defined above,  along with the template hyperparameters of Proposition~\ref{propn:gioa_specialization} and Proposition~\ref{propn:gior_specialization}, respectively.
Once an objective weight vector $\balpha_k^*$ is imputed from one of the inverse models, we solve $\FORT{\balpha^*_k}$ to determine the beamlets $\bw^*_k$ and dose $\bd^*_k$. The dose $\bd^*_k$ is then evaluated using different clinical criteria. Note that we are not attempting to re-construct beamlets or a dose distribution that is similar in $p$-norm to the predictions, but rather learning the objective function weights that the predictions appear to prioritize in order to construct a plan that best reflects clinical preferences in the ground truth $\hat{\bd}_k$. 
Since plan quality is evaluated on dosimetric values in practice, we focus only on the objective space model variants. 


\subsection{The value of ensemble inverse optimization}
\label{sec:imrt_experiments1}

In practice, a suite of quantitative metrics are evaluated to assess whether sufficient dose is delivered to the tumor and the surrounding healthy tissue is sufficiently spared. In line with clinical practice, we use 10 binary criteria for plan evaluation (see the first two columns of Table~\ref{tab:clinCrit}; also \citet{ref:babier_2018b}). 
These criteria cover seven organs-at-risk (OARs) and three planning target volumes (PTVs). OARs are healthy structures whose dose should remain below a specific threshold (e.g., the maximum dose delivered to any voxel in the brainstem should be less than 54 Gy). The PTVs are regions that encompass the tumor sites, and are each assigned a criterion specifying the minimum dose that at least 99\% of its volume should receive. To evaluate our plans on these criteria, we first check whether the corresponding clinical (ground truth) plan satisfied 
given criteria. If the clinical plan satisfied the criteria, we evaluate whether the generated plan also satisfied that criteria.

\begin{table}[t]
        \caption{\vspace{-1em}The percentage of final plans of each KBP population that satisfy the same  clinical criteria as the corresponding clinical plans. OARs are assigned a mean or maximum dose criteria depending on relevance. \vspace{-1em}PTVs are assigned criteria to the 99\%-ile.}
        \centering
        \begin{tabular}{c c  c c c c c } \toprule
                Structure & Criteria (Gy) & $\IORTa{\dataset}$ & \multicolumn{4}{c}{$\IORTr{\dataset}$} \\ 
                & & 8 Pts. & 8 Pts. & 6 Pts. & 4 Pts. & 2 Pts.                  \\ \midrule
                Brainstem & Max $\le$ 54     & 100  & 100  & 100  & 100  & 100  \\ 
                Spinal Cord & Max $\le$ 48   & 100  & 100  & 98.9 & 98.9 & 100  \\
                Right Parotid & Mean $\le$ 26& 58.8 & 88.2 & 88.2 & 82.4 & 94.1 \\
                Left Parotid & Mean $\le$ 26 & 63.6 & 81.8 & 81.8 & 81.8 & 81.8 \\
                Larynx & Mean $\le$ 45       & 59.2 & 95.9 & 95.9 & 93.9 & 95.9 \\
                Mandible & Mean $\le$ 45     & 74.4 & 100  & 100  & 100  & 100  \\ 
                Esophagus & Max $\le$ 73.5   & 51.5 & 100  & 98.5 & 95.5 & 97.0 \\
                PTV70 & 99\%-ile $\ge$ 66.5  & 51.7 & 91.4 & 94.8 & 96.6 & 86.2 \\
                PTV63 & 99\%-ile $\ge$ 59.9  & 50.0 & 98.0 & 98.0 & 98.0 & 98.0 \\
                PTV56 & 99\%-ile $\ge$ 53.2  & 30.4 & 45.7 & 80.4 & 100  & 69.6 \\
                \midrule
                All   &                      & 26.4 & 60.9 & 75.9 & \textbf{83.9} & 70.1 \\
                \bottomrule
        \end{tabular}
        \label{tab:clinCrit}
\end{table}

The columns of Table~\ref{tab:clinCrit} list the proportion of plans generated by $\IORTa{\dataset}$ and $\IORTr{\dataset}$ that satisfied the corresponding clinical criteria. The `All' row reflects the percentage of plans that satisfied all of the criteria that were also met by the corresponding clinical plans and is an aggregate measure of plan quality.
We first use all eight predictions to solve $\IORTa{\dataset}$ (column 3) and $\IORTr{\dataset}$ (column 4).
$\IORTr{\dataset}$ substantially outperforms the $\IORTa{\dataset}$ over every criterion, suggesting that the absolute duality gap model is not well-suited to this application. This result is consistent with results observed for single-point inverse optimization in IMRT~\citep{ref:chan_or14, ref:chan_gof_16, ref:goli_2018} and we conjecture that it is due to the wide range of objective function magnitudes in the forward problem. The absolute duality gap model adjusts each objective value by the same absolute amount, causing relatively large adjustments to objectives with low values and small adjustments to those with high values; thus, it has difficulty balancing different criteria.

Although $\IORTr{\dataset}$ with eight predictions is generally effective at satisfying the OAR criteria, these plans sacrifice the PTV criteria, especially PTV56. We hypothesize that this performance for PTV criteria is due to the large variability in the quality of predictions. For example, the 2-D RGB GAN, 2-D GANCER, and 3-D GANCER models are known to produce plans that emphasize OAR criteria at the expense of the PTV. 
Criteria satisfaction for single-point $\IORTr{\{\bhx\}}$ using each of the individual predictions is shown in Table~\ref{tab:single_points}.
Depending on which prediction is used, the single-point KBP population varies from $10.9\%$ to $95.7\%$ in terms of satisfying the PTV56 criteria. The ability of the single-point models to satisfy all clinical criteria ranges between $44.8\%$ and $80.5\%$, suggesting that some single-point KBP models make poorer trade-offs in criteria satisfaction than others. Regardless of the variability among predictions, the ensemble model outperforms all but the top three single-point models in satisfying all criteria. In cases where the cost of determining model performance is expensive (e.g., having to solve inverse and forward models over multiple predictions and patients), ensemble inverse optimization can reliably provide high-quality plans.

\afterpage{%
\clearpage
\begin{landscape}
\begin{table}[t]
        \centering
        \caption{The percentage of single-point inverse optimization plans of each KBP population that satisfy the same clinical criteria as the clinical plans.} 
        \resizebox{\linewidth}{!}{
        \begin{tabular}{c c c c c c c c c c} \toprule
                Structure & Criteria (Gy) & \multicolumn{8}{c}{$\IORTr{\{ \bhx_q\}}$} \\ 
                &  & 3-D GANCER & 2-D RGB GAN & 2-D GANCER & 2-D RGB GAN-sc. & RF-sc. & RF & 2-D GANCER-sc. & 3-D GANCER-sc. \\ \midrule
                Brainstem & Max $\le$ 54      & 100  & 100  & 100  & 100  & 98.9  & 100  & 100  & 100 \\
                Spinal Cord & Max $\le$ 48    & 100  & 98.9 & 100  & 98.9 & 98.9 & 100  & 98.9 & 98.9 \\
                Right Parotid & Mean $\le$ 26 & 94.1 & 94.1 & 82.4 & 88.2 & 94.1 & 88.2 & 88.2 & 94.1 \\
                Left Parotid & Mean $\le$ 26  & 100  & 90.9 & 81.8 & 63.6 & 72.8 & 63.6 & 81.8 & 81.8 \\
                Larynx & Mean $\le$ 45        & 98.0 & 89.8 & 89.8 & 87.8 & 95.9 & 91.8 & 85.7 & 93.9 \\
                Mandible & Mean $\le$ 45      & 100  & 100  & 100  & 100  & 98.7 & 100	& 100  & 100 \\
                Esophagus & Max $\le$ 73.5    & 100  & 100  & 100  & 98.5 & 100  & 100	& 89.4 & 84.8 \\
                PTV70 & 99\%-ile $\ge$ 66.5   & 81.0 & 36.2 & 81.0 & 69.0 & 63.8 & 91.4 & 98.3 & 100 \\
                PTV63 & 99\%-ile $\ge$ 59.9   & 92.0 & 100  & 100  & 100 & 98.0  & 98.0 & 100  & 100 \\
                PTV56 & 99\%-ile $\ge$ 53.2   & 10.9 & 58.7 & 19.6 & 82.6 & 47.8 & 65.2 & 95.7 & 95.7 \\
                \midrule
                All &                         & 44.8 & 47.1 & 47.1 & 59.8 & 55.2 & 67.8 & 77.0 & \textbf{80.5} \\
                \bottomrule
        \end{tabular}
        }
        \label{tab:single_points}
\end{table}
\end{landscape}
\clearpage
}

\comm{
Using multiple points of varying quality as input to the ensemble model may lead to poor model-data fit (see Example~\ref{ex:gio_rho_scaling}). We experiment with IO models based on subsets of the eight predictions to determine which subset of KBP prediction models best fit $\FORT{\balpha}$. The clinical KBP literature shows that some of the prediction models generally perform better than others: scaled GANCER models typically predict better than RF, which themselves predict better than RGB-GAN and unscaled GANCER~\citep{ref:babier_2018med, ref:mahmood_2018gancer}. 
Using the prior literature and qualitative assessment from a clinical collaborator, we propose an ordering of the models from weak to strong: 3-D GANCER, 2-D RGB GAN, 2-D GANCER, 2-D RGB GAN-sc., RF-sc., RF, 2-D GANCER-sc., 3-D GANCER-sc. Note the general pattern is more important than the exact ordering. That is, we rate the scaled GANCER models as strongest, followed by RF models, followed by RGB GAN and unscaled GANCER.
}
We implement $\IORTr{\dataset}$ with data sets of decreasing size by sequentially removing the two weakest predictors. For example, the 6 Pts. IO model uses the six strongest predictions, while the 4 Pts. model uses only scaled GANCER and RF.
Columns 5--7 of Table~\ref{tab:clinCrit} show the performance of the three subset IO models.  The 6 Pts\@. model markedly improves over the 8 Pts\@. model on PTV criteria, while satisfying almost all OAR criteria, resulting in an additional $15\%$ of the final plans being able to satisfy all criteria. Similarly, the 4 Pts\@. model improves over the 6 Pts\@. model by achieving near perfect PTV criteria satisfaction while mostly preserving OAR performance. In fact, this model now outperforms the best single-point model, 3-D GANCER-sc. (see Table~\ref{tab:single_points}). Interestingly, performance does not improve in the 2 Pts\@. model. This model uses two predictions (2-D GANCER-sc. and 3-D GANCER-sc.) that individually achieve high PTV satisfaction in their single-point models, but fail to do so when combined in an ensemble. We conjecture that the 2 Pts\@. model reaches a local minimum in PTV satisfaction because the forward objectives do not directly target PTV criteria (see~\ref{sec:ec_imrt_gio}).

Overall, these experiments demonstrate that ensemble inverse optimization 
is valuable for turning an ensemble of predictions into a single treatment plan. While an off-the-shelf ensemble model immediately outperforms most single-point constituents, our results show that careful selection of data is required to maximize performance and beat all single-point KBP models.

\begin{table}[t]
    \centering
    \caption{
        \comm{\vspace{-1em}The percentage of plans from different ensemble models that satisfy the same clinical criteria as the corresponding clinical plans. $\IORTr{\dataset}$ refers to the 4 Pts. model from Table~\ref{tab:clinCrit}. \vspace{-1em}We present the best performing setting for each baseline, i.e., the 8 Pts\@. Centroid and the 4 Pts\@. MWA.}
    }
    \begin{tabular}{c c c c c }
            \toprule
            Structure & Criteria (Gy) & $\IORTr{\dataset}$ & Centroid & MWA \\ 
            \midrule
            Brainstem & Max $\le$ 54       & 100  & 100  & 100  \\ 
            Spinal Cord & Max $\le$ 48     & 98.9 & 100  & 100  \\
            Right Parotid & Mean $\le$ 26  & 82.4 & 88.2 & 88.2  \\
            Left Parotid & Mean $\le$ 26   & 81.8 & 81.8 & 63.6  \\
            Larynx & Mean $\le$ 45         & 93.9 & 87.8 & 91.8  \\
            Mandible & Mean $\le$ 45       & 100  & 98.5 & 100  \\ 
            Esophagus & Max $\le$ 73.5     & 95.5 & 100  & 100  \\
            PTV70 & 99\%-ile $\ge$ 66.5    & 96.6 & 96.6 & 93.1  \\
            PTV63 & 99\%-ile $\ge$ 59.9    & 98.0 & 100  & 98.0  \\
            PTV56 & 99\%-ile $\ge$ 53.2    & 100  & 80.4 & 67.4  \\
            \midrule
            All   &                       & \textbf{83.9} & 77.0 & 69.0 \\
            \bottomrule
    \end{tabular}
    \label{tab:clinCrit_ensemble_comparison}
\end{table}

\comm{
\subsection{Comparison with existing ensemble learning techniques}
\label{sec:imrt_comparison_vs_baselines}

We next compare $\IORTr{\dataset}$ with two conventional ensemble learning baselines that do not account for linear programming geometry. The first baseline is an ``ensemble-then-inverse optimization'' approach where for each patient $k$, the centroid $\bar\bd_k$ of the individual predictions is input into a single-point inverse optimization problem $\IORTr{\{\bar{\bd}_k\}}$. 
The second baseline is a Multiplicative Weights Algorithm (MWA), commonly used in ``learning from experts'' settings~\citep{ref:arora_2012multiplicative}. 
Here, we first solve the single-point problem $\IORTr{\{ \hat{\bd}_{k,q}\}}$ with each prediction model for the training set patients.
We treat each prediction model as a different expert and learn a probability distribution over the set of prediction models using the aggregate error as a loss function. Then for each patient in the test set, we use this distribution to randomly sample a prediction model and solve a single-point problem. Baseline implementation details are given in~\ref{sec:ec_baselines}.

We implement the Centroid and the MWA model using all eight predictions per patient (i.e., 8 Pts\@.), as well as the 4 Pts\@. predictions (RF-sc., RF, 2-D GANCER-sc., and 3-D GANCER-sc.). 
Table~\ref{tab:clinCrit_ensemble_comparison} compares our incumbent, the 4 Pts\@. $\IORTr{\dataset}$, with the best-performing Centroid and MWA models. If all dose predictions were feasible with respect to $\FORT{\balpha}$, then by Proposition~\ref{propn:gior_optimal}, our ensemble model and the Centroid model would be equivalent. Each prediction model outputs feasible doses for approximately 85\% of the patients (see Table~\ref{tab:fraction_feas} in the companion). Consequently, $\IORTr{\dataset}$ yields different plans from $\IORTr{\bar{\bd}_{k}}$. Our incumbent outperforms the baseline on the `All' criteria by $6.9\%$. Nonetheless, the Centroid model is similar to $\IORTr{\dataset}$ for each individual criteria. We intuit that if only a small fraction of points in $\dataset$ are infeasible, then centroid inverse optimization is an efficient approximation of ensemble inverse optimization.

The MWA baseline randomly selects a single-point inverse optimization model for each patient according to a learned probability distribution. This approach is a tractable alternative to solving eight inverse optimization problems and selecting the best plan for each patient (see Figure~\ref{fig:kbp_pipeline}a). As shown in Table~\ref{tab:single_points}, some single point models are significantly better than others. Thus, most of the test set patients will receive plans from RF, 2-D GANCER-sc. or 3-D GANCER-sc. However, $\IORTr{\dataset}$ already outperforms each of these single-point models on most of the criteria. Consequently, $\IORTr{\dataset}$ outperforms the MWA baseline on all criteria by $14.9\%$. 
}

\begin{table}[t]
    \centering
    \caption{
        \comm{\vspace{-1em}$\rho$ for the Weak, Medium, and Strong subsets of 2, 4, and 6 Pts. The All criteria percentage satisfaction for each model are in parentheses. The Strong column reflects the predictions used in Table~\ref{tab:clinCrit}.}\vspace{-1em} Highest performing models are bolded. 
    }
    \centering
    \begin{tabular}{c c c c} \toprule
            & \comm{Weak} & \comm{Medium} & \comm{Strong} \\ \midrule
    2 Pts.  & 0.63 (42.5)   & 0.65 (60.9)   & \textbf{0.90 (70.1)} \\
    4 Pts.  & 0.56 (30.1)   & 0.68 (62.1)   & \textbf{0.73 (83.9)} \\
    6 Pts.  & 0.64 (51.7)   & 0.63 (57.5)   & \textbf{0.67 (75.9)} \\
    \bottomrule
    \end{tabular}
    \label{tab:best_avg_worst}
\end{table}

\subsection{Using $\rho$ to validate the best subset of the data}

We previously showed that using a targeted subset of the predictions yielded a better model.
The intuition follows Example~\ref{ex:gio_rho_scaling}, where points that are individually far from each other induce poor fit. While our ranking scheme was domain-specific, here we demonstrate a domain-independent validation of the selection of the data sets in the 6 Pts., 4 Pts., and 2 Pts. models using $\rho$.

\comm{
We consider three variants for each of the 6 Pts., 4 Pts., and 2 Pts. models by selecting subsets of strong, medium, and weak predictions according to our clinical ordering. Strong subsets correspond to the models developed in Section~\ref{sec:imrt_experiments1}, Weak subsets use the lowest ordered predictions and sequentially remove the best, and Medium subsets use the predictions from 2-D RGB GAN to 2-D GANCER-sc. and sequentially remove one strong and weak prediction. 
}
Table~\ref{tab:best_avg_worst} compares $\rho$ across models with varying quality of predictions. 
\comm{
Note that we are not studying the effect of data set size $Q$ (along columns of Table~\ref{tab:best_avg_worst}), but rather the effect of quality (along rows of Table~\ref{tab:best_avg_worst}). For fixed $Q$, the Strong model always yields the highest $\rho$, which suggests that the Strong predictions are the best fit for the clinical forward model. 
}
Furthermore in parentheses in Table~\ref{tab:best_avg_worst}, we show that the clinical criteria satisfaction rates for each of the ensemble models also reflect similar trends as $\rho$. Since $\rho$ is a general metric, we can evaluate the model quality for a given number of points without domain specific knowledge, and come to nearly the same conclusion as via the clinical criteria, which are domain-specific and require additional computation due to re-solving the forward model. 

However, $\rho$ is not a perfect surrogate for criteria satisfaction.
For example, the Weak 6 Pts. model has a slightly higher $\rho$ than the Medium 6 Pts. model. Note that the two data sets share four of six points and the relatively similar $\rho$ reflects a similar criteria satisfaction rate. We also observe that the data set with the best fit from an inverse optimization perspective (Strong 2 Pts.) is not the one resulting in 
the best clinical criteria evaluation (Strong 4 Pts.). 
\comm{
This result is due to the fact $\rho$ is calculated via the average distance of the predictions to the constraints, but the constraints only approximate the criteria (see~\ref{sec:ec_imrt_gio_fo}). 
}
Because the predictions are close to the constraints but not criteria, $\rho$ is overly optimistic for this model. 
\comm{Using diverse predictions of high clinical quality allows us to obtain $\rho$ values that are more representative of the clinical problem.} 

\section{Conclusion}
\label{sec:conclusion}

Inverse optimization is an increasingly popular model-fitting paradigm for estimating the cost vector of an optimization problem given decision data. Motivated by ensemble methods in machine learning, we develop a framework that uses a collection of decisions for a single problem to estimate a cost vector. 
The data is drawn from different decision-makers attempting to solve a single problem or, as in our application, a family of machine learning-generated predictions of an optimal solution. We propose a generalized inverse linear optimization framework that unifies several common variants of inverse optimization from the literature and derive assumption-free exact solution methods for each.
Comparing with the inverse convex optimization literature shows that by focusing on our specialized context, we can leverage the geometry of linear optimization to produce tighter performance bounds and more efficient solution methods.  
To complete our framework, we develop a general goodness of fit metric to measure model-data fit in any inverse linear optimization application. We demonstrate that this metric, by virtue of possessing properties analogous to $R^2$ in linear regression, is easy to calculate and interpret. 


We propose a novel application of ensemble inverse optimization in the automated construction of radiation therapy treatment plans. In contrast to traditional approaches, which generate plans from individual predictions, we use a family of predictions, each with different characteristics and trade-offs, to form treatment plans that better imitate clinically delivered treatments.
Finally, while constructing the best inverse optimization model requires careful clinical expertise, we show how our goodness-of-fit metric provides domain-independent validation of our model engineering. Beyond the specific context and application presented in this paper, we believe there will be new applications of predict-and-ensemble inverse optimize frameworks that can build on our foundation. 


\bibliographystyle{informs2014}
\bibliography{references}

\newcommand{\noop}[1]{}
\begin{thebibliography}{30}
\providecommand{\natexlab}[1]{#1}
\providecommand{\url}[1]{\texttt{#1}}
\providecommand{\urlprefix}{URL }

\bibitem[{Arora et~al.(2012)Arora, Hazan, \protect\BIBand{}
  Kale}]{ref:arora_2012multiplicative}
Arora S, Hazan E, Kale S (2012) The multiplicative weights update method: a
  meta-algorithm and applications. \emph{Theory of Computing} 8(1):121--164.

\bibitem[{Aswani et~al.(2018)Aswani, Shen, \protect\BIBand{}
  Siddiq}]{ref:aswani_arxiv15}
Aswani A, Shen ZJ, Siddiq A (2018) Inverse optimization with noisy data.
  \emph{Operations Research} 66(3):870--892.

\bibitem[{Aswani et~al.(2019)Aswani, Shen, \protect\BIBand{}
  Siddiq}]{ref:aswani_medicare19}
Aswani A, Shen ZJ, Siddiq A (2019) Data-driven incentive design in the medicare
  shared savings program. \emph{Operations Research} 67(4):1002--1026.

\bibitem[{Babier et~al.(2018{\natexlab{a}})Babier, Boutilier, McNiven,
  \protect\BIBand{} Chan}]{ref:babier_2018b}
Babier A, Boutilier JJ, McNiven AL, Chan TCY (2018{\natexlab{a}})
  Knowledge-based automated planning for oropharyngeal cancer. \emph{Medical
  Physics} 45:2875--2883.

\bibitem[{Babier et~al.(2018{\natexlab{b}})Babier, Boutilier, Sharpe, McNiven,
  \protect\BIBand{} Chan}]{ref:babier_2018a}
Babier A, Boutilier JJ, Sharpe MB, McNiven AL, Chan TCY (2018{\natexlab{b}})
  Inverse optimization of objective function weights for treatment planning
  using clinical dose-volume histograms. \emph{Physics in Medicine \& Biology}
  63(10):105004.

\bibitem[{Babier et~al.(2018{\natexlab{c}})Babier, Mahmood, McNiven, Diamant,
  \protect\BIBand{} Chan}]{ref:babier_2018med}
Babier A, Mahmood R, McNiven AL, Diamant A, Chan TCY (2018{\natexlab{c}})
  Knowledge-based automated planning with three-dimensional generative
  adversarial networks. \emph{arXiv preprint arXiv:1812.09309} .

\bibitem[{Bertsimas et~al.(2012)Bertsimas, Gupta, \protect\BIBand{}
  Paschalidis}]{ref:bertsimas_or12}
Bertsimas D, Gupta V, Paschalidis IC (2012) {Inverse optimization: a new
  perspective on the Black-Litterman model}. \emph{Operations Research}
  60(6):1389--1403.

\bibitem[{Bertsimas et~al.(2015)Bertsimas, Gupta, \protect\BIBand{}
  Paschalidis}]{ref:bertsimas_mp15}
Bertsimas D, Gupta V, Paschalidis IC (2015) {Data-driven Estimation In
  Equilibrium Using Inverse Optimization}. \emph{Mathematical Programming}
  153(2):595--633.

\bibitem[{Breiman(2001)}]{ref:breiman_2001}
Breiman L (2001) Random forests. \emph{Machine Learning} 45(1):5--32.

\bibitem[{Chan et~al.(2014)Chan, Craig, Lee, \protect\BIBand{}
  Sharpe}]{ref:chan_or14}
Chan TCY, Craig T, Lee T, Sharpe MB (2014) {Generalized Inverse Multiobjective
  Optimization with Application to Cancer Therapy}. \emph{{Operations
  Research}} 62(3):680--695.

\bibitem[{Chan et~al.(2019)Chan, Lee, \protect\BIBand{}
  Terekhov}]{ref:chan_gof_16}
Chan TCY, Lee T, Terekhov D (2019) Inverse optimization: Closed-form solutions,
  geometry, and goodness of fit. \emph{Management Science} 65(3):1115--1135.

\bibitem[{Chow \protect\BIBand{} Recker(2012)}]{ref:chow_12}
Chow JYJ, Recker WW (2012) Inverse optimization with endogenous arrival time
  constraints to calibrate the household activity pattern problem.
  \emph{Transportation Research Part B: Methodological} 46(3):463--479.

\bibitem[{Craft et~al.(2007)Craft, Suss, \protect\BIBand{}
  Bortfeld}]{ref:craft_2007}
Craft D, Suss P, Bortfeld T (2007) The tradeoff between treatment plan quality
  and required number of monitor units in intensity-modulated radiotherapy.
  \emph{International Journal of Radiation Oncology, Biology, Physics}
  67:1596--1605.

\bibitem[{Das et~al.(2009)Das, Moskvin, \protect\BIBand{}
  Johnstone}]{Das:2009aa}
Das IJ, Moskvin V, Johnstone PA (2009) Analysis of treatment planning time
  among systems and planners for intensity-modulated radiation therapy.
  \emph{Journal of the American College of Radiology} 6(7):514--7.

\bibitem[{Delaney et~al.(2005)Delaney, Jacob, Featherstone, \protect\BIBand{}
  Barton}]{delaney:2005role}
Delaney G, Jacob S, Featherstone C, Barton M (2005) The role of radiotherapy in
  cancer treatment. \emph{Cancer} 104(6):1129--1137.

\bibitem[{Esfahani et~al.(2018)Esfahani, Shafieezadeh-Abadeh, Hanasusanto,
  \protect\BIBand{} Kuhn}]{ref:esfahani_oo15}
Esfahani PM, Shafieezadeh-Abadeh S, Hanasusanto GA, Kuhn D (2018) Data-driven
  inverse optimization with imperfect information. \emph{Mathematical
  Programming} 167(1):191--234.

\bibitem[{Goli(2015)}]{ref:goli_thesis}
Goli A (2015) \emph{Sensitivity and Stability Analysis for Inverse Optimization
  with Applications in Intensity-Modulated Radiation Therapy}. Master's thesis,
  University of Toronto.

\bibitem[{Goli et~al.(2018)Goli, Boutilier, Craig, Sharpe, \protect\BIBand{}
  Chan}]{ref:goli_2018}
Goli A, Boutilier JJ, Craig T, Sharpe MB, Chan TCY (2018) A small number of
  objective function weight vectors is sufficient for automated treatment
  planning in prostate cancer. \emph{Physics in Medicine \& Biology}
  63(19):195004.

\bibitem[{Kearney et~al.(2018)Kearney, Chan, Haaf, Descovich, \protect\BIBand{}
  Solberg}]{ref:kearney_18}
Kearney V, Chan JW, Haaf S, Descovich M, Solberg TD (2018) Dosenet: a
  volumetric dose prediction algorithm using 3d fully-convolutional neural
  networks. \emph{Physics in Medicine \& Biology} 63(23):235022.

\bibitem[{Keshavarz et~al.(2011)Keshavarz, Wang, \protect\BIBand{}
  Boyd}]{ref:keshavarz_isic11}
Keshavarz A, Wang Y, Boyd S (2011) {Imputing a Convex Objective Function}.
  \emph{{2011 IEEE International Symposium on Intelligent Control (ISIC)}},
  613--619.

\bibitem[{Mahmood et~al.(2018)Mahmood, Babier, McNiven, Diamant,
  \protect\BIBand{} Chan}]{ref:mahmood_2018gancer}
Mahmood R, Babier A, McNiven A, Diamant A, Chan TCY (2018) Automated treatment
  planning in radiation therapy using generative adversarial networks.
  \emph{Proceedings of the 3rd Machine Learning for Healthcare Conference},
  volume~85 of \emph{Proceedings of Machine Learning Research}, 484--499
  (PMLR).

\bibitem[{Mangasarian(1999)}]{ref:mangasarian_99}
Mangasarian OL (1999) Arbitrary-norm separating plane. \emph{Operations
  Research Letters} 24(1):15--23.

\bibitem[{McIntosh \protect\BIBand{} Purdie(2016)}]{ref:mcintosh_2016}
McIntosh C, Purdie TG (2016) Voxel-based dose prediction with multi-patient
  atlas selection for automated radiotherapy treatment planning. \emph{Physics
  in Medicine \& Biology} 62(2):415.

\bibitem[{McIntosh et~al.(2017)McIntosh, Welch, McNiven, Jaffray,
  \protect\BIBand{} Purdie}]{ref:mcintosh_2017}
McIntosh C, Welch M, McNiven A, Jaffray DA, Purdie TG (2017) Fully automated
  treatment planning for head and neck radiotherapy using a voxel-based dose
  prediction and dose mimicking method. \emph{Physics in Medicine \& Biology}
  62(15):5926.

\bibitem[{Pang(1987)}]{ref:pang_1987}
Pang JS (1987) A posteriori error bounds for the linearly-constrained
  variational inequality problem. \emph{Mathematics of Operations Research}
  12(3):474--484.

\bibitem[{Saez-Gallego et~al.(2016)Saez-Gallego, Morales, Zugno,
  \protect\BIBand{} Madsen}]{ref:gallego_bidding16}
Saez-Gallego J, Morales JM, Zugno M, Madsen H (2016) A data-driven bidding
  model for a cluster of price-responsive consumers of electricity. \emph{IEEE
  Transactions on Power Systems} 31(6):5001--5011.

\bibitem[{Sharpe et~al.(2014)Sharpe, Moore, \protect\BIBand{}
  Orton}]{ref:sharpe_2014}
Sharpe MB, Moore KL, Orton CG (2014) Within the next ten years treatment
  planning will become fully automated without the need for human intervention.
  \emph{Medical physics} 41(12).

\bibitem[{Troutt(1995)}]{ref:troutt_ms95}
Troutt MD (1995) A maximum decisional efficiency estimation principle.
  \emph{Management Science} 41(1):76--82.

\bibitem[{Troutt et~al.(2006)Troutt, Pang, \protect\BIBand{}
  Hou}]{ref:troutt_ms06}
Troutt MD, Pang WK, Hou SH (2006) Behavioral estimation of mathematical
  programming objective function coefficients. \emph{Management Science}
  53(3):422--434.

\bibitem[{Zhao et~al.(2015)Zhao, Stettner, Reznik, Segr{\`e}, \protect\BIBand{}
  Paschalidis}]{ref:zhao_cdc15}
Zhao Q, Stettner A, Reznik E, Segr{\`e} D, Paschalidis IC (2015) Learning
  cellular objectives from fluxes by inverse optimization. \emph{2015 54th IEEE
  Conference on Decision and Control (CDC)}, 1271--1276.

\end{thebibliography}

\ECSwitch

\ECHead{Electronic Companion}

\section{Proofs of Statements}


\begin{proof}{Proof of Proposition~\ref{propn:gioa_specialization}.}
        For any $\bc$, setting each $\bepsilon_q = \epsilon_q \cancelvec{\bc}$ implies $\norm{\bepsilon_q}_\infty = |\epsilon_q| \norm{\cancelvec{\bc}}_\infty = |\epsilon_q|$. Thus,~\eqref{eq:gio1} becomes~\eqref{eq:gioa1}. Similarly,~\eqref{eq:gio3} becomes~\eqref{eq:gioa3}, since $\bc^\tpose \bepsilon_q = \epsilon_q \bc^\tpose \cancelvec{\bc} = \epsilon_q$. Then, any feasible solution to $\GMIO{\dataset}$ with the suggested hyperparameters yields a feasible solution to $\GIOa{\dataset}$ and vice versa, with the same objective value. \qedwhite
        %
\end{proof}

\begin{proof}{Proof of Theorem~\ref{thm:gio_reform}.}
        Let $j^* \in \underset{j \in \set{J}}{\argmax}\left\{ | c^*_j | \right\}$, implying $| c^*_{j^*} | = 1$. Then, $\left( \bc^*, \by^*, \epsilon_1^*,\dots,\epsilon_Q^* \right)$ is feasible to $\GIOap{\dataset;j^*}$. Conversely, for any $j \in \set{J}$, every feasible solution to $\GIOap{\dataset;j}$ is 
        feasible to $\GIOa{\dataset}$, so all optimal solutions to each $\GIOap{\dataset;j}$ lie in the feasible set of $\GIOa{\dataset}$. \qedwhite
\end{proof}

\begin{proof}{Proof of Proposition~\ref{propn:gioa_optimal}.}
        If all observations are feasible, then by weak duality $\epsilon_q \geq 0$ $\forall q \in \set{Q}$, and we can simplify the objective function
        $      \sum_{q=1}^Q |\epsilon_q| = \sum_{q=1}^Q \epsilon_q
        = \sum_{q=1}^Q \left(\bc^\tpose \bhx_q - \bb^\tpose \by \right)
        = \left( \bc^\tpose \bbx - \bb^\tpose \by \right)Q$,
        where the last equality follows by the definition of the centroid (i.e., $\bbx = \sum_{q=1}^Q \bhx_q / Q$). We similarly compress constraint~\eqref{eq:gioa3} to a single constraint for $\bbx$, resulting in $\GSIOa{\bbx}$. \qedwhite
\end{proof}

\begin{proof}{Proof of Proposition~\ref{propn:gioa_infeas_sol}.}~%
        \begin{enumerate}
                \item Assume without loss of generality that there exist $i,j \in \set{I}$ such that $\ba_i^\tpose \bhx > b_i$ and $\ba_j^\tpose \bhx < b_j$, respectively. The corresponding $\tilde{y}$ defined in~\eqref{eq:gioa_sp_infeas_sol} satisfies the strong duality constraint~\eqref{eq:gioa3} with $\epsilon = 0$. Furthermore, $(\btc, \bty)$ satisfy the duality feasibility constraints~\eqref{eq:gioa2} by construction. We normalize the solution to satisfy constraint~\eqref{eq:gioa5}. The normalized solution still satisfies all other constraints. This solution is  feasible for $\GIOa{\{\bhx\}}$ with zero cost and is thus optimal.

                \item Here, the duality gap is non-positive (i.e., $\epsilon \leq 0$). We rewrite the single-point version of~\eqref{eq:gioa} with $\delta = -\epsilon$, shown in model~\eqref{eq:gioa_reform_after_change} below. Now consider the forward problem $\underset{\bx}{\min} \{-\bc^\tpose \bx \;|\; \bA \bx \leq \bb\}$ with the observed solution $\bhx$ and the corresponding inverse optimization model~\eqref{eq:gioa_reverse_gio}.
        \end{enumerate}
        \if 0 
                \begin{align}
                        \begin{split}
                                \minimize{\bc,\by,\delta} \quad  & \delta \\
                                \subjectto \quad    & \bA^\tpose \by = \bc, \; \by \geq \bzero \\
                                & \bc^\tpose \bhx = \bb^\tpose \by - \delta \\
                                & \nnorm{\bc} = 1. \\
                        \end{split} \label{eq:gioa_reform_after_change}
                \end{align}
        \fi
        \if 0  
                Now consider the forward problem
                \begin{align}
                        \begin{split}
                                \minimize{\bx} \quad   & -\bc^\tpose \bx \\
                                \subjectto \quad       & \bA \bx \leq \bb,
                        \end{split} \label{eq:gioa_reverse_fo}
                \end{align}
        \fi  
        \if 0
                \begin{align}
                        \begin{split}
                                \minimize{\bc, \by, \gamma} \quad    & | \gamma | \\
                                \subjectto \quad    & \bA^\tpose \by = \bc, \; \by \geq \bzero \\
                                & -\bc^\tpose \bhx = -\bb^\tpose \by + \gamma  \\
                                & \nnorm{\bc} = 1. \\
                        \end{split} \label{eq:gioa_reverse_gio}
                \end{align}
        \fi

        \noindent\begin{minipage}{.5\linewidth}
                \begin{align}
                        \begin{split}
                                \minimize{\bc,\by,\delta} \quad  & \delta \\
                                \subjectto \quad    & \bA^\tpose \by = \bc, \; \by \geq \bzero \\
                                & \bc^\tpose \bhx = \bb^\tpose \by - \delta \\
                                & \nnorm{\bc} = 1. \\
                        \end{split} \label{eq:gioa_reform_after_change}
                \end{align}
        \end{minipage}%
        \begin{minipage}{.5\linewidth}
                \begin{align}
                        \begin{split}
                                \minimize{\bc, \by, \gamma} \quad    & | \gamma | \\
                                \subjectto \quad    & \bA^\tpose \by = \bc, \; \by \geq \bzero \\
                                & -\bc^\tpose \bhx = -\bb^\tpose \by + \gamma  \\
                                & \nnorm{\bc} = 1. \\
                        \end{split} \label{eq:gioa_reverse_gio}
                \end{align}
        \end{minipage}

        By assumption, $\bhx$ is feasible for the above-defined forward problem 
        and therefore, $\gamma \geq 0$ in~\eqref{eq:gioa_reverse_gio}. Consequently, formulation~\eqref{eq:gioa_reform_after_change} is equivalent to~\eqref{eq:gioa_reverse_gio} after removing the absolute value in the objective and rearranging the duality gap constraint. We can solve formulation~\eqref{eq:gioa_reverse_gio} using Theorem~\ref{thm:gio_sp_sol}, arriving at an optimal solution for the original inverse optimization problem. \qedwhite
\end{proof}

\begin{proof}{Proof of Corollary~\ref{cor:gioa_infeas_sol_mp}.}
        Since all observations are infeasible for the initial forward problem, the duality gap terms are all non-positive (i.e., $\epsilon_q \leq 0$ for all $q \in \set{Q}$). As such, we use the same argument as used in Prop.~\ref{propn:gioa_infeas_sol} Part 2 to show that the formulation of $\GIOa{\dataset}$ is equivalent to the formulation of an absolute duality gap inverse optimization problem over the alternative forward problem $\underset{\bx}{\min} \{-\bc^\tpose \bx \;|\; \bA \bx \leq \bb\}$. 
        As $\dataset \subset \left\{ \bx \mid \bA \bx \leq \bb \right\}$, Proposition~\ref{propn:gioa_optimal} reduces the problem to $\GIOa{\{\bbx\}}$. \qedwhite
\end{proof}

\begin{proof}{Proof of Proposition~\ref{propn:gior_specialization}.}
        For any $\bc$, setting $\bepsilon_q = \bb^\tpose \by \left( \epsilon_q - 1 \right) \cancelvec{\bc}$ forces $\norm{\bepsilon_q}_\infty / | \bb^\tpose \by | = | \epsilon_q - 1 |$, giving us the objective~\eqref{eq:gior1}. The same substitution into~\eqref{eq:gio3} gives the strong duality constraint~\eqref{eq:gior3}. Thus, every feasible solution of $\GIOr{\dataset}$ has a corresponding feasible solution in $\GMIO{\dataset}$ (after setting the hyperparameters), and vice versa, with the same objective value. 
        \qedwhite
\end{proof}

\begin{remark}
Proposition~\ref{propn:gior_specialization} addresses the case where $\bb^\tpose \by^* \neq 0$ only. However, if $\bb^\tpose \by^* = 0$, $\GIOr{\dataset}$ and $\GMIO{\dataset}$ are still equivalent in that they both yield an optimal value of $0$. To see this, suppose that an optimal solution to $\GIOr{\dataset}$ satisfies $\bb^\tpose \by^* = 0$. Then for all $q\in\set{Q}$, $\bc^{*\tpose} \bhx_q = 0$ and since $\epsilon_q$ becomes a free variable, we set it to $1$ and obtain an optimal value of $0$. On the other hand, we can use the same $\left( \bc^*, \by^*, \bzero,\dots,\bzero \right)$ as a feasible solution to $\GMIO{\dataset}$ and observe that setting $\bepsilon_q = \bzero$ for all $q \in \set{Q}$ satisfies the strong duality constraint, giving an optimal value of $0$. 
\end{remark}

\begin{proof}{Proof of Proposition~\ref{propn:gior_sol}.}
        Let $(\bhc, \bhy)$ be an optimal solution to $\GIOr{\dataset}$ and let
        \begin{align}
                \label{eq:gior_K}
                K = \begin{cases}
                        1/|\bb^\tpose \bhy|     & \text{~if $\bb^\tpose \bhy \neq 0$} \\
                        1/\bhy^{\tpose} \bone   & \text{~otherwise.}
                \end{cases}
        \end{align}
        We omit the variables $(\epsilon_1,\dots,\epsilon_Q)$ when writing optimal solutions for conciseness. First, we show that $(\bhc, \bhy)$ maps to a corresponding feasible solution for one of $\GIOrp{\dataset;K}$, $\GIOrn{\dataset;K}$, or $\GIOrz{\dataset;K}$ with the same objective value. Conversely, every feasible solution to formulations~\eqref{eq:gior_reform_positive}--\eqref{eq:gior_reform_zero} has a corresponding feasible solution in $\GIOr{\dataset}$ with the same objective value. 

        First, suppose $\bb^\tpose \bhy > 0$ and consider $(\btc, \bty) = \left( \bhc/\bb^\tpose \bhy, \bhy/\bb^\tpose \bhy \right)$. This solution is feasible to $\GIOrp{\dataset;K}$ as $\bb^\tpose \bty = 1$ and $\nnorm{\btc} = K$. Furthermore, by substituting $\btc = \bhc/\bb^\tpose \bhy$, we see that the objective value of this solution for $\GIOrp{\dataset;K}$ is equal to the optimal value for $\GIOr{\dataset}$:
        %
        $\sum_{q=1}^Q \left| \btc^\tpose \bhx_q - 1 \right| = \sum_{q=1}^Q \left| \left(\bhc^{\tpose}\bhx_q \right) / \left( \bb^\tpose \bhy \right) - 1 \right|$.
        %
        Similarly, when $\bb^\tpose \bhy < 0$, we construct $(\btc, \bty) = \left( \bhc/|\bb^\tpose \bhy|, \bhy/|\bb^\tpose \bhy| \right)$, which is feasible to $\GIOrn{\dataset;K}$ and incurs the same objective value as the optimal value of $\GIOr{\dataset}$. Finally, if $\bb^\tpose \bhy = 0$, then the optimal value of $\GIOr{\dataset}$ is $0$. Let $(\btc, \bty) = \left( \bhc/\bhy^{\tpose} \bone, \bhy/\bhy^{\tpose} \bone\right)$. It is straightforward to show that this solution is feasible for $\GIOrz{\dataset;K}$. Thus, an optimal solution to $\GIOr{\dataset}$ can be scaled to construct a solution that is feasible for exactly one of the formulations~\eqref{eq:gior_reform_positive}--\eqref{eq:gior_reform_zero}.

        The converse is proven by showing that every feasible solution of~\eqref{eq:gior_reform_positive}--\eqref{eq:gior_reform_zero} can be scaled to a feasible solution of $\GIOr{\dataset}$. Let $(\btc, \bty)$ be a feasible solution to one of~\eqref{eq:gior_reform_positive}--\eqref{eq:gior_reform_zero}, and let $(\bhc, \bhy) = \left( \btc/\nnorm{\btc}, \bty/\nnorm{\btc} \right)$. This solution is feasible for $\GIOr{\dataset}$ with the same objective function value.

        In terms of objective value, all feasible solutions of $\GIOrp{\dataset;K}$, $\GIOrn{\dataset;K}$, and $\GIOrz{\dataset;K}$ have a one-to-one correspondence with feasible solutions of $\GIOr{\dataset}$ and the best optimal solution to formulations~\eqref{eq:gior_reform_positive}--\eqref{eq:gior_reform_zero} can be scaled to an optimal solution for $\GIOr{\dataset}$. \qedwhite
\end{proof}
%

\begin{proof}{Proof of Proposition~\ref{propn:gior_optimal}.}
        When all of the observed points are feasible, $\bc^\tpose\bhx_q - \bb^\tpose\by \ge 0$, $\forall q \in \set{Q}$. Thus, objective~\eqref{eq:gior1} becomes
        $     \sum_{q=1}^Q | \epsilon_q - 1 | = \sum_{q=1}^Q \frac{\bc^\tpose \bhx_q - \bb^\tpose \by}{|\bb^\tpose \by|}
        = Q \left( \frac{\bc^\tpose \bbx - \bb^\tpose \by}{|\bb^\tpose \by|} \right). $
        Noting that $\bbx$ must also be feasible, the last term equals the objective for $\GSIOr{\bbx}$. \qedwhite
\end{proof}

\begin{proof}{Proof of Lemma~\ref{lem:gio_opt}.}
        Without loss of generality, assume that $\nnorm{\ba_i} = 1$ for all $i \in \set{I}$.
        Solution~\eqref{eq:gio_has_optimal} is feasible to $\GIOd{\dataset}$ for all $i \in \set{I}$. We show that for any feasible solution that is not of the form~\eqref{eq:gio_has_optimal}, there exists a feasible solution of that form whose objective value is at least as good.

        Consider a feasible solution $(\btc, \bty, \btepsilon_1, \dots, \btepsilon_Q)$ to $\GIOd{\dataset}$, where $\bty \neq \BFe_i$ for any $i \in \set{I}$. Without loss of generality, assume $\tilde{y}_1, \dots, \tilde{y}_k > 0$ for some $1 < k \leq m$ and let $\set{K} = \left\{ 1, \dots, k  \right\}$ denote the corresponding index set. Let $\btx_q = \bhx_q - \btepsilon_q$ denote the perturbed decision for all $q \in \set{Q}$. 
        The primal feasibility constraint~\eqref{eq:giod4} implies that $\bA \btx_q \geq \bb$ for all $q \in \set{Q}$. The strong duality constraint~\eqref{eq:giod3} implies that for all $q \in \set{Q}$,
        %
        $ 0 = \bc^\tpose \btx_q - \bb^\tpose \bty = \sum_{i=1}^k \tilde{y}_i \left( \ba_i^\tpose \btx_q - b_i \right)$,
        which follows from substituting $\btc = \sum_{i=1}^k \tilde{y}_i \ba_i$.
        %
        Using the non-negativity of $\bty$ and primal feasibility (i.e., $\ba_i^\tpose \btx_q \geq b_i$ for all $i \in \set{I}$), we see that $\btx_q$ for all $q \in \set{Q}$ are feasible solutions to the feasible projection problem~\eqref{eq:def_fproji} for each $i \in \set{K}$.
%

        Let $\left( \bhc, \bhy, \bhepsilon_1,\dots, \bhepsilon_Q \right) = \left( \ba_{i^*}, \BFe_{i^*}, \bhx_1 - \fprojistar{\bhx_1}, \dots, \bhx_Q - \fprojistar{\bhx_Q} \right)$ for an arbitrary index $i^* \in \set{K}$.
        For all $q \in \set{Q}$, $\fprojistar{\bhx_q}$ is, by definition, an optimal solution to~\eqref{eq:def_fproji}.
        Therefore, we have
        %
        $\sum_{q=1}^Q \norm{\bhepsilon_q}_p = \sum_{q=1}^Q \norm{\bhx_q - \fprojistar{\bhx_q}}_p 
        \leq \sum_{q=1}^Q \norm{\btepsilon_q}_p, \label{eq:proof_gio_opt2}$
        %
        with the inequality following from the optimality of~\eqref{eq:def_fproji}. Thus, given any feasible solution to $\GIOd{\dataset}$ not of the form defined in~\eqref{eq:gio_has_optimal}, we can construct a feasible solution of the form~\eqref{eq:gio_has_optimal} with the objective value at least as good as the original.\qedwhite
\end{proof}

\begin{proof}{Proof of Theorem~\ref{thm:gio_opt}.}
        For each $i$, the inner optimization problem produces solutions with the structure in~\eqref{eq:gio_has_optimal}. Thus, the inner optimization problems, along with the corresponding $\left(\bc, \by\right)$ enumerate all possible solutions to $\GIOd{\dataset}$ with the structure in~\eqref{eq:gio_has_optimal}. By Lemma~\ref{lem:gio_opt}, we select the one yielding the lowest objective value.\qedwhite
\end{proof}

\begin{proof}{Proof of Theorem~\ref{propn:dominance_relationships}.}~
    First note that due to the dominance between $p$-norms, (i.e., $\norm{\bepsilon}_p \geq \norm{\bepsilon}_\infty$) we have $z^*_p \geq z^*_\infty$, since the choice of $p$ only affects the objective and the two problems share the same feasible set. We then lower bound the optimal value of $\GIOp{\dataset}{\infty}$ using Theorem~\ref{thm:gio_opt}:
    \begin{align}
            \left.\begin{array}{rl}
                            \displaystyle\min_{i \in \set{I}} \; \min_{\bepsilon_{1,i},\dots,\bepsilon_{Q,i}} \quad & \displaystyle\sum_{q=1}^Q \norm{\bepsilon_{q,i}}_\infty \\
                            \st \quad\quad & \displaystyle\bA \left( \bhx_q - \bepsilon_{q,i} \right) \geq \bb_i, \forall q \in \mathcal{Q} \\
                            & \displaystyle\ba_i^\tpose \left( \bhx_q - \bepsilon_{q,i} \right) = b_i, \forall q \in \mathcal{Q}
            \end{array}\right\}
            &= \min_{i\in\set{I}} \left\{ \sum_{q=1}^Q \norm{ \bhx_q - \fproji{\bhx_q}}_\infty \right\} \\
            &\geq \min_{i \in \set{I}} \left\{ \sum_{q=1}^Q \norm{ \bhx_q - \proji{\bhx_q}}_\infty \right\} \label{eq:gio_sol1} \\
            &= \min_{i \in \set{I}} \left\{ \sum_{q=1}^Q \frac{\left|\ba_i^\tpose \bhx_q-b_i\right|}{{\norm{\ba_i}}_1} \right\} \label{eq:gio_sol2}\\
            &= \min_{i \in \set{I}} \left\{ Q \left(\frac{\ba_i^\tpose \bbx-b_i}{{\norm{\ba_i}}_1}\right) \right\} \label{eq:gio_sol3}.
    \end{align}
    The inequality in~\eqref{eq:gio_sol1} comes from the fact that the projection problem~\eqref{eq:def_proji} is a relaxation of the feasible projection problem~\eqref{eq:def_fproji}, by removing the feasibility constraint. The equality of~\eqref{eq:gio_sol2} comes from~\citet{ref:mangasarian_99} (e.g., see Theorem~\ref{thm:gio_sp_sol}), which provides the analytic optimal value of the projection problem. Because $\bhx_q \in \feas$ for all $\bhx_q \in \dataset$, we bypass the absolute values to average. Note that~\eqref{eq:gio_sol3} is equal to the optimal value of $\GSIO{\bbx}$.

    Now consider $\GIOa{\dataset}$. Because, $\dataset \subset \set{P}$, Proposition~\ref{propn:gioa_optimal} yields $z^*_\mathrm{A} = z^*\big(\GSIO{\bbx}\big)$, i.e., the optimal solution to $\GSIO{\bbx}$ where $\bbx = \sum_{q=1}^Q \bhx_q / Q$ is the centroid of $\dataset$. In conjunction with~\eqref{eq:gio_sol3}, we conclude that $z^*_p\geq z^*_\infty \geq z^*_\mathrm{A}$.
\qedwhite\end{proof}

\comm{
\begin{proof}{Proof of Corollary~\ref{cor:dominance_adg_rdg}.}~We remark that Corollary~\ref{cor:dominance_adg_rdg} is in fact a special case of a more general statement regarding error measures in the absolute versus relative space. Below, we prove a more general statement and specialize the result to the case of inverse optimization.

Let $f(\bx)$ and $g(\bx)$ be two functions and $f(\bx) \neq \bzero$ for all $\bx$. Consider two optimization problems:

\noindent\begin{minipage}{.5\linewidth}
    \begin{align}
            \begin{split}
                \minimize{\bx} \quad  & \sum_{q=1}^Q \left| g_q(\bx) - f(\bx) \right| \\
                \subjectto \quad    & \bx \in \set{X} \\
            \end{split} \label{eq:gioa_remove_epsilon}
    \end{align}
\end{minipage}%
\begin{minipage}{.5\linewidth}
    \begin{align}
            \begin{split}
                \minimize{\bx} \quad  & \sum_{q=1}^Q \left| \frac{g_q(\bx) - f(\bx)}{f(\bx)} \right| \\
                \subjectto \quad    & \bx \in \set{X} \\
            \end{split} \label{eq:gior_remove_epsilon}
    \end{align}
\end{minipage}
Let $\bx^*_\mathrm{A}$ and $z^*_\mathrm{A}$ be an optimal solution and value, respectively for~\eqref{eq:gioa_remove_epsilon}. Similarly, let $\bx^*_\mathrm{R}$ and $z^*_\mathrm{R}$ be an optimal solution and value, respectively for~\eqref{eq:gior_remove_epsilon}. We will prove that $|f(\bx^*_\mathrm{R})| z^*_\mathrm{R} \geq z^*_\mathrm{A} \geq z^*_\mathrm{R} |f(\bx^*_\mathrm{A})|$.

First note that $\bx^*_\mathrm{A}$ is feasible for~\eqref{eq:gior_remove_epsilon} and $\bx^*_\mathrm{R}$ is feasible for~\eqref{eq:gioa_remove_epsilon}. Then, 
\begin{align*}
    z^*_\mathrm{A} 
        = \sum_{q=1}^Q \left| g_q(\bx^*_\mathrm{A}) - f(\bx^*_\mathrm{A}) \right| 
        \leq \sum_{q=1}^Q \left| g_q(\bx^*_\mathrm{R}) - f(\bx^*_\mathrm{R}) \right| 
        = \sum_{q=1}^Q \left| \frac{g_q(\bx^*_\mathrm{R}) - f(\bx^*_\mathrm{R})}{f(\bx^*_\mathrm{R})} \right| |f(\bx^*_\mathrm{R})| = z^*_\mathrm{R} |f(\bx_\mathrm{R})|.
\end{align*}
The inequality comes from the feasibility of $\bx^*_\mathrm{R}$ for~\eqref{eq:gioa_remove_epsilon} and the second equality comes from multiplying by $|f(\bx^*_\mathrm{R})|/|f(\bx^*_\mathrm{R})|$. This proves the left inequality. 

We next show
\begin{align*}
    z^*_\mathrm{R} = \sum_{q=1}^Q \left| \frac{g_q(\bx^*_\mathrm{R}) - f(\bx^*_\mathrm{R})}{f(\bx^*_\mathrm{R})} \right|
        \leq \sum_{q=1}^Q \left| \frac{g_q(\bx^*_\mathrm{A}) - f(\bx^*_\mathrm{A})}{f(\bx^*_\mathrm{A})} \right|
        = \sum_{q=1}^Q \left| g_q(\bx^*_\mathrm{A}) - f(\bx^*_\mathrm{A})  \right| \frac{1}{|{f(\bx^*_\mathrm{A})}|}
        = z^*_\mathrm{A} \frac{1}{|{f(\bx^*_\mathrm{A})}|}.
\end{align*}
The inequality comes from the feasibility of $\bx^*_\mathrm{A}$ for~\eqref{eq:gior_remove_epsilon}. This proves the right inequality.

Finally, we observe that letting $\bx = (\bc, \by)$, $\set{X} = \{ (\bc, \by) \;|\; \bA^\tpose \by = \bc, \by \geq \bzero, \nnorm{\bc} = 1$, $g_q(\bx) = \bc^\tpose \bhx_q$, and $f(\bx) = \bb^\tpose \by$ converts~\eqref{eq:gioa_remove_epsilon} into $\GIOa{\dataset}$ and~\eqref{eq:gior_remove_epsilon} into $\GIOr{\dataset}$. Finally note that for any feasible pair $(\bc, \by)$, $\bb^\tpose \by$ is equal to the optimal value of the forward problem $\FO{\bc}$. Substituting the terms for the absolute and relative duality gap problems respectively completes the inequality. \qedwhite
\end{proof}
}

\begin{proof}{Proof of Theorem~\ref{thm:rho_properties}.}~%
        \begin{enumerate}
                \item Given $\dataset$, $\bA$, and $\bb$, the denominator term in $\rho$ is fixed. An optimal solution to $\GMIO{\dataset}$ minimizes the numerator of $1-\rho$, thus maximizing $\rho$.

                \item We prove $1-\rho \in [0,1]$. It is easy to see that $1-\rho \geq 0$, because it is the ratio of sums of norms, which are nonnegative. To show $1-\rho \leq 1$, note that $\sum_{q=1}^Q \norm{\bepsilon^*_q} \leq \sum_{q=1}^Q \norm{\bepsilon_{q,i}}$ for all $i$, as setting $\bc = \ba_i/\nnorm{\ba_i}$ will yield a feasible but not necessarily optimal solution to $\GMIO{\dataset}$.

        \item An optimal solution to $\GIOK{\dataset}{k}$ is feasible for $\GIOK{\dataset}{k+1}$, since the latter problem is a relaxation of the former. Invoking the first statement in this theorem, $\rho^{(k)} \leq \rho^{(k+1)}$.          \qedwhite\end{enumerate}

\end{proof}

\section{A general solution method for $\GIOr{\dataset}$} 
\label{sec:ec_gior_K}

Although Proposition~\ref{propn:gior_sol} reformulates $\GIOr{\dataset}$ into three sub-problems, the norm constraint $\nnorm{\cdot} \geq K$ in the sub-problems adds two challenges: first, the constraint itself is non-convex, and second, an appropriate value for $K$ must be chosen in order for Proposition~\ref{propn:gior_sol} to hold. As the non-convex constraint can be handled by polyhedral decomposition, we first discuss how to choose a valid $K$. We then consider a relaxed reformulation of $\GIOr{\dataset}$ that often works well in practice. Finally, we summarize all of these results into a general solution algorithm for inverse optimization minimizing the relative duality gap. These steps are summarized in Algorithm~\ref{alg:gior}.

The proof of Proposition~\ref{propn:gior_sol} shows that for any $K > 0$, every feasible solution of $\GIOrp{\dataset;K}$, $\GIOrn{\dataset;K}$, and $\GIOrz{\dataset;K}$ can be mapped to a feasible solution of $\GIOr{\dataset}$. The normalization constraint $\nnorm{\bc} \geq K$ implies that the feasible region for each sub-problem grows as $K$ decreases. The proof then shows that for some sufficiently small $K > 0$, an optimal solution to $\GIOr{\dataset}$ can be mapped to a feasible (and therefore, also optimal) solution of one of~\eqref{eq:gior_reform_positive}--\eqref{eq:gior_reform_zero}.

To determine a sufficiently small $K$, note that the mapping of a solution of $\GIOr{\dataset}$ to solutions of one of~\eqref{eq:gior_reform_positive}--\eqref{eq:gior_reform_zero} involves scaling the solution by $\bb^\tpose \by$, $-\bb^\tpose \by$, or $\by^\tpose \bone$, respectively. Bounding these terms allows us to determine a sufficiently small $K$. Formally, consider the following
problem:
\begin{align}
        \begin{split}
                \maximize{\by} \quad    & \max \left\{ | \bb^\tpose \by |, \by^\tpose \bone \right\} \\
                \subjectto \quad    & \nnorm{\bA^\tpose \by} = 1, \; \by \geq \bzero. \\
        \end{split} \label{eq:gior_K_formulation}
\end{align}
We refer to formulation~\eqref{eq:gior_K_formulation} as the auxiliary problem for $\GIOr{\dataset}$. The auxiliary problem can be written as three optimization problems, each with the same constraints as~\eqref{eq:gior_K_formulation} but a different objective: $\bb^\tpose \by$, $-\bb^\tpose \by$, and $\by^\tpose \bone$. Since the auxiliary problem has a normalization constraint similar to the one in $\GIOa{\dataset}$, we can use the same methods to solve it. Let $K^*$ be defined as the reciprocal of the optimal value of the auxiliary problem. Note that $K^*$ is well-defined. That is, the auxiliary problem always has a non-zero solution, because any feasible $\by$ to~\eqref{eq:gior_K_formulation} must have $\by \geq \bzero$ and at least one non-zero $y_i > 0$, meaning $\by^\tpose \bone > 0$ must always hold. We use $K^*$ to reformulate $\GIOr{\dataset}$ to $\GIOrp{\dataset;K^*}$, $\GIOrn{\dataset;K^*}$, and $\GIOrz{\dataset;K^*}$.

\begin{theorem}
        \label{thm:gior_K_sol}
        Let $z^+$ be the optimal value of $\GIOrp{\dataset;K^*}$ if it is feasible, otherwise $z^+ = \infty$. Let $z^-$ and $z^0$ be defined similarly for $\GIOrn{\dataset;K^*}$ and $\GIOrz{\dataset;K^*}$, respectively. Let $z^* = \min\left\{ z^+, z^-, z^0 \right\}$ and let $\left(\bc^*, \by^*,\epsilon_1^*,\dots,\epsilon_Q^*\right)$ be the corresponding optimal solution. Then, $\left(\bc^*/\nnorm{\bc^*}, \by^*/\nnorm{\bc^*}, \epsilon_1^*, \dots, \epsilon_Q^*\right)$ is optimal to $\GIOr{\dataset}$.
\end{theorem}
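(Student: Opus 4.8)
The plan is to verify that the specific value $K^*$ — the reciprocal of the optimal value of the auxiliary problem~\eqref{eq:gior_K_formulation} — qualifies as a valid choice of the parameter $K$ whose existence is guaranteed by Proposition~\ref{propn:gior_sol}; the theorem is then an immediate consequence. Let $z^*_\mathrm{R}$ denote the optimal value of $\GIOr{\dataset}$. One direction needs no condition on $K$ at all: the ``converse'' half of the proof of Proposition~\ref{propn:gior_sol} shows that, for \emph{any} $K>0$, every feasible solution $(\btc,\bty,\dots)$ of $\GIOrp{\dataset;K}$, $\GIOrn{\dataset;K}$, or $\GIOrz{\dataset;K}$ rescales to a feasible solution $(\btc/\nnorm{\btc},\bty/\nnorm{\btc},\dots)$ of $\GIOr{\dataset}$ with the same objective value (division by $\nnorm{\btc}\ge K>0$ is legitimate). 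Applying this to the optimal solution of whichever sub-problem attains $z^*=\min\{z^+,z^-,z^0\}$ shows that $\left(\bc^*/\nnorm{\bc^*},\by^*/\nnorm{\bc^*},\epsilon_1^*,\dots,\epsilon_Q^*\right)$ is feasible for $\GIOr{\dataset}$, hence $z^*_\mathrm{R}\le z^*$. So the entire content of the theorem reduces to the reverse inequality $z^*\le z^*_\mathrm{R}$, which is the only place the particular value $K^*$ matters.

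To obtain $z^*\le z^*_\mathrm{R}$ I would take an optimal solution $(\bhc,\bhy,\hat{\epsilon}_1,\dots,\hat{\epsilon}_Q)$ of $\GIOr{\dataset}$ and split into three cases according to the sign of $\bb^\tpose\bhy$, exactly as in the first half of the proof of Proposition~\ref{propn:gior_sol}. When $\bb^\tpose\bhy>0$, the pair $(\btc,\bty)=(\bhc/\bb^\tpose\bhy,\ \bhy/\bb^\tpose\bhy)$ is dual feasible, satisfies $\bb^\tpose\bty=1$, and has objective value $\sum_{q=1}^Q\left|\left(\bhc^\tpose\bhx_q\right)/\left(\bb^\tpose\bhy\right)-1\right|=\sum_{q=1}^Q|\hat{\epsilon}_q-1|=z^*_\mathrm{R}$; the only remaining requirement is the normalization constraint $\nnorm{\btc}\ge K^*$, which if verified gives $z^+\le z^*_\mathrm{R}$. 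The cases $\bb^\tpose\bhy<0$ (scaling by $-\bb^\tpose\bhy$, sub-problem $\GIOrn{\dataset;K^*}$) and $\bb^\tpose\bhy=0$ (in which $z^*_\mathrm{R}=0$ by the standing convention $\hat{\epsilon}_q:=1$, scaling by $\bhy^\tpose\bone$, sub-problem $\GIOrz{\dataset;K^*}$) are handled symmetrically, using that $\bhc=\bA^\tpose\bhy\ne\bzero$ forces $\bhy^\tpose\bone>0$ so the last scaling is well defined. In every case this yields $z^*=\min\{z^+,z^-,z^0\}\le z^*_\mathrm{R}$.

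The crux — and the only genuinely new ingredient beyond Proposition~\ref{propn:gior_sol} — is the normalization bound $\nnorm{\btc}\ge K^*$. The key observation is that the (already normalized) dual vector $\bhy$ from $\GIOr{\dataset}$ is itself feasible for the auxiliary problem~\eqref{eq:gior_K_formulation}: $\bhy\ge\bzero$ and $\nnorm{\bA^\tpose\bhy}=\nnorm{\bhc}=1$. Hence $\max\{|\bb^\tpose\bhy|,\ \bhy^\tpose\bone\}$ is bounded above by the optimal value $M^*$ of the auxiliary problem, so $|\bb^\tpose\bhy|\le M^*$ and $\bhy^\tpose\bone\le M^*$. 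Since $\nnorm{\bhc}=1$, in the first case $\nnorm{\btc}=1/|\bb^\tpose\bhy|\ge 1/M^*=K^*$, in the zero case $\nnorm{\btc}=1/\bhy^\tpose\bone\ge K^*$, and the negative case is identical; here I use that $K^*=1/M^*$ is well defined and strictly positive, as established in the discussion preceding the theorem ($M^*>0$ because every feasible $\bhy$ has $\bhy^\tpose\bone>0$, and $M^*<\infty$ since the auxiliary problem is bounded). Combining with the easy direction gives $z^*=z^*_\mathrm{R}$, and the feasible solution exhibited above shows $\left(\bc^*/\nnorm{\bc^*},\by^*/\nnorm{\bc^*},\epsilon_1^*,\dots,\epsilon_Q^*\right)$ is optimal for $\GIOr{\dataset}$. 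I expect the sign-case bookkeeping and the objective identity to be a near-verbatim reprise of the Proposition~\ref{propn:gior_sol} proof, so the uniform bounding argument via the auxiliary problem is really the sole obstacle — and it is precisely what the definition of $K^*$ is engineered to supply.
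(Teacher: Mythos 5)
Your proposal is correct and follows essentially the same route as the paper: both hinge on observing that the optimal dual vector $\bhy$ of $\GIOr{\dataset}$ is feasible for the auxiliary problem~\eqref{eq:gior_K_formulation}, so $\max\{|\bb^\tpose\bhy|,\bhy^\tpose\bone\}\le 1/K^*$ and hence the rescaled solution from the proof of Proposition~\ref{propn:gior_sol} satisfies $\nnorm{\btc}\ge K^*$, with the converse direction inherited unchanged from that proposition. Your version merely spells out the sign cases and the objective-value bookkeeping more explicitly than the paper does.
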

\begin{proof}{Proof of Theorem~\ref{thm:gior_K_sol}.}
        Let $\left( \bhc, \bhy \right)$ be optimal to $\GIOr{\dataset}$ and $K$ be defined as in~\eqref{eq:gior_K}. Since $\bhy$ is feasible for the auxiliary problem~\eqref{eq:gior_K_formulation}, $1/K^* \geq \max\left\{ |\bb^\tpose \bhy|, \bhy^\tpose \bone \right\}$, implying $K^* \leq K$.

        The proof of Proposition~\ref{propn:gior_sol} showed that scaling $\left( \bhc, \bhy \right)$ appropriately yielded a corresponding feasible solution to one of $\GIOrp{\dataset;K}$, $\GIOrn{\dataset;K}$, or $\GIOrz{\dataset;K}$. Because $K^* \leq K$, the scaled solution must also be feasible for the respective $\GIOrp{\dataset;K^*}$, $\GIOrn{\dataset;K^*}$, or $\GIOrz{\dataset;K^*}$. Moreover, every solution of $\GIOrp{\dataset;K^*}$, $\GIOrn{\dataset;K^*}$, or $\GIOrz{\dataset;K^*}$ can be scaled to a feasible solution of $\GIOr{\dataset}$, completing the proof. \qedwhite
\end{proof}
%


%
\begin{algorithm}[t]
        \caption{General solution method for $\GIOr{\dataset}$}
        \label{alg:gior}
        \begin{algorithmic}[1]
                \renewcommand{\algorithmicrequire}{\textbf{Input:}}
                \renewcommand{\algorithmicensure}{\textbf{Output:}}
                \REQUIRE Data set $\dataset$
                \ENSURE  Imputed model parameters $\left( \bc^*, \by^*, \epsilon_1^*, \dots, \epsilon_Q^* \right)$
                \STATE Let $z^+_\LP \gets \GIOrprel{\dataset}$, $z^-_\LP \gets \GIOrnrel{\dataset}$, $z^0_\LP \gets \GIOrzrel{\dataset}$ be the optimal values. 
                \STATE Let $z^*_\LP \leftarrow \min\left\{ z^+_\LP, z^-_\LP, z^0_\LP  \right\}$ and $\left( \bc^*, \by^*, \epsilon_1^*,\dots, \epsilon_Q^* \right)$ be the corresponding optimal solution.
                \IF{$\bc^* \neq \bzero$}
                \RETURN $\left( \bc^*, \by^*, \epsilon_1^*, \dots, \epsilon_Q^* \right)$
                \ELSE
                \STATE Solve the auxiliary problem~\eqref{eq:gior_K_formulation}. Let $K^*$ be the reciprocal of the optimal value.
                \STATE Let $z^+ \gets \GIOrp{\dataset;K^*}$, $z^- \gets \GIOrn{\dataset;K^*}$, $z^0 \gets \GIOrz{\dataset;K^*}$ be the optimal values. 
                \STATE Let $z^* \leftarrow \min\left\{ z^+, z^-, z^0 \right\}$ and $\left( \bc^*, \by^*, \epsilon_1^*, \dots, \epsilon_Q^* \right)$ be the corresponding optimal solution.
                \RETURN $\left( \bc^*, \by^*, \epsilon_1^*, \dots, \epsilon_Q^* \right)$
                \ENDIF
        \end{algorithmic}
\end{algorithm}

In the most general case,
solving $\GIOr{\dataset}$ is more computationally intensive than solving $\GIOa{\dataset}$. We must first identify $K^*$, which we can use to reformulate $\GIOr{\dataset}$ into three norm-constrained optimization problems. Subsequently, given an appropriate choice of $\nnorm{\cdot}$, each problem is decomposed into a series of 
LPs.
For instance,
doing so leads to $2n$ LPs if $\nnorm{\cdot} = \norm{\cdot}_\infty$ and $2^n$ LPs if $\nnorm{\cdot} = \norm{\cdot}_1$.
These steps coupled with the auxiliary problem~\eqref{eq:gior_K_formulation} used to determine $K^*$ require the solution of $12n$ 
LPs
when $\nnorm{\cdot} = \norm{\cdot}_\infty$, or $6(2^n)$ when $\nnorm{\cdot} = \norm{\cdot}_1$. In some cases, however, it may be possible to find an optimal solution to $\GIOr{\dataset}$ by solving exactly three LPs. 

\begin{corollary}
        \label{cor:gior_sol_nonzero}
        Let $\GIOrprel{\dataset}$, $\GIOrnrel{\dataset}$, and $\GIOrzrel{\dataset}$ be the LP relaxations of $\GIOrp{\dataset;K}$, $\GIOrn{\dataset;K}$, and $\GIOrz{\dataset;K}$, respectively, obtained by removing the normalization constraint $\nnorm{\bc} \geq K$. Let $z^+_\LP$ be the optimal value of $\GIOrprel{\dataset}$ if it is feasible, otherwise $z^+_\LP = \infty$. Let $z^-_\LP$ and $z^0_\LP$ be defined similarly for $\GIOrnrel{\dataset}$ and $\GIOrzrel{\dataset}$, respectively. Let $z^*_\LP = \min\left\{ z^+_\LP, z^-_\LP, z^0_\LP \right\}$ and let $\left(\bc^*, \by^*, \epsilon_1^*, \dots, \epsilon_Q^*\right)$ be an optimal solution of the corresponding problem. If $\bc^* \neq \bzero$, then $z^*_\LP$ is equal to the optimal value of $\GIOr{\dataset}$ and $\left(\bc^*/\nnorm{\bc^*}, \by^*/\nnorm{\bc^*}, \epsilon_1^*,\dots,\epsilon_Q^*\right)$ is an optimal solution to $\GIOr{\dataset}$.
\end{corollary}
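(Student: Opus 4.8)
The plan is to sandwich the optimal value of $\GIOr{\dataset}$ between copies of $z^*_\LP$, reusing the scaling correspondence from the proof of Proposition~\ref{propn:gior_sol}. For the lower bound, observe that $\GIOrprel{\dataset}$, $\GIOrnrel{\dataset}$, and $\GIOrzrel{\dataset}$ are obtained from $\GIOrp{\dataset;K}$, $\GIOrn{\dataset;K}$, and $\GIOrz{\dataset;K}$ simply by deleting the constraint $\nnorm{\bc}\geq K$, so for \emph{every} $K>0$ they are relaxations; with the convention that an infeasible subproblem has value $\infty$ this gives $z^+_\LP\leq z^+$, $z^-_\LP\leq z^-$, $z^0_\LP\leq z^0$, and hence $z^*_\LP\leq z^*$. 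By Proposition~\ref{propn:gior_sol} there is a choice of $K$ for which $z^*=\min\{z^+,z^-,z^0\}$ equals the optimal value of $\GIOr{\dataset}$, so $z^*_\LP$ never exceeds the optimal value of $\GIOr{\dataset}$, and this holds whether or not $\bc^*=\bzero$.

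For the reverse inequality I would invoke the hypothesis $\bc^*\neq\bzero$ to rescale the relaxed optimizer into a feasible point of $\GIOr{\dataset}$. Say $z^*_\LP=z^+_\LP$ is attained by $(\bc^*,\by^*,\epsilon^*_1,\dots,\epsilon^*_Q)$ in $\GIOrprel{\dataset}$ (the other two branches are handled identically, with the sign changes of $\GIOrnrel{\dataset}$ and the $\bb^\tpose\by=0$ convention of $\GIOrzrel{\dataset}$ exactly as in the proof of Proposition~\ref{propn:gior_sol}). Dividing $\bc^*$ and $\by^*$ by $\nnorm{\bc^*}>0$ preserves $\bA^\tpose\by=\bc$ and $\by\geq\bzero$ and enforces $\nnorm{\bc}=1$; moreover $(\bc^*/\nnorm{\bc^*})^\tpose\bhx_q=\epsilon^*_q/\nnorm{\bc^*}$ and $\bb^\tpose(\by^*/\nnorm{\bc^*})=1/\nnorm{\bc^*}$, so the ratio constraint~\eqref{eq:gior3} of $\GIOr{\dataset}$ holds with the \emph{same} ratio variables $\epsilon^*_q$. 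Thus $(\bc^*/\nnorm{\bc^*},\by^*/\nnorm{\bc^*},\epsilon^*_1,\dots,\epsilon^*_Q)$ is feasible for $\GIOr{\dataset}$ with objective $\sum_{q=1}^Q|\epsilon^*_q-1|=z^*_\LP$, so the optimal value of $\GIOr{\dataset}$ is at most $z^*_\LP$. Combining the two bounds shows $z^*_\LP$ equals the optimal value of $\GIOr{\dataset}$ and that the rescaled solution attains it.

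I do not expect a genuine obstacle here: the statement is essentially a ``short-circuit'' of Theorem~\ref{thm:gior_K_sol}, asserting that when the unconstrained relaxation already returns a nonzero cost vector one may skip the auxiliary problem that produces $K^*$ (this is precisely the first branch of Algorithm~\ref{alg:gior}). The only care needed is bookkeeping: handling the $\infty$ convention on the infeasible branches, and verifying in each of the three branches that dividing by $\nnorm{\bc^*}$ is the right rescaling and leaves $\epsilon^*_q$ unchanged; for $\GIOrnrel{\dataset}$ both the duality-gap sign and $\bb^\tpose\by=-1$ flip together, and for $\GIOrzrel{\dataset}$ one has $\bc^{*\tpose}\bhx_q=\bb^\tpose\by^*=0$ after scaling, so the convention $\epsilon_q:=1$ makes the objective equal $0=z^0_\LP$.
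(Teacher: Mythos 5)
Your proposal is correct and follows essentially the same route as the paper's proof: a two-sided argument in which any optimal solution of $\GIOr{\dataset}$ maps (via the scaling from Proposition~\ref{propn:gior_sol}) into one of the relaxed problems with the same objective value, and conversely any relaxed optimizer with $\bc^*\neq\bzero$ is rescaled by $\nnorm{\bc^*}$ into a feasible point of $\GIOr{\dataset}$ with unchanged $\epsilon_q^*$. The only cosmetic difference is that you route the lower bound through the $K$-constrained subproblems, whereas the paper maps the optimum of $\GIOr{\dataset}$ directly into the LP relaxations; both are valid and rest on the same correspondence.
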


\begin{proof}{Proof of Corollary~\ref{cor:gior_sol_nonzero}.}
        Let $(\bhc, \bhy, \hat{\epsilon}_1,\dots,\hat{\epsilon}_Q)$ be an optimal solution to $\GIOr{\dataset}$. From Proposition~\ref{propn:gior_sol}, this solution can be rescaled to construct a feasible solution for one of $\GIOrprel{\dataset}$, $\GIOrnrel{\dataset}$, and $\GIOrzrel{\dataset}$ with the same objective value. Conversely, for each of the relaxed problems, let $(\btc, \bty, \tilde{\epsilon}_1,\dots,\tilde{\epsilon}_Q)$ be a feasible solution. Assuming that $\btc \neq \bzero$, this solution can be rescaled to construct $(\bhc, \bhy, \hat{\epsilon}_1,\dots,\hat{\epsilon}_Q) = \left( \btc/\nnorm{\btc}, \bty/\nnorm{\btc}, \tilde{\epsilon}_1,\dots,\tilde{\epsilon}_Q \right)$, which is a feasible solution to $\GIOr{\dataset}$ with the same objective value. Thus, if the minimum of $\GIOrprel{\dataset}$, $\GIOrnrel{\dataset}$, and $\GIOrzrel{\dataset}$ yields an optimal solution with a non-zero imputed cost vector, the two problems share the same optimal solution. \qedwhite
\end{proof}

The key difference between Proposition~\ref{propn:gior_sol} and Corollary~\ref{cor:gior_sol_nonzero} is the non-zero assumption (i.e., $\bc^* \neq \bzero$). By relaxing the normalization constraint, we permit potential solutions for which $\bc^* = \bA^\tpose \by^* = \bzero$ is a linearly dependent combination of the rows of $\bA$.
However, if $\bc^* \neq \bzero$ is an optimal solution to the relaxed problem, it is also an optimal solution to $\GIOr{\dataset}$. Therefore, to solve $\GIOr{\dataset}$, we suggest first solving the three relaxed problems, which are LPs,
from Corollary~\ref{cor:gior_sol_nonzero}. If $\bc^* = \bzero$, then we use the more general approach. Section~\ref{sec:imrt} (with details on the formulations in~\ref{sec:ec_imrt_gio}) shows a case where the LP relaxations via Corollary~\ref{cor:gior_sol_nonzero} are sufficient.

\section{Related work in inverse convex optimization}
\label{sec:ec_inverse_convex}

Multi-point inverse optimization has recently received significant interest under the setting of convex forward problems, with several notable inverse optimization models having been proposed for arbitrary convex forward problems (i.e.,~\citet{ref:bertsimas_mp15, ref:aswani_arxiv15, ref:esfahani_oo15}). The methods proposed in this prior work specialize to linear forward problems and overlap in formulation with the absolute duality and the decision space models proposed in this paper. However, the geometric nature of LPs poses new challenges, but also allows for some efficient solutions, that are not present in the strictly convex domain. In this section, we highlight the previous formulations and discuss several differences in the solution methods.

The inverse convex models in prior work assume that the data set consists of points corresponding to different forward problem instances. As we focus on inverse optimization for a fixed forward feasible region, we illustrate the results in the previous work by fixing $\feas$.

\subsection{Inverse variational inequality}
\label{sec:ec_inverse_vi}

Let $f(\bx; \bc): \field{R}^n \rightarrow \field{R}$ be a convex function in $\bx$ parametrized by $\bc$ and $\set{K}$ be a convex cone.~\citet{ref:bertsimas_mp15} considered the forward problem $\min_\bx \left\{ f(\bx; \bc) \mid \bA \bx = \bb, \bx \in \set{K} \right\}$ and proposed an inverse optimization model that minimized the residuals from failing to satisfy the variational inequality of the first-order optimality condition. The inverse variational inequality problem is
\begin{align}
\begin{split}
    \minimize{\bc, \by_1,\dots,\by_Q, \epsilon_1,\dots,\epsilon_Q}\quad  & \sum_{q=1}^Q | \epsilon_q | \\
    \subjectto \quad    & \bA^\tpose \by_q \leq_{\set{K}} \nabla f(\bhx_q; \bc), \quad \forall q \in \set{Q} \\
                        & \nabla f(\bhx_q; \bc)^\tpose \bhx_q - \bb^\tpose \by_q \leq \epsilon_q, \quad \forall q \in \set{Q} \\
                        & \bc \in \set{C}.
\end{split} \label{eq:inverse_variational}
\end{align}
Setting $\set{K} = \field{R}^n_+$, $f(\bx; \bc) = \bc^\tpose \bx$, and $\set{C} = \left\{ \bc \in \field{R}^n \mid \nnorm{\bc} = 1 \right\}$ makes formulation~\eqref{eq:inverse_variational} equivalent to $\GIOa{\dataset}$, i.e., formulation~\eqref{eq:gioa_reform}.

In the original work,~\citet{ref:bertsimas_mp15} focused mostly on strictly convex forward problems and on ensuring a convex inverse optimization formulation. While the non-convex normalization constraint is not always necessary when the forward problem is strictly convex, setting $f(\bx; \bc) = \bc^\tpose \bx$ implies that $(\bc, \by, \epsilon_1,\dots,\epsilon_Q) = (\bzero, \bzero, 0,\dots,0)$ is a trivially optimal solution~\citep{ref:chan_gof_16, ref:esfahani_oo15}. Note furthermore that convex normalization constraints exist in the literature, e.g.,~\citet{ref:keshavarz_isic11} proposed setting $c_0 = 1$. However, these convex normalization constraints often bias the parameter space. For instance, setting $c_0 = 1$ prevents imputing non-trivial cost vectors where $c_0 = 0$. We enforce the non-convex constraint within all of the inverse optimization models in the current paper and propose polyhedral decomposition-based solution methods in the general setting for $\GIOa{\dataset}$. Furthermore, we find it important to explore special cases where the non-convexity can be bypassed, leading to simpler, sometimes analytic results (see Proposition~\ref{propn:gioa_optimal} and~\ref{propn:gioa_infeas_sol}, as well as Corollary~\ref{cor:gioa_infeas_sol_mp}).

Finally,~\citet{ref:bertsimas_mp15} discussed a decision space alternative to formulation~\eqref{eq:inverse_variational} where instead of minimizing the variational inequality residual, they minimized $\norm{\bhx_q - \bx_q}$, where $\bx_q$ is a variable that satisfies $f(\bx_q;\bc) = \bb^\tpose \by$. Furthermore, they assumed that the gradient of the objective function is strongly monotone, i.e., there exists $\gamma > 0$ such that
\begin{align*}
    {(\nabla f(\bx;\bc) - \nabla f(\by;\bc))}^\tpose(\bx - \by) \geq \gamma \norm{\bx - \by}_2, \quad \forall \bx, \by \in \feas.
\end{align*}
By focusing on the variational inequality nature of objective space inverse optimization,~\citet[Theorem 1]{ref:bertsimas_mp15} translated the variational inequality error bound of~\citet{ref:pang_1987} to show that if there exists an solution $(\bc^*, \by^*, \epsilon^*_1,\dots,\epsilon^*_Q)$ to formulation~\eqref{eq:inverse_variational}, then there exists $\bx^*_1,\dots,\bx^*_Q$ that are optimal solutions to the forward problem and satisfy $\norm{\bhx_q  - \bx^*_q}_2 \leq \sqrt{\epsilon_q/\gamma}$ for all $q$. That is, given the feasible solution to an objective space inverse optimization problem, we can obtain a corresponding feasible solution to a decision space problem where the error is bounded. Note, however, that in the linear case, $\nabla f(\bx;\bc) = \bc$ does not satisfy the strong monotone property, i.e., $\gamma = 0$. As a result, the previous bound does not hold for inverse linear optimization.

\subsection{Inverse empirical risk minimization}
\label{sec:ec_inverse_saa}

Let $f(\bx; \bu, \bc): \field{R}^n \rightarrow \field{R}$ and $g(\bx; \bu, \bc): \field{R}^n \rightarrow \field{R}^m$ be convex functions in $\bx$ that are both parametrized by $\bu$ and $\bc$.~\citet{ref:aswani_arxiv15} considered the general convex forward problem $\min_\bx \left\{ f(\bx; \bu, \bc) \mid g(\bx; \bu, \bc) \leq \bzero \right\}$ and proposed a bilevel inverse optimization model that minimized the empirical distance between a data set $\dataset = \left\{ (\bhx_1,\bhu_1),\dots, (\bhx_Q, \bhu_Q)\right\}$ of $Q$ points sampled i.i.d\@. from some joint probability distribution $\field{P}_{\bx,\bu}$ and the optimal solution sets. The corresponding inverse risk minimization problem is
\begin{align}
\begin{split}
    \minimize{\bc, \bepsilon_1,\dots,\bepsilon_Q}\quad  & \sum_{q=1}^Q \norm{\bepsilon_q}_p \\
    \subjectto \quad    & \bhx_q - \bepsilon_q \in \argmin_{\bx} \left\{ f(\bx; \bhu_q, \bc) \mid g(\bx; \bhu_q, \bc) \leq \bzero \right\}, \quad \forall q \in \set{Q} \\
                        & \bc \in \set{C}.
\end{split} \label{eq:inverse_saa}
\end{align}
Setting $f(\bx; \bu, \bc) = \bc^\tpose \bx$, $g(\bx; \bu, \bc) = \bb - \bA \bx$, and $\set{C} = \left\{ \bc \in \field{R}^n \mid \nnorm{\bc} = 1 \right\}$ specializes formulation~\eqref{eq:inverse_saa} to an equivalent form as $\GIOd{\dataset}$.

Formulation~\eqref{eq:inverse_variational} satisfies statistical consistency (i.e., given sufficient points, the imputed $\bc$ converges to a true data-generating $\bc$) under several assumptions on the data set and the forward model~\citep{ref:aswani_arxiv15}:
\begin{enumerate}
    \item \textbf{Assumption 2:} The parameter space $\set{C}$ is convex.
    
    \item \textbf{Regularity 1:} The feasible set $\feas$ is closed and bounded.
    
    \item \textbf{Identifiability condition:} There exists a unique $\bc^*$ such that:
    \begin{enumerate}
        \item The data set corresponds to noisy perturbations of optimal solutions, i.e., $\bhx_q = \bx^*_q + \bw_q$, where $\bx^*_q \in \argmin_{\bx}\left\{ f(\bx; \bu, \bc) \mid g(\bx; \bu, \bc) \leq \bzero \right\}$, and $\bw_q$ is a random variable with mean $0$ and finite variance.
        
        \item For any $\bc \neq \bc^*$, there exists $\set{U}_\bc$ such that the marginal distribution $\field{P}_{\bu}(\bu \in \set{U}_\bc) > 0$ and the optimal value for
        \begin{align*}
            \inf_{\bx, \bx^*} \quad & \norm{\bx - \bx^*} \\
            \st \quad               & \bx \in \argmin_{\bw}\left\{ f(\bw; \bu, \bc) \mid g(\bw;\bu, \bc) \leq \bzero \right\} \\
                                    & \bx^* \in \argmin_{\bw}\left\{ f(\bw; \bu, \bc^*) \mid g(\bw;\bu, \bc^*) \leq \bzero \right\}  
        \end{align*}
        is equal to $0$ for all $\bu \in \set{U}_\bc$.
        %
        
        \item For all $\bc$, 
        \begin{align*}
            \field{P}_{\bu}\left(\left\{ \bu \;\bigg|\; \left|\argmin_{\bx}\left\{ f(\bx; \bu, \bc) \mid g(\bx; \bu, \bc) \leq \bzero \right\}\right| > 1 \right\}\right) = 0
        \end{align*} 
    \end{enumerate}
\end{enumerate}
These assumptions do not hold in this work where we focus on a fixed linear forward problem for all data points. Particularly, setting $f(\bx; \bu, \bc) = \bc^\tpose \bx$ and $g(\bx; \bu, \bc) = \bb - \bA \bx$ implies that the forward and inverse optimization problem do not depend on $\bu$. Consequently, the second Identifiability condition does not hold in many settings. A trivial example is $\feas = \{ (x_1, x_2) \;|\; 0 \leq x_1, x_2 \leq 1 \}$. Here, for any cost vector $\bc^*$, there exists another cost vector $\bc_i = \ba_i/\nnorm{\ba_i}$ such that the facet described by $\bc_i$ contains an optimal vertex of $\FO{\bc^*}$. Furthermore, the third condition is also trivially violated when $\bc  = \ba_i$ for any $i \in \set{I}$. Finally, our application in Section~\ref{sec:imrt} is an example where the dataset does not correspond to noisy perturbations, but is obtained via several prediction models; we therefore cannot guarantee a well-behaved $\bw_q$. We also remark that our problem setting permits the feasible set $\feas$ to be unbounded. A last consequence of $\bu$ not existing in our setting is that the parameter space becomes non-convex due to the norm constraint. Overall, we find our problem setting to be incompatible with the statistical consistency guarantees in~\citet{ref:aswani_arxiv15}.

\citet{ref:aswani_arxiv15} propose an efficient semi-parametric algorithm to solve formulation~\eqref{eq:inverse_saa} under the assumption that the forward problem is strictly convex in $\bx$. For when $f(\bx;\bu,\bc)$ is linear however,~\citet{ref:aswani_arxiv15} introduce an enumerative algorithm for solving formulation~\eqref{eq:inverse_saa} that relies on quantizing the set $\set{C}$ to a finite set $\hat{\set{C}}$, and solving the corresponding formulation with fixed $\bc \in \hat{\set{C}}$. This algorithm is effective primarily because, for fixed $\bc$, formulation~\eqref{eq:inverse_saa} (and incidentally, $\GIOd{\dataset}$) are convex. However, the authors state that due to the enumerative nature, the algorithm is generally only applicable when the parameter space is modest (e.g., $n\leq5$ is recommended). We find that the algorithm of~\citet{ref:aswani_arxiv15} is complementary to ours. That is, their algorithm is inefficient for large $n$, while our decision space algorithm is relatively insensitive to the increase in $n$, but is inefficient for large $m$.

\subsection{Distributionally robust inverse optimization}
\label{sec:ec_inverse_rm}

\citet{ref:esfahani_oo15} study distributionally robust generalized inverse optimization for convex forward problems. Let $\varrho(\cdot)$ denote a risk measure such as the Value-at-Risk (VaR) or Conditional Value-at-Risk (CVaR). The \emph{non-robust} version of their formulation is
\begin{align}
    \begin{split}
    \minimize{\bc,\bepsilon_1,\dots,\bepsilon_Q}\quad   &\varrho(\norm{\bepsilon_1},\dots,\norm{\bepsilon_Q}) \\
    \subjectto \quad
    & \text{Constraints in~\eqref{eq:inverse_variational} or~\eqref{eq:inverse_saa}} 
    \end{split}\label{eq:inverse_risk}
\end{align}
%
\citet{ref:esfahani_oo15} consider several different variants of inverse convex optimization to encapsulate previous methods; the variants are referred to as predictability (i.e., inverse risk minimization), sub-optimality, first-order (i.e., inverse variational inequality), and bounded rationality. When the forward problem is an LP, the sub-optimality loss model is in fact equivalent to the first-order loss model, and therefore also equivalent to $\GIOa{\dataset}$ proposed here.

A consequence of the general formulation~\eqref{eq:inverse_risk} is that it leads to a new dominance relationship to bound the optimal values between predictability and sub-optimality losses. Similar to~\citet{ref:bertsimas_mp15},~\citet{ref:esfahani_oo15} define the parameter $\gamma \geq 0$ to be the largest parameter satisfying
\begin{align*}
    f(\bx;\bu,\bc) -  f(\by;\bu,\bc) \geq \nabla f(\bx;\bu,\bc)^\tpose (\bx - \by) + \frac{\gamma}{2} \norm{\bx - \by}_2^2, \quad \forall \bx,\by \in \feas, \bu \in \set{U}.
\end{align*}
Under this definition,~\citet{ref:esfahani_oo15} show that their sub-optimality (i.e., objective space) loss upper bounds their predictability (i.e., decision space) loss by a multiplicative factor $\gamma/2$. However, similar to the scenario in the previous bound, $\gamma = 0$ when $f(\bx;\bu,\bc) = \bc^\tpose \bx$. Consequently, this bound also does not hold for LP forward problems. 

\citet{ref:esfahani_oo15} focus on solving a distributionally robust version of formulation~\eqref{eq:inverse_risk}, where the robustness is over the worst-case distribution of data. As they primarily address the sub-optimality loss model, which specializes to the absolute duality gap model in this work, the comparison between their solution methods and ours is similar to the comparison between~\citet{ref:bertsimas_mp15} and ours. That is, we focus on developing efficient algorithms based on LP geometry, and as a consequence, yield several new efficiencies in the absolute duality gap setting.

\section{Automated radiation therapy treatment planning}
\label{sec:ec_imrt_gio}

IMRT treatment 
is delivered by a linear accelerator (LINAC) that 
delivers 
high-energy X-rays from different angles to a patient
's tumor. The patient's body is discretized into tiny voxels in order to calculate the dose delivered to each voxel. The design of an IMRT treatment plan is typically done by mathematical optimization where the decision variable $\bx = (\bw, \bd)$ is composed of two components that represent the beamlets and the dose delivered  (in Gy) as a result of the intensities of the beamlets, respectively.

The forward model in our experiments is a modified version of the one used by~\citet{ref:babier_2018a}. Let $\set{B}$ denote the index set of beamlets and $w_b$ be the radiation intensity of beamlet $b \in \set{B}$. Similarly, let $\set{V}$ denote the index set of voxels within a patient and $d_v$ be the dose of radiation delivered to voxel $v \in \set{V}$. Dose is calculated via a weighted linear combination of all beamlet intensities, i.e., $d_v = \sum_{b \in \set{B}} D_{v,b} w_b$, where $D_{v,b}$ is the dose influence of beamlet $b$ on voxel $v$. 

For each patient, let $\set{T}$ denote the index set of the three 
planning target volumes (PTVs) with different prescription doses (i.e., PTV56, PTV63, and PTV70 with 56 Gy, 63 Gy, and 70Gy as prescription doses, respectively) and let $\set{O}$ denote the index set of the eight surrounding OARs (i.e., brain stem, spinal cord, right parotid, left parotid, larynx, esophagus, mandible, and limPostNeck). Note that the limPostNeck is an artificially defined region used solely in optimization; it does not possess a clinical criteria. For each $t \in \set{T}$ and $o \in \set{O}$, let $\set{V}_t$ and $\set{V}_o$ denote the set of voxels corresponding to the given target or OARs, respectively. 


\subsection{Forward objectives}
\label{sec:ec_imrt_gio_fo}
The IMRT forward problem includes 65 different objectives each minimizing some feature of the dose delivered to an OAR or PTV\@. For each OAR, we minimize the mean dose delivered, the maximum dose delivered, and the average dose above a threshold $\phi^\theta_o$. Here, $\phi^\theta_o$ is a fraction $\theta$ of the average maximum dose to OAR $o$ over the data set of predictions; we consider 
$\theta \in \Theta := \left\{ 0.25, 0.5, 0.75, 0.9, 0.975 \right\}$. Such objectives for each OAR can be computed as follows:
%
%
\begin{align}
        z^{\mathrm{mean}}_o &= \frac{1}{| \set{V}_o |} \sum_{v \in \set{V}_o} d_v, \quad \forall o \in \set{O} \label{eq:imrt_obj1} \\
        z^{\mathrm{max}}_o  &= \max_{v \in \set{V}_o} \left\{ d_v \right\}, \quad \forall o \in \set{O} \label{eq:imrt_obj2} \\
        z^{\mathrm{thresh}, \theta}_o &= \frac{1}{| \set{V}_o |} \sum_{v \in \set{V}_o} \max \left\{ 0, d_v - \phi^\theta_o \right\}, \quad \forall \theta \in \Theta, \forall o \in \set{O}. \label{eq:imrt_obj3}
\end{align}
Each PTV is assigned a prescribed dose $\phi_t$, i.e., 56 Gy for PTV56, 63 Gy for PTV63, and 70 Gy for PTV70. For each PTV, we minimize the dose over the prescription, under the prescription, and the maximum dose delivered to the target, which can be computed as  follows:
\begin{align}
        z^{\mathrm{over}}_t     &= \frac{1}{| \set{V}_t |} \sum_{v \in \set{V}_t} \max \left\{ 0, d_v - \phi_t \right\}, \quad \forall t \in \set{T} \label{eq:imrt_obj4} \\
        z^{\mathrm{under}}_t    &= \frac{1}{| \set{V}_t |} \sum_{v \in \set{V}_t} \max \left\{ 0, \phi_t - d_v \right\}, \quad \forall t \in \set{T} \label{eq:imrt_obj5} \\
        z^{\mathrm{max}}_t      &= \max_{v \in \set{V}_t} \left\{ d_v \right\}, \quad \forall t \in \set{T}. \label{eq:imrt_obj6}
\end{align}

\subsection{Forward constraints}

In order to ensure that no OAR or PTV is prioritized by the objectives at a cost to the other organs, we assign a set of 
hard 
constraints 
for each structure. Every OAR is assigned a constraint to ensure that the mean dose and maximum dose do not exceed baseline safety limits. Similarly, every PTV is assigned a constraint to ensure that it receives a baseline dose on average.  

\comm{
The safety constraints are relaxations of the clinical criteria used to evaluate plans. Recall that clinical plans typically do not satisfy all of the clinical criteria. In fact, satisfying all of the criteria is infeasible for most patients. Consequently, we set these safety constraints so that all plans can satisfy at least these baseline doses for each of the OARs and PTVs; we then use the objectives to push the doses to achieving the clinical criteria. 
}
The baseline values, i.e., right-hand-side of the constraints, are obtained from the average and maximum dose delivered by the 130 clinical plans in our training set. We list the constraints below:
\begin{align}
        \text{Brain stem:}\qquad   & z^{\mathrm{mean}}_o \leq 30, \quad z^{\mathrm{max}}_o \leq 53  \\
        \text{Spinal cord:}\qquad  & z^{\mathrm{mean}}_o \leq 30, \quad z^{\mathrm{max}}_o \leq 46  \\
        \text{Left parotid:}\qquad & z^{\mathrm{mean}}_o \leq 68, \quad z^{\mathrm{max}}_o \leq 77  \\
        \text{Right parotid:}\qquad& z^{\mathrm{mean}}_o \leq 68, \quad z^{\mathrm{max}}_o \leq 78  \\
        \text{Larynx:}\qquad       & z^{\mathrm{mean}}_o \leq 68, \quad z^{\mathrm{max}}_o \leq 77  \\
        \text{Esophagus:}\qquad    & z^{\mathrm{mean}}_o \leq 52, \quad z^{\mathrm{max}}_o \leq 75  \\
        \text{Mandible:}\qquad     & z^{\mathrm{mean}}_o \leq 63, \quad z^{\mathrm{max}}_o \leq 76  \\
        \text{limPostNeck:}\qquad  & z^{\mathrm{mean}}_o \leq 21, \quad z^{\mathrm{max}}_o \leq 46  \\
        \text{PTV56:}\qquad        & z^{\mathrm{mean}}_t \geq 58  \\
        \text{PTV63:}\qquad        & z^{\mathrm{mean}}_t \geq 63  \\
        \text{PTV70:}\qquad        & z^{\mathrm{mean}}_t \geq 69
\end{align}
Note that we introduce a $z^{\mathrm{mean}}_t$ variable for the targets, analogous to  $z^{\mathrm{mean}}_o$ in~\eqref{eq:imrt_obj1}.

Finally, we include a constraint on the ``complexity'' or physical deliverability of the treatment plan. This constraint, known as the sum-of-positive-gradients (SPG), restricts the variation 
of radiation doses from neighboring beamlets so that the resulting dose shape is deliverable by the LINAC\@~\citep{ref:craft_2007}. Let $a \in \set{A}$ index each angle of the LINAC, $r \in \set{R}_a$ index each row of the LINAC at that angle, and $\set{B}_r$ be the index set of beamlets along that row. 
Then, we add the following constraint to restrict the variation of doses to be delivered from different beamlets:
%
\begin{align}
        \sum_{a \in \set{A}} \max_{r \in \set{R}_a} \left\{ \sum_{b \in \set{B}_r} \max\left\{ 0, w_b - w_{b+1} \right\} \right\} \leq 55, \label{eq:imrt_spg}
\end{align}
where we set $w_{b+1} = 0$ for 
the last beamlet in each row. The right-hand-side, i.e., the SPG, is set to 55 Gy, following the convention from previous literature~\citep{ref:babier_2018med}.

\subsection{Forward optimization problem}

The final forward problem is then to minimize a weighted combination of the objectives:
\begin{align}
        \begin{split}
                \FORT{\balpha}:\quad\minimize{\bz,\bw,\bd} \quad   & \sum_{o \in \set{O}} \left( \alpha^{\mathrm{mean}}_o z^{\mathrm{mean}}_o + \alpha^{\mathrm{max}}_o z^{\mathrm{max}}_o + \sum_{\theta \in \Theta} \alpha^{\mathrm{thresh}, \theta}_o z^{\mathrm{thresh}, \theta}_o \right) + \\
                & \qquad \sum_{t \in \set{T}} \left( \alpha^{\mathrm{over}}_t z^{\mathrm{over}}_t + \alpha^{\mathrm{under}}_t z^{\mathrm{under}}_t + \alpha^{\mathrm{max}}_t z^{\mathrm{max}}_t \right) \\
                \subjectto \quad    & \eqref{eq:imrt_obj1}-\eqref{eq:imrt_spg} \\
                & \sum_{b \in \set{B}} D_{v,b} w_b = d_v, \quad \forall v \in \set{V} \\
                & w_b, d_v \geq 0, \quad\forall b \in \set{B}, \forall v \in \set{V}.
        \end{split} \label{eq:imrt_fo}
\end{align}
We compress the notation of the above forward problem to $\FO{\balpha}: \min_{\bx} \left\{ \balpha^\tpose \bC \bx \mid \bA \bx \geq \bb, \bx \geq \bzero \right\}$. This problem has several useful properties. Firstly under this notation, the matrix of objective functions $\bC$ is non-negative. Furthermore, the constraint vector $\bb$ is also non-negative. These properties are useful specifically as they allow for constructing almost entirely linear inverse optimization problems. We discuss these in Section~\ref{sec:imrt_io_problems}.

\subsection{Generating a data set of predicted treatments}

We use the training set of $130$ patients to implement several machine learning models from the KBP literature. 
\comm{
Each model is trained via supervised learning using a data set of paired patient CT images (i.e., features) and clinically delivered dose distributions (i.e., target). There are some variations in how each model approaches the task. We use the same training techniques for each model as described in their original papers and summarize the results below:
%
\begin{enumerate}
        \item \textbf{Random Forest:} A random forest that uses $10$ hand-crafted geometric features derived from the CT images (e.g., distance to nearest tumor structure) to predict the dose for each voxel $\hat{d}_v$ of the patient individually~\citep{ref:mcintosh_2017, ref:mahmood_2018gancer}. We apply the random forest to predict each voxel for a given patient independently and concatenate the predictions to construct a dose distribution.

        \item \textbf{2-D RGB GAN:} A generative adversarial network that uses 2-D axial slices of the patient's CT as an RGB image to predict corresponding 2-D axial slices of the patient's dose also as an RGB image~\citep{ref:mahmood_2018gancer}. We convert the images to grayscale and run 2-D RGB GAN over all 128 axial slices of the patient and concatenate the predictiosn to produce a 3-D dose distribution.

        \item \textbf{2-D GANCER:} A generative adversarial network that uses 2-D axial slices of the patient's CT as an RGB image to predict 2-D axial slices of the patient's dose in grayscale directly~\citep{ref:babier_2018med}. This model is a variant of the 2-D RGB GAN. We run this model over all 128 axial slices of a patient and concatenate the predictions to produce a 3-D dose distributions.
        
        \item \textbf{3-D GANCER:} A generative adversarial network that uses the full 3-D patient's CT image as input to predict the full 3-D dose distribution $\bhd$ in one shot~\citep{ref:babier_2018med}.

\end{enumerate}
}
\begin{table}[t]
    \centering
    \caption{
        \comm{The percentage of predictions that are feasible with respect to their forward problems.}
    }
    \begin{tabular}{ c c }
    \toprule
    Predictive model    & \%-age of feasible predictions \\ \midrule
    3-D GANCER      & 95.3\\
    2-D RGB GAN     & 90.1\\
    2-D GANCER      & 82.3\\
    2-D RGB GAN-sc. & 83.9\\
    RF-sc.          & 82.3\\
    RF              & 86.2\\
    2-D GANCER-sc.  & 87.7\\
    3-D GANCER-sc.  & 86.9\\ 
    \bottomrule
    \end{tabular}
    \label{tab:fraction_feas}
\end{table}

\citet{ref:babier_2018med} noted that plans predicted using the above models often sought to deliver low dose (i.e., significantly spare healthy tissue) at the cost of not satisfying the prescription criteria for the PTVs
, and implemented a rescaling method to create a modified prediction to address this issue. In their experiments, they showed that treatment plans constructed using inverse optimization-based KBP and the normalized dose distributions would better satisfy the prescription criteria while performing slightly poorer on sparing healthy tissue. Consequently, we implement the rescaling method on all predictions from  the models, and use both the non-scaled and scaled predictions as input for the inverse optimization model. Thus, for each patient 
there is a data set of $8$ dose distributions, i.e., $\dataset = \left\{ \bhz_1,\dots,\bhz_8 \right\}$. Note that we do not require $\bhx_q = (\bhw_q, \bhd_q)$, but only the objective function values. Inverse optimization using this data set then yields a weight vector $\balpha_k$, with which we solve $\FO{\balpha_k}$ to obtain a reconstructed personalized treatment.

\comm{
Dose predictions may be feasible and sub-optimal or infeasible. Recall from Proposition~\ref{propn:gior_optimal} and~\ref{propn:gior_optimal} that if all decisions in the data set, then solving the ensemble absolute or relative duality gap inverse optimization is equivalent to solving a single-point model using the centroid. Table~\ref{tab:fraction_feas} highlights the percentage of the patients for which the predictions are feasible dose distributions with respect to $\FORT{\balpha}$. Typically we observe that about 85\% of predictions are feasible, suggesting that there is usually at least one prediction for every patient which is infeasible.
}

\subsection{Inverse optimization problems}
\label{sec:imrt_io_problems}

In order to frame $\FO{\balpha}$ for generalized inverse optimization, we restrict imputed cost vectors to be in the image of $\bC$, i.e., $\set{C} = \left\{ \bC^\tpose \balpha \mid \balpha \geq \bzero \right\}$. Note that $\balpha \geq \bzero$ is an application-specific constraint, as there is no clinical interpretation for negative objective function weights.

A specific inverse optimization problem is formulated by appropriately selecting the model hyperparameters $(\norm{\cdot}, \set{C}, \set{E}_1,\dots, \set{E}_Q)$ from $\GMIO{\dataset}$. In our experiments, we use the default parameters, except with the custom $\set{C}$ to ensure the objective function is a weighted combination of the different objectives. Moreover, we set $\nnorm{\cdot} = \norm{\cdot}_1$.

\subsubsection{Absolute duality gap.}~
Using Proposition~\ref{propn:gioa_specialization} and our specific choice of $\set{C}$, we formulate an absolute duality gap inverse optimization problem:
\begin{align}
        \begin{split}
                \IORTa{\dataset}:\quad\min_{\balpha, \by, \epsilon_1,\dots,\epsilon_Q}  & \sum_{q=1}^Q | \epsilon_q | \\
                \st \quad   & \bC^\tpose \balpha \geq \bA^\tpose \by, \quad \by \geq \bzero \\
                & \balpha^\tpose \bhz_q = \bb^\tpose \by + \epsilon_q, \quad \forall q \in \set{Q} \\
                & (\bC^\tpose\balpha)^\tpose \bone = 1  \\
                & \balpha \geq \bzero.
        \end{split}\label{eq:gioa_imrt_formulation}
\end{align}
$\IORTa{\dataset}$ is obtained by substituting $\bc = \bC^\tpose \balpha$ into formulation~\eqref{eq:gio}, and noting that $\norm{\bC^\tpose \balpha}_1 = (\bC^\tpose \balpha)^\tpose \bone$ when both $\balpha \geq \bzero$ and $\bC \geq \bzero$.

\subsubsection{Relative duality gap.}~
Using Proposition~\ref{propn:gior_specialization} and our specific choice of $\set{C}$, we formulate a relative duality gap inverse optimization problem. We then use Corollary~\ref{cor:gior_sol_nonzero} to obtain the LP relaxation of the relative duality gap problem. The two relevant formulations are given below.

\begin{minipage}{.48\linewidth}
        \begin{align}
                \begin{split}
                        &\IORTr{\dataset}:\\
                        \min_{\balpha, \by, \epsilon_1,\dots,\epsilon_Q}  & \sum_{q=1}^Q | \epsilon_q - 1 | \\
                        \st \quad   & \bC^\tpose \balpha \geq \bA^\tpose \by, \quad \by \geq \bzero \\
                        & \balpha^\tpose \bhz_q = \epsilon_q\bb^\tpose \by, \quad \forall q \in \set{Q} \\
                        & (\bC^\tpose\balpha)^\tpose \bone = 1 \\
                        & \balpha \geq \bzero. 
                \end{split}\label{eq:gior_imrt_formulation}
        \end{align}
\end{minipage}
\begin{minipage}{.48\linewidth}
        \begin{align}
                \begin{split}
                        &\IORTrlp{\dataset}:\\
                        \min_{\balpha, \by, \epsilon_1,\dots,\epsilon_Q}  & \sum_{q=1}^Q | \epsilon_q - 1 | \\
                        \st \quad   & \bC^\tpose \balpha \geq \bA^\tpose \by, \quad \by \geq \bzero \\
                        & \balpha^\tpose \bhz_q = \epsilon_q,  \quad \forall q \in \set{Q} \\
                        & \bb^\tpose \by = 1 \\
                        & \balpha \geq \bzero.
                \end{split}\label{eq:gior_imrt_formulation2}
        \end{align}
\end{minipage}

Using Algorithm~\ref{alg:gior}, we first solve the LP relaxation of $\IORTr{\dataset}$, stated above as $\IORTrlp{\dataset}$. Note that this relaxation is the application-specific analogue of $\GIOrprel{\dataset}$, which is only one of the three reformulations of the relative duality gap problem. We do not construct or solve relaxations of the other two (e.g., $\GIOrnrel{\dataset}$ and $\GIOrzrel{\dataset}$) due to the following reasons. First, the analogue to $\GIOrnrel{\dataset}$ is infeasible; in our application, $\bb \geq \bzero$ implying $\bb^\tpose \by \geq \bzero$ for all $\by \geq \bzero$. Second, the application-specific analogue of $\GIOrz{\dataset}$ in practice is often infeasible or generates plans that perform poorly on the clinical criteria satisfaction metrics compared to $\IORTrlp{\dataset}$. Recall that $\GIOrz{\dataset}$ requires $\bc^\tpose \bhx_q = 0$ for all $q \in \set{Q}$. In the application-specific analogue (where the constraint is $\balpha^\tpose \bhz_q = 0$), both $\balpha \geq 0$ and $\bhz_q \geq 0$, which means that the problem is feasible only when there exists an element of $\bhz_q$ that is equal to $0$ for all of the predictions. The only objectives where this situation could occur are the threshold objectives~\eqref{eq:imrt_obj3}--\eqref{eq:imrt_obj5}. Thus, the application-specific analogue of $\GIOrz{\dataset}$ is either infeasible or distributes all of the objective weights to these three objectives. By strictly focusing on the threshold objectives however, the inverse problem then generally fails to meet a large number of the clinical criteria. Consequently, we advocate in this application to strictly use $\IORTrlp\dataset$ to solve the relative duality gap inverse optimization problem.

\begin{algorithm}[t]
    \caption{
        \comm{Multiplicative Weights Algorithm Baseline}
    }
    \label{alg:mwa}
    \begin{algorithmic}[1]
        \renewcommand{\algorithmicrequire}{\textbf{Input:}}
        \renewcommand{\algorithmicensure}{\textbf{Output:}}
        \comm{
        \REQUIRE Data set of CT images for training patients $\set{C}$, Data set of CT images for testing patients $\tilde{\set{C}}$, Dose prediction models $F_1(\cdot), \dots, F_Q(\cdot)$, Learning rate $\eta \leq 0.5$.
        \ENSURE  Treatment plans for each patient
        \STATE Initialize weights $w_q = 1$ for $q \in \set{Q}$.
        \FOR{Each patient in the training data set $\bhc_k \in \tilde{\set{C}}$}
            \FOR{$q \in \set{Q}$}
                \STATE Let $\hat{\bd}_{q, k} \gets F_q(\bhc_k)$.
                \STATE Let $z_{q, k} \gets \IORTr{\{ \hat{\bd}_{q, k} \}}$.
                \STATE Let $w_q \gets w_q ( 1 - \eta z_{q, k} )$.
            \ENDFOR
        \ENDFOR
        \STATE Normalize weights $w_q \gets w_q / (\sum_{q'=1}^Q w_{q'})$.
        \FOR{Each patient in the testing data set $\bhc_k \in \set{C}$}
            \STATE Select prediction model $F_q(\cdot)$ with probability $w_q$.
            \STATE Let $\hat{\bd}_{q, k} \gets F_q(\bhc_k)$.
            \STATE Let $\balpha^*_k$ be the optimal solution to $\IORTr{\{ \hat{\bd}_{q, k} \}}$.
            \STATE Let $\bx^*_k \gets \FORT{\balpha^*_k}$ and evaluate the corresponding treatment plan.
        \ENDFOR
        }
    \end{algorithmic}
\end{algorithm}

\comm{
\subsection{Baseline implementations}
\label{sec:ec_baselines}

In Section~\ref{sec:imrt_comparison_vs_baselines}, we implement two conventional ensemble learning baselines to compare with ensemble inverse optimization. The first baseline is an ensemble-then-inverse optimization model. Here, we first compute the average of the individual decisions and then solve a single-point inverse optimization problem to obtain a cost vector. The second baseline is a Multiplicative Weights Algorithm (MWA). In our experiments, we implemented both models using all eight predictions as well as for the 4 Pts\@. predictions (RF-sc., RF, 2-D GANCER-sc., 3-D GANCER-sc.). We also use grid search with the training set patients to identify the best learning rate for the MWA.

Algorithm~\ref{alg:mwa} summarizes the steps for the MWA. We use an offline learning variant of the Weighted Majority update rule of~\citet{ref:arora_2012multiplicative}.
Each of the prediction models $F_q(\cdot)$ in the ensemble KBP pipeline is treated as an expert and we initialize a weight $w_q =1$ for each model. For each of the 130 training set patients $k$ and each prediction model, we predict a dose $\hat{d}_{q, k}$, solve a single-point inverse optimization problem and update the weight $w_q$ by a penalty factor corresponding to the aggregate error of the inverse optimization problem. After repeating this process for all of the training set patients, we normalize the weights to a probability distribution and freeze them. Then for each of the patients in the test set, we randomly select an `expert' KBP pipeline to generate a treatment plan. 
}

\end{document}